\documentclass[leqno,12pt]{amsart}
\usepackage{amssymb}
\usepackage{amsmath}
\usepackage{enumerate}
\usepackage{amsfonts}
\usepackage{hyperref}
\usepackage{mathrsfs}
\usepackage{tikz}
\usepackage{mathrsfs}

\usepackage[a4paper, margin=2.5cm]{geometry}

\hypersetup{
    colorlinks=true,
    linkcolor= blue,
    citecolor =cyan,
    urlcolor = teal,
}

\newtheorem{theorem}{Theorem}[section]
\newtheorem{corollary}[theorem]{Corollary}
\newtheorem{lemma}[theorem]{Lemma}
\newtheorem{proposition}[theorem]{Proposition}

\newtheorem{definition}[theorem]{Definition}

\numberwithin{equation}{section}

\newcommand{\supp}{\text{\rm supp}\,}

\begin{document}
\title[Riesz transforms and Hardy spaces
for Dunkl--Schr\"odinger operators]{Riesz transform characterization of Hardy spaces
associated with Dunkl--Schr\"odinger operators}

\author[Agnieszka Hejna-Łyżwa]{Agnieszka Hejna-Łyżwa}

\address{Agnieszka Hejna-Łyżwa, Uniwersytet Wroc\l awski,
Instytut Matematyczny,
Pl. Grunwaldzki 2,
50-384 Wroc\l aw,
Poland}
\email{hejna@math.uni.wroc.pl}

\subjclass[2020]{Primary 42B30; Secondary 42B20, 47D08, 35J10}

\keywords{Dunkl operators, Schr\"odinger operators, Hardy spaces,
Riesz transforms, atomic decomposition, heat kernel estimates}

\begin{abstract}
Let $L=-\Delta_k+V$ be a Schr\"odinger operator associated
with the Dunkl Laplacian and a nonnegative potential.
We characterize the Hardy space $H_L^1$, defined by the
maximal operator of the semigroup generated by $-L$,
in terms of the Riesz transforms associated with $L$.
Under a local integral condition on the potential and
a decay condition on the semigroup, formulated relative
to an admissible covering by cubes, we prove that
\[
    \|f\|_{H_L^1}
    \asymp
    \|f\|_{L^1(dw)}
    +\sum_{j=1}^N
    \|\widetilde{R}_j f\|_{L^1(dw)}.
\]
The proof combines an atomic characterization of $H_L^1$
with local Hardy space theory and weighted integral
estimates for Dunkl derivatives of the heat kernels.
The result applies to arbitrary root systems and
nonnegative multiplicity functions.
\end{abstract}

\maketitle
\section{Introduction}

Let $R$ be a root system in $\mathbb{R}^N$, let $k\geq0$
be a multiplicity function, and let $dw$ denote the
associated measure. We consider the Dunkl--Schr\"odinger
operator
\[
    L=-\Delta_k+V,
    \qquad V\geq0,\quad V\in L^2_{\mathrm{loc}}(dw),
\]
where $\Delta_k=\sum_{j=1}^N T_j^2$ is the Dunkl Laplacian.
The Hardy space $H_L^1$ associated with $L$ is defined
by the maximal function of the semigroup $K_t=e^{-tL}$:
\[
    H_L^1
    =
    \left\{
        f\in L^1(dw):
        \sup_{t>0}|K_tf|\in L^1(dw)
    \right\},
    \qquad
    \|f\|_{H_L^1}
    =
    \left\|\sup_{t>0}|K_tf|\right\|_{L^1(dw)}.
\]

Riesz transform characterizations are a central part
of the real-variable theory of Hardy spaces. The characterization of Hardy spaces by Riesz transforms
is a classical result of Fefferman and Stein~\cite{FS}.
In this paper, we study an analogous characterization
for Hardy spaces associated with Dunkl--Schr\"odinger operators.
For classical Schr\"odinger operators, such
characterizations were studied in
\cite{DZ_Revista,DzP}.
Hardy spaces in the rational Dunkl setting were studied
in~\cite{Anker15} for the reflection group $\mathbb Z_2^N$.
The theory for general root systems, including a characterization
by Riesz transforms, was developed in~\cite{ADzH},
and a local theory, together with a Riesz transform
characterization for the Dunkl harmonic oscillator,
was established in~\cite{Hejna}.
For more general nonnegative potentials,
\cite{main} provides an atomic characterization of
$H_L^1$ under conditions on the potential and an
associated covering of $\mathbb{R}^N$ by cubes. Related Riesz transform characterizations under
assumptions on the heat semigroup were obtained
in~\cite{KaniaPreisner}, with applications to
multidimensional Bessel and Laguerre operators.

The purpose of this paper is to prove a Riesz transform
characterization under the same assumptions.
Writing $k_t(\mathbf{x},\mathbf{y})$ for the kernel
of $K_t$, we define
\[
    \widetilde{R}_jf
    =
    \lim_{\varepsilon\to0^+}
    \int_{\mathbb{R}^N}
    \int_\varepsilon^{1/\varepsilon}
    T_{j,\mathbf{x}}k_t(\mathbf{x},\mathbf{y})
    \,\frac{dt}{\sqrt{t}}\,
    f(\mathbf{y})\,dw(\mathbf{y}),
\]
where the derivative is understood weakly and the
limit is taken in $\mathcal{S}'(\mathbb{R}^N)$.
We prove that this limit exists for every $f\in L^1(dw)$. The $L^p(dw)$-boundedness of the Dunkl Riesz transforms
for $1<p<\infty$ was established by Amri and
Sifi~\cite{AmriSifi}. For Dunkl--Schr\"odinger operators, Amri and
Hammi~\cite{AH_2} proved weak type $(1,1)$ estimates
and $L^p(dw)$-boundedness of the associated Riesz
transforms for $1<p\leq2$.
Our main result, Theorem~\ref{theo:main}, states that
under conditions~\eqref{eq:finite_overlap},
\eqref{eq:D}, and~\eqref{eq:K},
\[
    f\in H_L^1
    \quad\Longleftrightarrow\quad
    f\in L^1(dw)
    \ \text{and}\
    \widetilde{R}_jf\in L^1(dw)
    \quad(j=1,\ldots,N),
\]
with
\[
    \|f\|_{H_L^1}
    \asymp
    \|f\|_{L^1(dw)}
    +\sum_{j=1}^N\|\widetilde{R}_jf\|_{L^1(dw)}.
\]
Here membership of $\widetilde{R}_jf$ in $L^1(dw)$
means that the distribution is represented by an
$L^1(dw)$ function.

The proof follows the localization approach
of~\cite{DzP}, using the atomic characterization
from~\cite{main} and the local Riesz transforms
from~\cite{Hejna}.
The reflection terms in the Dunkl operators require
estimates that retain information about Euclidean
distance, rather than only distance between
reflection-group orbits.
We obtain weighted integral estimates for weak
Dunkl derivatives of the Schr\"odinger kernel
from the perturbation formula and heat-kernel bounds.
These estimates control the local comparison
operators and commutators needed in the proof.

The paper begins with the required preliminaries
and the distributional construction of the Riesz
transforms. We then state the main result, establish
kernel estimates, and recall the local Hardy-space
theory. The final sections contain the auxiliary
localization lemmas and the proof of the main theorem.

\section{Preliminaries}\label{sec:preliminaries}

\subsection{Dunkl operators and the associated measure}

The foundations of Dunkl analysis were developed
in~\cite{Dunkl0,Dunkl,Dunkl2,Dunkl3}.
We recall below the definitions and estimates needed in this paper.
For further background, see~\cite{Roesler3,Roesler-Voit}.
Throughout the paper, $C$ denotes a positive constant
whose value may change from line to line.
Unless otherwise stated, constants depend only on the
Dunkl structure and, when applicable, on the constants
in conditions~\eqref{eq:finite_overlap},
\eqref{eq:D}, and~\eqref{eq:K}.

Let $R\subset\mathbb{R}^N\setminus\{0\}$ be a finite
normalized root system:
\[
    R\cap\mathbb{R}\alpha=\{\alpha,-\alpha\},
    \qquad
    \sigma_\alpha(R)=R,
    \qquad
    \|\alpha\|=\sqrt{2},
    \qquad \alpha\in R,
\]
where
\[
    \sigma_\alpha(\mathbf{x})
    =
    \mathbf{x}
    -2\frac{\langle\mathbf{x},\alpha\rangle}
            {\|\alpha\|^2}\alpha.
\]
Let $G$ be the finite reflection group generated by
$\{\sigma_\alpha:\alpha\in R\}$, and let
$k:R\to[0,\infty)$ be a fixed $G$-invariant
multiplicity function.
The associated measure is
\begin{equation}\label{eq:measure_formula}
    dw(\mathbf{x})
    =
    \prod_{\alpha\in R}
    |\langle\mathbf{x},\alpha\rangle|^{k(\alpha)}
    \,d\mathbf{x},
\end{equation}
and its homogeneous dimension is
\begin{equation}\label{eq:homo}
    \mathbf{N}=N+\sum_{\alpha\in R}k(\alpha).
\end{equation}
We write $B(\mathbf{x},r)$ for the Euclidean closed ball
with center $\mathbf{x}$ and radius $r$.
The measure $dw$ is $G$-invariant and satisfies
\[
    w(B(t\mathbf{x},tr))
    =t^{\mathbf{N}}w(B(\mathbf{x},r)),
    \qquad t,r>0.
\]
Moreover,
\begin{equation}\label{eq:balls_asymp}
    w(B(\mathbf{x},r))
    \asymp
    r^N\prod_{\alpha\in R}
    \bigl(|\langle\mathbf{x},\alpha\rangle|+r\bigr)^{k(\alpha)}.
\end{equation}
In particular, $dw$ is doubling:
\begin{equation}\label{eq:doubling}
    w(B(\mathbf{x},2r))
    \leq Cw(B(\mathbf{x},r)).
\end{equation}
More generally, for $r_2\geq r_1>0$,
\begin{equation}\label{eq:growth}
    C^{-1}\left(\frac{r_2}{r_1}\right)^N
    \leq
    \frac{w(B(\mathbf{x},r_2))}
         {w(B(\mathbf{x},r_1))}
    \leq
    C\left(\frac{r_2}{r_1}\right)^{\mathbf{N}}.
\end{equation}
We shall also use the uniform lower bound
\begin{equation}\label{eq:volume_lower}
    w(B(\mathbf{x},r))\geq c r^{\mathbf{N}},
    \qquad \mathbf{x}\in\mathbb{R}^N,\quad r>0,
\end{equation}
which follows directly from~\eqref{eq:balls_asymp}.

For $\xi\in\mathbb{R}^N$, the Dunkl operator $T_\xi$
is defined by
\begin{equation}\label{eq:T_def}
    T_\xi f(\mathbf{x})
    =
    \partial_\xi f(\mathbf{x})
    +
    \sum_{\alpha\in R}\frac{k(\alpha)}{2}
    \langle\alpha,\xi\rangle
    \frac{f(\mathbf{x})-f(\sigma_\alpha\mathbf{x})}
         {\langle\alpha,\mathbf{x}\rangle}.
\end{equation}
For $C^1$ functions, the difference quotients are
extended continuously across the reflection hyperplanes.
Let $e_1,\ldots,e_N$ be the standard basis of
$\mathbb{R}^N$, and write $T_j=T_{e_j}$.

The Dunkl operators commute and are skew-symmetric
with respect to $dw$. In particular,
\begin{equation}\label{eq:Dunkl_integration_by_parts}
    \int_{\mathbb{R}^N}T_jf\,g\,dw
    =
    -\int_{\mathbb{R}^N}f\,T_jg\,dw,
    \qquad f,g\in C_c^\infty(\mathbb{R}^N).
\end{equation}
They also preserve $\mathcal{S}(\mathbb{R}^N)$.

For $f,g\in L^2(dw)$, we write $T_jf=g$ in the weak
sense if
\[
    \int_{\mathbb{R}^N}g\varphi\,dw
    =
    -\int_{\mathbb{R}^N}fT_j\varphi\,dw,
    \qquad \varphi\in C_c^\infty(\mathbb{R}^N).
\]

We shall use the following elementary estimate.

\begin{lemma}\label{lem:Dunkl_gradient_Linfty}
For every $f\in C^1(\mathbb{R}^N)$ with bounded gradient
and every $j\in\{1,\ldots,N\}$,
\[
    \|T_jf\|_{L^\infty}
    \leq
    \left(
        1+\sum_{\alpha\in R}
        k(\alpha)\frac{|\alpha_j|}{\|\alpha\|}
    \right)
    \|\nabla f\|_{L^\infty}.
\]
\end{lemma}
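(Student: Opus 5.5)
We estimate the two parts of $T_jf$ in \eqref{eq:T_def} separately, taking $\xi=e_j$. The derivative part is immediate: $|\partial_jf(\mathbf{x})|\leq\|\nabla f\|_{L^\infty}$ for every $\mathbf{x}$. The whole argument therefore reduces to bounding each difference quotient
\[
    \frac{f(\mathbf{x})-f(\sigma_\alpha\mathbf{x})}{\langle\alpha,\mathbf{x}\rangle}
\]
by a multiple of $\|\nabla f\|_{L^\infty}$ that does not depend on $\mathbf{x}$.

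Fix $\alpha\in R$ and a point $\mathbf{x}$ with $\langle\alpha,\mathbf{x}\rangle\neq0$. Since $\|\alpha\|^2=2$, the reflection formula gives $\sigma_\alpha\mathbf{x}=\mathbf{x}-\langle\mathbf{x},\alpha\rangle\alpha$. Hence $\mathbf{x}-\sigma_\alpha\mathbf{x}=\langle\mathbf{x},\alpha\rangle\alpha$. By the mean value theorem along the segment from $\sigma_\alpha\mathbf{x}$ to $\mathbf{x}$,
\[
    |f(\mathbf{x})-f(\sigma_\alpha\mathbf{x})|
    \leq\|\nabla f\|_{L^\infty}\,|\langle\mathbf{x},\alpha\rangle|\,\|\alpha\|.
\]
Dividing by $|\langle\alpha,\mathbf{x}\rangle|$ bounds the quotient by $\|\alpha\|\,\|\nabla f\|_{L^\infty}$. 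The bound is uniform in $\mathbf{x}$, so it also holds for the continuous extension of the quotient across the hyperplane $\langle\alpha,\mathbf{x}\rangle=0$.

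Summing over $\alpha\in R$ with the weights $\tfrac{k(\alpha)}{2}|\langle\alpha,e_j\rangle|=\tfrac{k(\alpha)}{2}|\alpha_j|$ gives
\[
    \|T_jf\|_{L^\infty}\leq\Bigl(1+\sum_{\alpha\in R}\frac{k(\alpha)}{2}|\alpha_j|\,\|\alpha\|\Bigr)\|\nabla f\|_{L^\infty}.
\]
Because $\|\alpha\|=\sqrt2$, we have $\tfrac12\|\alpha\|=\tfrac{1}{\sqrt2}=\tfrac{1}{\|\alpha\|}$. Thus each coefficient equals $k(\alpha)|\alpha_j|/\|\alpha\|$, which is exactly the constant in the statement. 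No step is a real obstacle; the only point requiring care is this normalization identity, which is what turns the factor $\tfrac12\|\alpha\|$ into $1/\|\alpha\|$.
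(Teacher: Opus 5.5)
Your proof is correct and is essentially the paper's argument. You bound each difference quotient by $\|\mathbf{x}-\sigma_\alpha\mathbf{x}\|\,\|\nabla f\|_{L^\infty}/|\langle\alpha,\mathbf{x}\rangle|$ and then sum with the weights from \eqref{eq:T_def}; the only difference is that the paper writes $\|\mathbf{x}-\sigma_\alpha\mathbf{x}\|=2|\langle\alpha,\mathbf{x}\rangle|/\|\alpha\|$ directly, which gives the constant $k(\alpha)|\alpha_j|/\|\alpha\|$ without your final normalization step $\tfrac12\|\alpha\|=1/\|\alpha\|$.
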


\begin{proof}
The fundamental theorem of calculus gives
\[
\begin{aligned}
    |f(\mathbf{x})-f(\sigma_\alpha\mathbf{x})|
    &\leq
    \|\mathbf{x}-\sigma_\alpha\mathbf{x}\|
    \|\nabla f\|_{L^\infty}\\
    &=
    \frac{2|\langle\alpha,\mathbf{x}\rangle|}{\|\alpha\|}
    \|\nabla f\|_{L^\infty}.
\end{aligned}
\]
The assertion follows from~\eqref{eq:T_def}.
\end{proof}

\subsection{The Dunkl heat semigroup}

The Dunkl Laplacian is
\[
    \Delta_k=\sum_{j=1}^N T_j^2.
\]
Its realization on $C_c^\infty(\mathbb{R}^N)$ is
essentially self-adjoint on $L^2(dw)$;
see~\cite[Theorem~3.1]{AH}.
We use the same symbol for its self-adjoint closure.
The semigroup $H_t=e^{t\Delta_k}$ has the representation
\[
    H_tf(\mathbf{x})
    =
    \int_{\mathbb{R}^N}
    h_t(\mathbf{x},\mathbf{y})f(\mathbf{y})\,dw(\mathbf{y}).
\]
The Dunkl heat semigroup and its kernel were studied
by R\"osler~\cite{Roesler2}.
The kernel $h_t$ is smooth for $t>0$ and satisfies
\[
    h_t(\mathbf{x},\mathbf{y})
    =h_t(\mathbf{y},\mathbf{x})>0,
\]
as well as
\begin{equation}\label{eq:h_integral_1}
    \int_{\mathbb{R}^N}
    h_t(\mathbf{x},\mathbf{y})\,dw(\mathbf{x})
    =1.
\end{equation}
Consequently, $H_t$ is a positivity-preserving
contraction on $L^p(dw)$ for $1\leq p\leq\infty$.

The Dunkl derivative of the heat kernel is given by
\begin{equation}\label{eq:heat_derivative}
    T_{j,\mathbf{x}}h_t(\mathbf{x},\mathbf{y})
    =
    \frac{y_j-x_j}{2t}h_t(\mathbf{x},\mathbf{y});
\end{equation}
see~\cite[Lemma~3.3]{DzH1}.

Let
\[
    d(\mathbf{x},\mathbf{y})
    =
    \min_{\sigma\in G}\|\mathbf{x}-\sigma\mathbf{y}\|
\]
be the distance between the $G$-orbits of $\mathbf{x}$
and $\mathbf{y}$, and set
\begin{equation}\label{eq:mathcal_G}
    \mathcal{G}_t(\mathbf{x},\mathbf{y})
    =
    \frac{\exp(-d(\mathbf{x},\mathbf{y})^2/t)}
    {\max\{w(B(\mathbf{x},\sqrt{t})),
           w(B(\mathbf{y},\sqrt{t}))\}}.
\end{equation}

The following estimate follows from
\cite[Theorem~4.1]{ADzH}
and~\cite[Theorem~3.1]{DzH1}. For more precise two-sided estimates of the Dunkl
heat kernel when the multiplicity function is
strictly positive, see~\cite{DzH_heat}.

\begin{theorem}\label{theorem:heat}
There are constants $C,c>0$ such that
\begin{equation}\label{eq:Gauss}
    h_t(\mathbf{x},\mathbf{y})
    \leq
    C\left(
        1+\frac{\|\mathbf{x}-\mathbf{y}\|}{\sqrt{t}}
    \right)^{-2}
    \mathcal{G}_{t/c}(\mathbf{x},\mathbf{y})
\end{equation}
for every $t>0$ and
$\mathbf{x},\mathbf{y}\in\mathbb{R}^N$.
\end{theorem}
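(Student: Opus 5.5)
The estimate~\eqref{eq:Gauss} is essentially a restatement of two known results, so the plan is to combine them. The first ingredient is the Gaussian upper bound from \cite[Theorem~4.1]{ADzH}: there are $C,c>0$ such that
\[
    h_t(\mathbf{x},\mathbf{y})
    \leq C\,\frac{\exp(-c\,d(\mathbf{x},\mathbf{y})^2/t)}
    {w(B(\mathbf{x},\sqrt{t}))}.
\]
This has only $w(B(\mathbf{x},\sqrt t))$ in the denominator. Since $h_t$ is symmetric, the same bound holds with $w(B(\mathbf{y},\sqrt t))$. Taking the minimum of the two right-hand sides puts the maximum of the two volumes in the denominator, which gives exactly $C\,\mathcal{G}_{t/c'}(\mathbf{x},\mathbf{y})$ after renaming $c$. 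So without the polynomial factor, the bound follows directly.

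The second ingredient is the extra factor $(1+\|\mathbf{x}-\mathbf{y}\|/\sqrt t)^{-2}$. It records Euclidean distance rather than orbit distance, and it is the content of \cite[Theorem~3.1]{DzH1}. That theorem gives
\[
    h_t(\mathbf{x},\mathbf{y})
    \leq C\Bigl(1+\frac{\|\mathbf{x}-\mathbf{y}\|}{\sqrt t}\Bigr)^{-2}
    \frac{\exp(-c\,d(\mathbf{x},\mathbf{y})^2/t)}{w(B(\mathbf{x},\sqrt t))}.
\]
I will quote this statement and then symmetrize the volume term exactly as in the first step. Both the polynomial factor and the orbit distance $d$ are symmetric in $\mathbf{x},\mathbf{y}$, because $G$ acts by isometries, so symmetrization does not affect them.

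The only real care point is the bookkeeping of the constant inside $\mathcal{G}_{t/c}$. The Gaussian $\exp(-c\,d^2/t)$ equals $\exp(-d^2/(t/c))$. The volume $w(B(\mathbf{x},\sqrt t))$, however, has to be converted to $w(B(\mathbf{x},\sqrt{t/c}))$. For $c\le 1$ this follows from the doubling growth estimate~\eqref{eq:growth}, since the ratio of the two volumes is bounded by $C c^{-\mathbf{N}/2}$; this enlarges only the multiplicative constant $C$.

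If one wanted to reprove the polynomial factor rather than cite it, the main obstacle would be the mechanism in \cite{DzH1}. One writes $h_t$ through the Dunkl kernel and R\"osler's formula, $h_t(\mathbf{x},\mathbf{y})=c_k t^{-\mathbf{N}/2}e^{-(\|\mathbf{x}\|^2+\|\mathbf{y}\|^2)/4t}E_k(\mathbf{x}/\sqrt{2t},\mathbf{y}/\sqrt{2t})$, represents it via the R\"osler measure, and exploits~\eqref{eq:heat_derivative} together with integration along paths to gain decay in $\|\mathbf{x}-\mathbf{y}\|/\sqrt t$. Here, however, I will simply invoke the cited theorems.
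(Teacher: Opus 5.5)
Your proposal is correct and matches the paper, which gives no argument beyond deriving the estimate from \cite[Theorem~4.1]{ADzH} and \cite[Theorem~3.1]{DzH1}. Two steps in your write-up are exactly the routine bookkeeping that this citation leaves implicit: symmetrizing in $\mathbf{x},\mathbf{y}$ to put the maximum of the two volumes in $\mathcal{G}_{t/c}$, and using the doubling bound (after shrinking $c\le 1$) to pass from radius $\sqrt{t}$ to $\sqrt{t/c}$.
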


We shall use the uniform integrability estimate
\begin{equation}\label{eq:G_integral}
    \sup_{\mathbf{y}\in\mathbb{R}^N}
    \int_{\mathbb{R}^N}
    \mathcal{G}_t(\mathbf{x},\mathbf{y})\,dw(\mathbf{x})
    \leq C,
    \qquad t>0.
\end{equation}

\subsection{The Dunkl--Schr\"odinger operator}
\label{sec:Schorodinger}

Throughout the paper, $V\geq0$ is measurable and
$V\in L^2_{\mathrm{loc}}(dw)$.
We define the Dunkl--Schr\"odinger operator through
the quadratic form
\[
    \mathbf{Q}(f,g)
    =
    \sum_{j=1}^N
    \int_{\mathbb{R}^N}T_jf\,\overline{T_jg}\,dw
    +
    \int_{\mathbb{R}^N}Vf\overline{g}\,dw
\]
with domain
\[
    \mathcal{D}(\mathbf{Q})
    =
    \left\{
        f\in L^2(dw):
        T_jf\in L^2(dw)\ \text{for }j=1,\ldots,N,
        \quad V^{1/2}f\in L^2(dw)
    \right\},
\]
where the derivatives are understood in the weak sense.

The form $\mathbf{Q}$ is densely defined, nonnegative,
and closed; see~\cite[Lemma~4.1]{AH}.
Let $L$ be its associated nonnegative self-adjoint
operator. It is the self-adjoint realization of
\begin{equation}\label{eq:Schrodinger_operator}
    L=-\Delta_k+V.
\end{equation}
More precisely, it is the closure of $-\Delta_k+V$
initially defined on $C_c^\infty(\mathbb{R}^N)$;
see~\cite[Theorem~4.6]{AH}.
We have
\[
    \mathcal{D}(L^{1/2})=\mathcal{D}(\mathbf{Q}),
    \qquad
    \mathbf{Q}(f,f)=\|L^{1/2}f\|_{L^2(dw)}^2,
\]
where $L^{1/2}$ denotes the nonnegative self-adjoint
square root of $L$.

The semigroup $K_t=e^{-tL}$ has an integral kernel:
\[
    K_tf(\mathbf{x})
    =
    \int_{\mathbb{R}^N}
    k_t(\mathbf{x},\mathbf{y})f(\mathbf{y})\,dw(\mathbf{y}).
\]
We use a jointly measurable version of $k_t$.
For every $t>0$,
\[
    k_t(\mathbf{x},\mathbf{y})
    =k_t(\mathbf{y},\mathbf{x}),
\]
and
\begin{equation}\label{eq:kernels_compare}
    0\leq k_t(\mathbf{x},\mathbf{y})
    \leq h_t(\mathbf{x},\mathbf{y})
\end{equation}
for $dw\otimes dw$-almost every
$(\mathbf{x},\mathbf{y})$;
see~\cite[Theorem~4.8]{AH}.
In particular,
\begin{equation}\label{eq:kernel_mass}
    \int_{\mathbb{R}^N}
    k_t(\mathbf{x},\mathbf{y})\,dw(\mathbf{x})
    \leq1
\end{equation}
for almost every $\mathbf{y}$.
Thus $K_t$ extends to a positivity-preserving
contraction on $L^p(dw)$, $1\leq p\leq\infty$.

The semigroup property gives
\begin{equation}\label{eq:kernel_semigroup}
    k_{t+s}(\mathbf{x},\mathbf{y})
    =
    \int_{\mathbb{R}^N}
    k_t(\mathbf{x},\mathbf{z})
    k_s(\mathbf{z},\mathbf{y})\,dw(\mathbf{z}),
    \qquad t,s>0,
\end{equation}
for almost every $(\mathbf{x},\mathbf{y})$.

We shall use the following energy estimate.

\begin{lemma}[\cite{main}, Lemma~18]
\label{lem:L_on_k_t}
For every $t>0$ and almost every $\mathbf{y}$,
the function $k_t(\cdot,\mathbf{y})$ belongs to
$\mathcal{D}(L)$, and
\begin{equation}\label{eq:analytic}
\begin{aligned}
    \mathbf{Q}(k_t(\cdot,\mathbf{y}),k_t(\cdot,\mathbf{y}))
    &=
    \langle Lk_t(\cdot,\mathbf{y}),
            k_t(\cdot,\mathbf{y})\rangle_{L^2(dw)}\leq
    \frac{C}{t\,w(B(\mathbf{y},\sqrt{t}))}.
\end{aligned}
\end{equation}
The constant $C$ is independent of $t$, $\mathbf{y}$,
and $V$.
\end{lemma}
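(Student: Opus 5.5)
Since $L$ is nonnegative self-adjoint and $K_t=e^{-tL}$, the natural route is through the spectral theorem and the fact that $e^{-tL}$ maps $L^2(dw)$ into $\mathcal{D}(L)$. The membership is in fact already stated at the operator level: for $f\in L^2(dw)$, $K_tf\in\mathcal{D}(L)$. The point is to pass to the kernel $k_t(\cdot,\mathbf{y})$ and to get the quantitative bound. I write $k_t(\cdot,\mathbf{y})=K_{t/2}\bigl(k_{t/2}(\cdot,\mathbf{y})\bigr)$, using the semigroup identity \eqref{eq:kernel_semigroup}, valid for a.e. $\mathbf{y}$. By \eqref{eq:kernels_compare} and \eqref{eq:Gauss}, $k_{t/2}(\cdot,\mathbf{y})\in L^2(dw)$ for a.e. $\mathbf{y}$, with
\[
\|k_{t/2}(\cdot,\mathbf{y})\|_{L^2(dw)}^2\le \int h_{t/2}(\mathbf{x},\mathbf{y})^2\,dw(\mathbf{x})=h_t(\mathbf{y},\mathbf{y})\le \frac{C}{w(B(\mathbf{y},\sqrt t))}.
\]
The equality $\int h_{t/2}^2=h_t(\mathbf{y},\mathbf{y})$ is the semigroup property of the Dunkl heat kernel, and the on-diagonal bound comes from Theorem~\ref{theorem:heat}. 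Alternatively, one can bound $\int h_{t/2}^2\,dw\le \sup_{\mathbf{x}} h_{t/2}(\mathbf{x},\mathbf{y})\int h_{t/2}\,dw$ and use \eqref{eq:h_integral_1}. In that case I still need $\sup_{\mathbf{x}}h_{t/2}(\mathbf{x},\mathbf{y})\le C/w(B(\mathbf{y},\sqrt t))$, which follows from \eqref{eq:Gauss}, because the maximum in the denominator of $\mathcal{G}$ is at least $w(B(\mathbf{y},\sqrt{t/c}))$, and by doubling this is comparable to $w(B(\mathbf{y},\sqrt t))$.

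Hence $k_t(\cdot,\mathbf{y})=e^{-(t/2)L}g$ with $g=k_{t/2}(\cdot,\mathbf{y})\in L^2(dw)$, and this lies in $\mathcal{D}(L)$. The spectral theorem then gives
\[
\langle Le^{-sL}g,e^{-sL}g\rangle=\int_0^\infty \lambda e^{-2s\lambda}\,dE_{g,g}(\lambda)\le \frac{1}{2es}\|g\|_2^2,
\]
using $\sup_\lambda \lambda e^{-2s\lambda}=1/(2es)$. With $s=t/2$ this yields the bound $C/(t\,w(B(\mathbf{y},\sqrt t)))$. Here $C$ depends only on the heat kernel bounds, hence not on $V$; this is exactly where the domination \eqref{eq:kernels_compare} is used. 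The equality $\mathbf{Q}(u,u)=\langle Lu,u\rangle$ for $u\in\mathcal{D}(L)$ is the standard form–operator correspondence stated above.

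The main obstacle is the measure-theoretic identification. I need that for a.e. $\mathbf{y}$ the function $k_t(\cdot,\mathbf{y})$ coincides a.e. with $K_{t/2}$ applied to the $L^2$ function $k_{t/2}(\cdot,\mathbf{y})$, since kernels are only defined almost everywhere. I would handle this by Fubini. Pair both sides against $\varphi\otimes\psi$ with $\varphi,\psi\in C_c$, use $\langle K_t\psi,\varphi\rangle=\langle K_{t/2}\psi,K_{t/2}\varphi\rangle$, and then use separability of $L^2$ to upgrade to equality for a.e. $\mathbf{y}$. A jointly measurable version is assumed, which makes Fubini legitimate.
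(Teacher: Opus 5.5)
Your proof is correct. The paper does not prove this lemma but quotes it from earlier work, and your argument is the standard analytic-semigroup route: factor $k_t(\cdot,\mathbf{y})=K_{t/2}\bigl(k_{t/2}(\cdot,\mathbf{y})\bigr)$, bound $\|k_{t/2}(\cdot,\mathbf{y})\|_{L^2(dw)}^2\le h_t(\mathbf{y},\mathbf{y})\le C/w(B(\mathbf{y},\sqrt{t}))$ via \eqref{eq:kernels_compare} and Theorem~\ref{theorem:heat}, and use $\sup_{\lambda\ge0}\lambda e^{-t\lambda}=1/(et)$, so the constant is independent of $V$. The identification step you call the main obstacle follows directly from the a.e.\ identity \eqref{eq:kernel_semigroup}, the integral-kernel representation of $K_{t/2}$, and Fubini, so your separability argument is not needed.
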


In particular,
\begin{equation}\label{eq:kernel_derivative_L2}
    \|T_{j,\mathbf{x}}k_t(\cdot,\mathbf{y})\|_{L^2(dw)}
    \leq
    \frac{C}
    {\sqrt{t}\,w(B(\mathbf{y},\sqrt{t}))^{1/2}},
    \qquad j=1,\ldots,N.
\end{equation}

Finally, we recall the perturbation formula
(see~\cite[proof of Lemma~16]{main}):
\begin{equation}\label{eq:perturbation_formula}
    h_t(\mathbf{x},\mathbf{y})-k_t(\mathbf{x},\mathbf{y})
    =
    \int_0^t\int_{\mathbb{R}^N}
    h_{t-s}(\mathbf{x},\mathbf{z})
    V(\mathbf{z})k_s(\mathbf{z},\mathbf{y})
    \,dw(\mathbf{z})\,ds.
\end{equation}
This identity holds for every $t>0$ and almost every
$(\mathbf{x},\mathbf{y})$.
The integrand is nonnegative, and the integral is
finite almost everywhere by~\eqref{eq:kernels_compare}. Lower bounds for Dunkl--Schr\"odinger semigroup
kernels with Green bounded potentials were studied
in~\cite{DzH_Green}.

\section{Riesz transforms}

Let $\mathcal{Q}$ be a collection of closed cubes in $\mathbb{R}^N$
with sides parallel to the coordinate axes and pairwise disjoint
interiors, such that
\[
    \bigcup_{Q\in\mathcal{Q}}Q=\mathbb{R}^N.
\]
All cubes are assumed to have positive, finite side-length.
We denote the side-length of $Q$ by $d(Q)$ and let $Q^*$ be
the concentric cube with side-length $2d(Q)$.
Repeated asterisks denote successive applications of this dilation.
Assume that there exists $C_0\geq 1$ such that
\begin{equation}\label{eq:finite_overlap}\tag{F}
    Q_1^{****}\cap Q_2^{****}\neq\varnothing
    \quad\Longrightarrow\quad
    C_0^{-1}d(Q_1)\leq d(Q_2)\leq C_0d(Q_1)
    \qquad(Q_1,Q_2\in\mathcal{Q}).
\end{equation}
The family $\mathcal Q$ is countable, since its members
have pairwise disjoint nonempty interiors.
The family $\{Q^{****}:Q\in\mathcal{Q}\}$
has uniformly bounded overlap.

For $f\in L^1(dw)$, $0<\varepsilon<1$, and $j=1,\ldots,N$,
set
\begin{equation}\label{eq:Rj_truncated}
    \widetilde{R}_j^\varepsilon f(\mathbf{x})
    =
    \int_{\mathbb{R}^N}
    \left(
    \int_\varepsilon^{1/\varepsilon}
    T_{j,\mathbf{x}}k_t(\mathbf{x},\mathbf{y})
    \,\frac{dt}{\sqrt{t}}
    \right)
    f(\mathbf{y})\,dw(\mathbf{y}).
\end{equation}
Here $T_{j,\mathbf{x}}k_t(\cdot,\mathbf{y})$ denotes the weak
Dunkl derivative with respect to the first variable, characterized by
\[
    \int_{\mathbb{R}^N}
    T_{j,\mathbf{x}}k_t(\mathbf{x},\mathbf{y})
    \varphi(\mathbf{x})\,dw(\mathbf{x})
    =
    -\int_{\mathbb{R}^N}
    k_t(\mathbf{x},\mathbf{y})T_j\varphi(\mathbf{x})\,dw(\mathbf{x}),
    \qquad \varphi\in C_c^\infty(\mathbb{R}^N).
\]
Lemma~\ref{lem:L_on_k_t} gives
\[
    \|T_{j,\mathbf{x}}k_t(\cdot,\mathbf{y})\|_{L^2(dw)}
    \leq
    \frac{C}{\sqrt{t}\,w(B(\mathbf{y},\sqrt{t}))^{1/2}}.
\]
By~\eqref{eq:balls_asymp},
$w(B(\mathbf{y},\sqrt{t}))\geq c\,t^{\mathbf{N}/2}$,
uniformly in $\mathbf{y}\in\mathbb{R}^N$ and $t>0$.
Consequently, Minkowski's integral inequality yields
\[
\begin{aligned}
    &\left\|
    \int_{\mathbb{R}^N}|f(\mathbf{y})|
    \int_{\varepsilon}^{1/\varepsilon}
    |T_{j,\mathbf{x}}k_t(\cdot,\mathbf{y})|
    \,\frac{dt}{\sqrt{t}}\,dw(\mathbf{y})
    \right\|_{L^2(dw)}
    \\
    &\qquad\leq
    \int_{\mathbb{R}^N}|f(\mathbf{y})|
    \int_{\varepsilon}^{1/\varepsilon}
    \|T_{j,\mathbf{x}}k_t(\cdot,\mathbf{y})\|_{L^2(dw)}
    \,\frac{dt}{\sqrt{t}}\,dw(\mathbf{y})
    \\
    &\qquad\leq
    C\|f\|_{L^1(dw)}
    \int_{\varepsilon}^{1/\varepsilon}
    t^{-1-\mathbf{N}/4}\,dt
    \leq C_{\varepsilon}\|f\|_{L^1(dw)}.
\end{aligned}
\]
Thus the nonnegative function inside the $L^2(dw)$ norm is
finite almost everywhere. In particular, the integral
in~\eqref{eq:Rj_truncated} is absolutely convergent for almost
every $\mathbf{x}$, and
\[
    \|\widetilde{R}_j^\varepsilon f\|_{L^2(dw)}
    \leq C_{\varepsilon}\|f\|_{L^1(dw)}.
\]

For $Q\in\mathcal{Q}$, define
\begin{equation}\label{eq:R_jepsilon_0}
    \widetilde{R}_{j,Q,0}^\varepsilon f(\mathbf{x})
    =
    \int_{\mathbb{R}^N}
    \left(
    \int_\varepsilon^{1/\varepsilon}
    \mathbf{1}_{(0,d(Q)^2)}(t)
    T_{j,\mathbf{x}}k_t(\mathbf{x},\mathbf{y})
    \,\frac{dt}{\sqrt{t}}
    \right)
    f(\mathbf{y})\,dw(\mathbf{y}),
\end{equation}
and
\begin{equation}\label{eq:R_jepsilon_infty}
    \widetilde{R}_{j,Q,\infty}^\varepsilon f(\mathbf{x})
    =
    \int_{\mathbb{R}^N}
    \left(
    \int_\varepsilon^{1/\varepsilon}
    \mathbf{1}_{[d(Q)^2,\infty)}(t)
    T_{j,\mathbf{x}}k_t(\mathbf{x},\mathbf{y})
    \,\frac{dt}{\sqrt{t}}
    \right)
    f(\mathbf{y})\,dw(\mathbf{y}).
\end{equation}
These operators are also bounded from $L^1(dw)$ to $L^2(dw)$
for each fixed $\varepsilon$, and
\[
    \widetilde{R}_j^\varepsilon
    =
    \widetilde{R}_{j,Q,0}^\varepsilon
    +
    \widetilde{R}_{j,Q,\infty}^\varepsilon.
\]
We denote their integral kernels by
$\widetilde{R}_j^\varepsilon(\mathbf{x},\mathbf{y})$,
$\widetilde{R}_{j,Q,0}^\varepsilon(\mathbf{x},\mathbf{y})$,
and
$\widetilde{R}_{j,Q,\infty}^\varepsilon(\mathbf{x},\mathbf{y})$,
respectively.

The following proposition adapts the distributional
construction in~\cite[Section~2]{DzP}
to the Dunkl setting.
We identify a function $g\in L^2(dw)$ with the tempered
distribution
\[
    \langle g,\phi\rangle
    =\int_{\mathbb{R}^N}g(\mathbf{x})\phi(\mathbf{x})\,dw(\mathbf{x}),
    \qquad \phi\in\mathcal{S}(\mathbb{R}^N).
\]

\begin{proposition}\label{prop:Riesz_distributions}
For every $f\in L^1(dw)$, $Q\in\mathcal{Q}$, and
$j\in\{1,\ldots,N\}$, the limits
\begin{equation}\label{eq:limit_R_jepsilon}
    \widetilde{R}_jf
    :=\lim_{\varepsilon\to0^+}\widetilde{R}_j^\varepsilon f,
\end{equation}
\begin{equation}\label{eq:limit_R_jepsilon_0}
    \widetilde{R}_{j,Q,0}f
    :=\lim_{\varepsilon\to0^+}
    \widetilde{R}_{j,Q,0}^\varepsilon f,
\end{equation}
and
\begin{equation}\label{eq:limit_R_jepsilon_infty}
    \widetilde{R}_{j,Q,\infty}f
    :=\lim_{\varepsilon\to0^+}
    \widetilde{R}_{j,Q,\infty}^\varepsilon f
\end{equation}
exist in $\mathcal{S}'(\mathbb{R}^N)$.
Moreover,
\[
    \widetilde{R}_jf
    =
    \widetilde{R}_{j,Q,0}f
    +
    \widetilde{R}_{j,Q,\infty}f.
\]
There is a constant $C>0$, independent of $f$, $Q$, and $j$,
such that, for every $\phi\in\mathcal{S}(\mathbb{R}^N)$,
\begin{equation}\label{eq:distribution}
\begin{aligned}
    &|\langle\widetilde{R}_jf,\phi\rangle|
    +|\langle\widetilde{R}_{j,Q,0}f,\phi\rangle|
    +|\langle\widetilde{R}_{j,Q,\infty}f,\phi\rangle|\leq
    C\|f\|_{L^1(dw)}
    \left(
        \|\phi\|_{L^2(dw)}
        +\|T_j\phi\|_{L^\infty}
    \right).
\end{aligned}
\end{equation}
\end{proposition}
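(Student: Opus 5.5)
Pairing with a test function and moving the weak Dunkl derivative onto $\phi$ should do most of the work. For $\phi\in\mathcal S(\mathbb R^N)$, Fubini (justified by the absolute convergence shown above) and the definition of the weak derivative give
\[
    \langle\widetilde R_j^\varepsilon f,\phi\rangle
    =-\int_{\mathbb R^N} f(\mathbf y)
    \int_\varepsilon^{1/\varepsilon}
    \bigl(K_tT_j\phi\bigr)(\mathbf y)\,\frac{dt}{\sqrt t}\,dw(\mathbf y),
\]
using the symmetry $k_t(\mathbf x,\mathbf y)=k_t(\mathbf y,\mathbf x)$. The weak identity is stated for $C_c^\infty$ test functions, so I first extend it to $\phi\in\mathcal S$. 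This extension uses a cutoff $\phi\chi(\cdot/R)$ and the $L^2$ bounds on $k_t(\cdot,\mathbf y)$ and $T_{j,\mathbf x}k_t(\cdot,\mathbf y)$. The same formula holds for the $Q,0$ and $Q,\infty$ pieces with the indicator of $t<d(Q)^2$ or $t\ge d(Q)^2$ inserted. Existence of the limits and the bound \eqref{eq:distribution} therefore reduce to showing that
\[
    \Phi(\mathbf y)=\int_0^\infty \bigl|K_tT_j\phi(\mathbf y)\bigr|\,\frac{dt}{\sqrt t}
\]
satisfies $\|\Phi\|_{L^\infty}\le C(\|\phi\|_{L^2(dw)}+\|T_j\phi\|_{L^\infty})$. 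Once this holds, dominated convergence in $t$ for each $\mathbf y$, followed by dominated convergence in $\mathbf y$ against $|f|\in L^1$, gives all three limits. The decomposition $\widetilde R_j=\widetilde R_{j,Q,0}+\widetilde R_{j,Q,\infty}$ follows from linearity before the limit, and the constant is uniform in $Q$ since the indicators only shrink the integrand.

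For small times, $0<t\le1$, I use contractivity of $K_t$ on $L^\infty$ via \eqref{eq:kernel_mass} and \eqref{eq:kernels_compare}: $|K_tT_j\phi|\le\|T_j\phi\|_{L^\infty}$. Integrating against $t^{-1/2}$ over $(0,1]$ gives $2\|T_j\phi\|_{L^\infty}$.

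For large times, $t>1$, I return to the kernel side and write $K_tT_j\phi(\mathbf y)=-\int T_{j,\mathbf x}k_t(\mathbf x,\mathbf y)\phi(\mathbf x)\,dw(\mathbf x)$. Cauchy--Schwarz and \eqref{eq:kernel_derivative_L2} then give
\[
    |K_tT_j\phi(\mathbf y)|\le \frac{C\|\phi\|_{L^2(dw)}}{\sqrt t\,w(B(\mathbf y,\sqrt t))^{1/2}}
    \le C\|\phi\|_{L^2(dw)}\,t^{-1/2-\mathbf N/4},
\]
by \eqref{eq:volume_lower}. Since $\mathbf N\ge N\ge1$, the function $t^{-1-\mathbf N/4}$ is integrable on $(1,\infty)$, which gives the $\|\phi\|_{L^2}$ term.

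The main obstacle is the technical justification rather than the estimates. One point is extending the weak-derivative identity from $C_c^\infty$ to Schwarz functions. The cutoff error is $k_t(\cdot,\mathbf y)\,T_j\bigl(\phi(1-\chi(\cdot/R))\bigr)$, and this tends to $0$ in the pairing because $k_t(\cdot,\mathbf y)\in L^2(dw)$ and $T_j$ of the tail tends to $0$ in $L^2(dw)$. The other point is measurability and a.e. issues coming from the jointly measurable version of $k_t$, which Fubini plus the $L^1\to L^2$ bound established above handle. Both fit the dominated-convergence framework described in the first paragraph.
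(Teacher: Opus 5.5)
Your proposal is correct and follows the paper's proof essentially step for step. Like the paper, you move the weak derivative onto $\phi$ (extended from $C_c^\infty$ to $\mathcal{S}$ by cutoffs), bound by $\|T_j\phi\|_{L^\infty}$ for $t\le 1$ via $0\le k_t\le h_t$, use Cauchy--Schwarz with the energy estimate and $w(B(\mathbf{y},\sqrt{t}))\ge c\,t^{\mathbf{N}/2}$ for $t\ge 1$, and conclude by Fubini and dominated convergence with the time interval split at $d(Q)^2$. The only point you leave implicit, which the paper states, is that $\|\phi\|_{L^2(dw)}+\|T_j\phi\|_{L^\infty}$ is a continuous seminorm on $\mathcal{S}(\mathbb{R}^N)$ (by $\|T_j\phi\|_{L^\infty}\le C\|\nabla\phi\|_{L^\infty}$ and the polynomial growth of $dw$); this is what makes the limiting functionals tempered distributions.
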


\begin{proof}
Fix $\phi\in\mathcal{S}(\mathbb{R}^N)$ and set
\[
    F_\phi(t,\mathbf{y})
    =
    \int_{\mathbb{R}^N}
    T_{j,\mathbf{x}}k_t(\mathbf{x},\mathbf{y})
    \phi(\mathbf{x})\,dw(\mathbf{x}).
\]
For $0<t\leq1$, the weak integration-by-parts identity gives
\[
    F_\phi(t,\mathbf{y})
    =
    -\int_{\mathbb{R}^N}
    k_t(\mathbf{x},\mathbf{y})
    T_j\phi(\mathbf{x})\,dw(\mathbf{x}).
\]
The identity extends from compactly supported smooth test
functions to Schwartz functions by approximation with smooth
radial cutoffs, using
$k_t(\cdot,\mathbf{y}),
T_{j,\mathbf{x}}k_t(\cdot,\mathbf{y})\in L^2(dw)$.
Since $0\leq k_t\leq h_t$ and
$\int_{\mathbb{R}^N}h_t(\mathbf{x},\mathbf{y})\,dw(\mathbf{x})=1$,
we obtain
\begin{equation}\label{eq:dist_small_t}
    |F_\phi(t,\mathbf{y})|
    \leq \|T_j\phi\|_{L^\infty},
    \qquad 0<t\leq1.
\end{equation}

For $t\geq1$, the Cauchy--Schwarz inequality,
Lemma~\ref{lem:L_on_k_t}, and~\eqref{eq:balls_asymp} yield
\begin{equation}\label{eq:dist_large_t}
\begin{aligned}
    |F_\phi(t,\mathbf{y})|
    &\leq
    \|\phi\|_{L^2(dw)}
    \|T_{j,\mathbf{x}}k_t(\cdot,\mathbf{y})\|_{L^2(dw)}
    \leq
    \frac{C\|\phi\|_{L^2(dw)}}
    {\sqrt{t}\,w(B(\mathbf{y},\sqrt{t}))^{1/2}}
    \leq
    C\|\phi\|_{L^2(dw)}t^{-1/2-\mathbf{N}/4}.
\end{aligned}
\end{equation}
Consequently, uniformly for almost every $\mathbf{y}$,
\begin{equation}\label{eq:dist_goal}
    \int_0^\infty |F_\phi(t,\mathbf{y})|
    \,\frac{dt}{\sqrt{t}}
    \leq
    C\left(
        \|T_j\phi\|_{L^\infty}
        +\|\phi\|_{L^2(dw)}
    \right).
\end{equation}
In particular,
$|f(\mathbf{y})F_\phi(t,\mathbf{y})|$
is integrable with respect to
$dw(\mathbf{y})\,dt/\sqrt{t}$.

For an interval $I\subseteq(0,\infty)$, define
\[
    \langle\mathcal{R}_{j,I}f,\phi\rangle
    =
    \int_{\mathbb{R}^N}f(\mathbf{y})
    \int_I F_\phi(t,\mathbf{y})
    \,\frac{dt}{\sqrt{t}}\,dw(\mathbf{y}).
\]
By~\eqref{eq:dist_goal},
\[
    |\langle\mathcal{R}_{j,I}f,\phi\rangle|
    \leq
    C\|f\|_{L^1(dw)}
    \left(
        \|T_j\phi\|_{L^\infty}
        +\|\phi\|_{L^2(dw)}
    \right),
\]
with a constant independent of $I$.
The expression on the right is a continuous seminorm of
$\phi$ in $\mathcal{S}(\mathbb{R}^N)$ (see Lemma~\ref{lem:Dunkl_gradient_Linfty}); hence
$\mathcal{R}_{j,I}f$ is a tempered distribution.

For the truncated operators, Fubini's theorem gives the same
pairing with the time interval restricted to
$I\cap[\varepsilon,1/\varepsilon]$.
Dominated convergence therefore proves
\eqref{eq:limit_R_jepsilon}--\eqref{eq:limit_R_jepsilon_infty}
by taking, respectively,
\[
    I=(0,\infty),\qquad
    I=(0,d(Q)^2),\qquad
    I=[d(Q)^2,\infty).
\]
Estimate~\eqref{eq:distribution} follows from
\eqref{eq:dist_goal}, and the asserted decomposition follows
by splitting the time integral at $d(Q)^2$.
\end{proof}

\section{Statement of the results}

Our main result characterizes the Hardy space associated with
$L$ by the Riesz transforms $\widetilde{R}_j$.
We first recall the maximal and atomic definitions and the
conditions on $V$ and $\mathcal{Q}$ under which the corresponding
spaces coincide.

For each $f\in L^1(dw)$, we use a version of $K_tf(\mathbf{x})$
that is continuous in $t>0$ for almost every $\mathbf{x}$.
Such a version exists by the $L^1$--$L^2$ regularization
of the semigroup and its analyticity on $L^2(dw)$.
Consequently, the supremum in the definition below may
equivalently be taken over positive rational times.

\begin{definition}\normalfont
A function $f\in L^1(dw)$ belongs to the Hardy space $H_L^1$
if
\[
    f^*(\mathbf{x})
    =\sup_{t>0}|K_tf(\mathbf{x})|
\]
belongs to $L^1(dw)$. We equip this space with the norm
\[
    \|f\|_{H_L^1}=\|f^*\|_{L^1(dw)}.
\]
\end{definition}

\begin{definition}\label{def:atomic}\normalfont
A measurable function $a$ is called a
$\mathcal{Q}$-atom if there exist $Q\in\mathcal{Q}$
and a ball $B=B(\mathbf{x}_0,r)$ such that
\begin{enumerate}[(A)]
    \item
    $\operatorname{supp}a\subseteq B\subseteq Q^{****}$;
    \label{numitem:supports}

    \item
    $\|a\|_{L^\infty(dw)}\leq w(B)^{-1}$;
    \label{numitem:size}

    \item
    if $r<d(Q)$, then
    $\displaystyle\int_{\mathbb{R}^N}a(\mathbf{x})\,dw(\mathbf{x})=0$.
    \label{numitem:cancellation}
\end{enumerate}
The atomic Hardy space $H_{\mathcal{Q}}^{1,\mathrm{at}}$
consists of all $f\in L^1(dw)$ admitting a representation
\[
    f=\sum_{\nu=1}^{\infty}c_\nu a_\nu
    \quad\text{in }L^1(dw),
    \qquad
    \sum_{\nu=1}^{\infty}|c_\nu|<\infty,
\]
where each $a_\nu$ is a $\mathcal{Q}$-atom.
Its norm is
\[
    \|f\|_{H_{\mathcal{Q}}^{1,\mathrm{at}}}
    =
    \inf\left\{
        \sum_{\nu=1}^{\infty}|c_\nu|:
        f=\sum_{\nu=1}^{\infty}c_\nu a_\nu
        \text{ is such a representation}
    \right\}.
\]
\end{definition}

Every $\mathcal{Q}$-atom satisfies $\|a\|_{L^1(dw)}\leq1$.
Consequently, the series above converges absolutely in $L^1(dw)$
whenever its coefficients are summable, and
\[
    \|f\|_{L^1(dw)}
    \leq \|f\|_{H_{\mathcal{Q}}^{1,\mathrm{at}}}.
\]
With the atomic norm, $H_{\mathcal{Q}}^{1,\mathrm{at}}$
is a Banach space.

Following~\cite{DZ_Studia,main}, we impose two additional
conditions on $V$ and $\mathcal{Q}$.
The first controls the local contribution of the potential:
there exist $C,\delta>0$ such that
\begin{equation}\label{eq:K}\tag{K}
    \int_0^{2t}\int_{Q^{***}}
    V(\mathbf{y})\mathcal{G}_{2s/c}(\mathbf{x},\mathbf{y})
    \,dw(\mathbf{y})\,ds
    \leq C\left(\frac{t}{d(Q)^2}\right)^\delta
\end{equation}
for every $Q\in\mathcal{Q}$, $\mathbf{x}\in\mathbb{R}^N$,
and $0<t\leq d(Q)^2$, where $c>0$ is the constant
in Theorem~\ref{theorem:heat}.
The second requires decay of the total mass of the
Schr\"odinger kernel at large times: there exist $C,\eta>0$
such that
\begin{equation}\label{eq:D}\tag{D}
    \operatorname*{ess\,sup}_{\mathbf{y}\in Q^{****}}
    \int_{\mathbb{R}^N}
    k_{2^n d(Q)^2}(\mathbf{x},\mathbf{y})\,dw(\mathbf{x})
    \leq Cn^{-1-\eta},
    \qquad Q\in\mathcal{Q},\quad n=1,2,\ldots.
\end{equation}

The following atomic characterization was proved
in~\cite[Theorem~7]{main}.

\begin{theorem}\label{theorem:main_prev}
Assume that \eqref{eq:finite_overlap}, \eqref{eq:D},
and \eqref{eq:K} hold.
Then
\[
    H_L^1=H_{\mathcal{Q}}^{1,\mathrm{at}}
\]
with equivalent norms. More precisely, there is a constant
$C>0$ such that
\[
    C^{-1}\|f\|_{H_{\mathcal{Q}}^{1,\mathrm{at}}}
    \leq \|f\|_{H_L^1}
    \leq C\|f\|_{H_{\mathcal{Q}}^{1,\mathrm{at}}}
\]
for every $f$ in either of these spaces.
\end{theorem}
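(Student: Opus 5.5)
The proof has two inclusions. For $H_{\mathcal{Q}}^{1,\mathrm{at}}\subseteq H_L^1$ it suffices to show $\|a^*\|_{L^1(dw)}\leq C$ for every $\mathcal{Q}$-atom $a$. For the reverse inclusion I would localize $f\in H_L^1$ to the cubes of $\mathcal{Q}$ and use the local (scale-$d(Q)$) Dunkl Hardy space theory, comparing $K_t$ with $H_t$ for $t\leq d(Q)^2$ through the perturbation formula~\eqref{eq:perturbation_formula} and condition~\eqref{eq:K}.

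\emph{Atoms.} Let $a$ be a $\mathcal{Q}$-atom with $Q$ and $B=B(\mathbf{x}_0,r)\subseteq Q^{****}$, and split $\sup_{t>0}$ into $t\leq d(Q)^2$ and $t>d(Q)^2$.

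For small times I write $K_ta=H_ta-(H_t-K_t)a$.
\begin{itemize}
\item If $r<d(Q)$, then $a$ has mean zero, so it is (a multiple of) a Dunkl $H^1$ atom, and $\|\sup_t|H_ta|\|_{L^1(dw)}\leq C$ by~\cite{ADzH}.
\item If $r\geq d(Q)$, then $w(B)\asymp w(Q^{****})$ by~\eqref{eq:balls_asymp} and~\eqref{eq:finite_overlap}. Then $\sup_{t\leq d(Q)^2}|H_ta|$ is bounded on a fixed dilate of $Q^{****}$ through the $L^2$ maximal inequality and Cauchy--Schwarz. Off that dilate it is bounded by the Gaussian factor in~\eqref{eq:Gauss}, which at distance $\gtrsim d(Q)$ and times $t\leq d(Q)^2$ is integrable with bound $C$.
\item For the difference, \eqref{eq:perturbation_formula} and $h\lesssim\mathcal{G}$ give $0\leq h_t-k_t\leq\int_0^t\int h_{t-s}(\mathbf{x},\mathbf{z})V(\mathbf{z})h_s(\mathbf{z},\mathbf{y})$. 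Restricting $\mathbf{z}$ to $Q^{***}$ and using~\eqref{eq:K} yields a bound $C(t/d(Q)^2)^\delta$ after integrating in $\mathbf{x}$. The part with $\mathbf{z}\notin Q^{***}$ is controlled by Gaussian decay, since $\mathbf{y}\in Q^{****}$ but the relevant distances are of order $d(Q)$.
\end{itemize}
To take the supremum over $t\leq d(Q)^2$, I would dyadically decompose $t\in(2^{-m-1},2^{-m}]d(Q)^2$. On each block the supremum of $h_{t-s}$ is dominated by $C\mathcal{G}_{2^{-m}d(Q)^2/c}$, which gives a geometrically summable series in $m$.

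For large times, take $t\in[2^{n}d(Q)^2,2^{n+1}d(Q)^2]$ and $s=2^{n-1}d(Q)^2$. Then $k_t(\mathbf{x},\mathbf{y})=\int k_{t-s}(\mathbf{x},\mathbf{z})k_s(\mathbf{z},\mathbf{y})\,dw(\mathbf{z})$, and $\sup_{t}k_{t-s}(\mathbf{x},\mathbf{z})\leq C\mathcal{G}_{2^{n+2}d(Q)^2/c'}(\mathbf{x},\mathbf{z})$, because $\mathcal{G}_u$ is comparable for comparable $u$. Using~\eqref{eq:G_integral} and then~\eqref{eq:D} (valid since $\mathbf{y}\in Q^{****}$), I get
\[
\int\sup_{t\in[2^n,2^{n+1}]d(Q)^2}|K_ta|\,dw\leq Cn^{-1-\eta}\|a\|_{L^1(dw)}
\]
for $n\geq 2$; the case $n=1$ is bounded by $C$. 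Summing over $n$ gives a bound independent of $a$.

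\emph{Maximal to atomic.} I would take a smooth partition of unity $\{\psi_Q\}$ with $\supp\psi_Q\subseteq Q^{*}$ and $\|\nabla\psi_Q\|_\infty\lesssim d(Q)^{-1}$, and set $f_Q=\psi_Qf$. The goal is
\[
\sum_Q\Bigl\|\sup_{t\leq d(Q)^2}|H_tf_Q|\Bigr\|_{L^1(dw)}\leq C\|f\|_{H_L^1}.
\]
Once this holds, the local Hardy space theory (at scale $d(Q)$, as in~\cite{Hejna}) decomposes $f_Q$ into atoms supported near $Q^{**}$. These are either cancellative with $r<d(Q)$, or have $r\sim d(Q)$ without cancellation. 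After splitting large balls, both are $\mathcal{Q}$-atoms, and summing over $Q$ gives the reverse inequality.

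To prove the goal I would write
\[
H_tf_Q=\psi_QK_tf+[H_t,\psi_Q]f+\psi_Q(H_t-K_t)f+\bigl(H_t(\psi_Qf)-\psi_QH_tf\bigr),
\]
where the last two pieces are the error terms to be controlled. The first term is bounded by $\psi_Qf^*$ and sums to $C\|f^*\|_{L^1(dw)}$. The commutator-type term gains $\sqrt{t}/d(Q)$ from the Lipschitz bound on $\psi_Q$ near $Q^{**}$, and otherwise gains Gaussian decay. The $(H_t-K_t)$ term is controlled by~\eqref{eq:K} as above, which gives $(t/d(Q)^2)^\delta$ factors that are summable dyadically.

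After summation over $Q$ using the bounded overlap from~\eqref{eq:finite_overlap}, each error term is bounded by $C\|f\|_{L^1(dw)}\leq C\|f\|_{H^1_L}$. I expect this comparison step to be the main obstacle: the errors must be integrable uniformly over the family $\mathcal{Q}$, while the Dunkl kernels only decay in the orbit distance $d(\mathbf{x},\mathbf{y})$. I would first try to absorb reflected contributions using the $(1+\|\mathbf{x}-\mathbf{y}\|/\sqrt{t})^{-2}$ factor in~\eqref{eq:Gauss} together with~\eqref{eq:G_integral}.
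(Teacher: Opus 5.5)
The paper does not reprove this theorem; it quotes it from its reference. Your overall scheme is the standard Dziuba\'nski--Zienkiewicz localization: a small/large time split with \eqref{eq:K} and \eqref{eq:D} for atoms, and the local theory for the converse. The proof of Theorem~\ref{theo:main} follows the same scheme. However, your maximal-to-atomic comparison fails as written. The obstacle is not the reflections, which the factor $(1+\|\mathbf{x}-\mathbf{y}\|/\sqrt{t})^{-2}$ handles. It is a scale mismatch.

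Your error terms $[H_t,\psi_Q]f$ and $\psi_Q(H_t-K_t)f$ involve all of $f$. For sources far from $Q$, the only decay you use is that of $h_t$ at $t\le d(Q)^2$, i.e.\ at the scale of the \emph{target} cube. Condition \eqref{eq:finite_overlap} allows $d(Q)$ to be comparable to the distance from $Q$ to the source.

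Here is a concrete failure. Take $k\equiv0$, $N\ge3$, $V(\mathbf{x})=(1+\|\mathbf{x}\|)^{-2}$, and Whitney-type cubes with $d(Q)\asymp c(1+\|\mathbf{x}_Q\|)$, $c$ small. Then \eqref{eq:finite_overlap} holds, \eqref{eq:K} holds with $\delta=1$, and \eqref{eq:D} holds because $\int k_t(\mathbf{x},\mathbf{y})\,d\mathbf{x}$ decays like a positive power of $(1+\|\mathbf{y}\|)^2/t$. Let $f\ge0$ be a bump at the origin, which is a multiple of a $\mathcal{Q}$-atom. For $\mathbf{x}$ far away, $[H_t,\psi_Q]f(\mathbf{x})=-\psi_Q(\mathbf{x})H_tf(\mathbf{x})$. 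Taking $t=d(Q)^2$ gives $\sum_Q\sup_{t\le d(Q)^2}|[H_t,\psi_Q]f(\mathbf{x})|\gtrsim\|\mathbf{x}\|^{-N}$, which is not integrable. By \eqref{eq:perturbation_formula}, $(H_t-K_t)f\gtrsim H_tf$ at these points as well.

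So the error terms cannot be bounded separately. Only $H_t(\psi_Qf)-\psi_QK_tf$ is controllable, and its far part must be estimated with the Schr\"odinger kernel and \eqref{eq:D} at the scale of the \emph{source} cube. What you are missing is the near/far splitting used in the proof of Theorem~\ref{theo:main}. With $\psi_Qf=\psi_Qf_Q^{\mathrm{near}}$, write
\[
H_t(\psi_Qf)=\psi_QK_tf-\psi_QK_tf_Q^{\mathrm{far}}+(H_t-K_t)(\psi_Qf)+\bigl(K_t(\psi_Qf_Q^{\mathrm{near}})-\psi_QK_tf_Q^{\mathrm{near}}\bigr).
\]
The last two terms are local. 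For the far term, interchange the sums over $Q$ and over the source cubes $P$, as in Lemma~\ref{lem:sum_outside}. Then use $\int_{\mathbb{R}^N\setminus P^{**}}\sup_{t>0}k_t(\mathbf{x},\mathbf{y})\,dw(\mathbf{x})\le C$ for $\mathbf{y}\in P^*$; small times follow from the polynomial factor, and large times from your own dyadic argument with \eqref{eq:D} for $P$. Two smaller points: your four-term identity counts the commutator twice, and local atoms need not be supported near $Q^{**}$, so you need a cutoff-and-redecomposition step like the $\psi_Q$ argument in that proof.

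The atomic-to-maximal part has two steps that also need repair.

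First, replacing $k_s$ by $h_s$ in \eqref{eq:perturbation_formula} discards the only global control of the potential, $\int_0^\infty\int V(\mathbf{z})k_s(\mathbf{z},\mathbf{y})\,dw(\mathbf{z})\,ds\le1$ (Lemma~\ref{lem:VK}). Nothing bounds $V$ off $Q^{***}$, so Gaussian decay cannot control $\int V(\mathbf{z})h_s(\mathbf{z},\mathbf{y})\,dw(\mathbf{z})$, which is infinite for rapidly growing $V$. In addition, for $\mathbf{y}\in Q^{****}\setminus Q^{***}$, points $\mathbf{z}\notin Q^{***}$ can be arbitrarily close to $\mathbf{y}$. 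The repair is as follows:
\begin{enumerate}[(i)]
\item keep $k_s$ and work with \eqref{eq:K} for the cube $P\ni\mathbf{y}$, which has $d(P)\asymp d(Q)$;
\item for $\mathbf{x}\notin P^{**}$, bound the kernel by $h_t$;
\item for $\mathbf{x}\in P^{**}$ and $\mathbf{z}\notin P^{***}$, use the separation $\|\mathbf{x}-\mathbf{z}\|\gtrsim d(P)$ inside $h_{t-s}(\mathbf{x},\mathbf{z})$ together with Lemma~\ref{lem:VK}.
\end{enumerate}
This is how Lemma~\ref{lem:diff} and \eqref{eq:diff_enlarged} are organized.

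Second, $\sup_t h_{t-s}$ is not dominated by $C\mathcal{G}_{2^{-m}d(Q)^2/c}$ when $s$ is near $t$. Split at $s=t/2$. For $s>t/2$, dominate $k_s(\mathbf{z},\mathbf{y})$ by the Gaussian at scale $2^{-m}d(Q)^2$ and enlarge the $(t-s)$-range to $(0,2^{-m}d(Q)^2)$. Then integrate out $\mathbf{x}$, and use that \eqref{eq:K} implies $u\int_{Q^{***}}V(\mathbf{z})\mathcal{G}_{2u/c}(\mathbf{y},\mathbf{z})\,dw(\mathbf{z})\le C(u/d(Q)^2)^\delta$ for $u\le d(Q)^2$.
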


Our main result gives a Riesz transform characterization
of the same Hardy space under the same assumptions.
Here the transforms $\widetilde{R}_jf$ are understood
as the tempered distributions constructed in
Proposition~\ref{prop:Riesz_distributions}.
The statement $\widetilde{R}_jf\in L^1(dw)$ means that
this distribution is represented by a function in $L^1(dw)$,
with respect to the pairing
$\langle g,\phi\rangle=\int g\phi\,dw$.

\begin{theorem}\label{theo:main}
Assume that \eqref{eq:finite_overlap}, \eqref{eq:D},
and \eqref{eq:K} hold.
For $f\in L^1(dw)$, we have $f\in H_L^1$ if and only if
$\widetilde{R}_jf\in L^1(dw)$ for every $j=1,\ldots,N$.
Moreover, there is a constant $C>0$ such that
\[
    C^{-1}\|f\|_{H_L^1}
    \leq
    \|f\|_{L^1(dw)}
    +\sum_{j=1}^N\|\widetilde{R}_jf\|_{L^1(dw)}
    \leq C\|f\|_{H_L^1},
    \qquad f\in H_L^1.
\]
\end{theorem}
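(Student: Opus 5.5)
The proof follows the localization scheme of Dziubański--Preisner, with Theorem~\ref{theorem:main_prev} as the starting point. Fix a smooth partition of unity $\{\psi_Q\}_{Q\in\mathcal Q}$ subordinate to $\{Q^*\}$ with $\|\nabla\psi_Q\|_\infty\le Cd(Q)^{-1}$; this is possible by \eqref{eq:finite_overlap}. For each $Q$, we compare $\widetilde R_j$ on $Q^{**}$ with the local Dunkl Riesz transform at scale $d(Q)$ from~\cite{Hejna}, namely
\[
    R_{j,Q}^{\mathrm{loc}}f=\int_0^{d(Q)^2}T_{j,\mathbf{x}}H_tf\,\frac{dt}{\sqrt t}.
\]
By~\cite{Hejna} (after rescaling to $d(Q)=1$), $R^{\mathrm{loc}}_{j,Q}$ characterizes the local Hardy space $h^1_{d(Q)}$. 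This space has the atomic description by atoms as in Definition~\ref{def:atomic} with radius bounded by $d(Q)$ and cancellation required only below scale $d(Q)$. The constants are uniform in $Q$ by dilation invariance of the Dunkl structure.

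The key estimates are the following three.

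\emph{(i) Local comparison:} the kernel of $\widetilde R_{j,Q,0}-R^{\mathrm{loc}}_{j,Q}$ satisfies
\[
    \sup_{\mathbf y\in Q^{**}}\int\Bigl|\int_0^{d(Q)^2}T_{j,\mathbf x}\bigl(h_t-k_t\bigr)(\mathbf x,\mathbf y)\frac{dt}{\sqrt t}\Bigr|\,dw(\mathbf x)\le C.
\]
To prove it, differentiate the perturbation formula \eqref{eq:perturbation_formula} in $\mathbf x$, using \eqref{eq:heat_derivative} and Theorem~\ref{theorem:heat} for $T_{j,\mathbf x}h_{t-s}$. Then use \eqref{eq:K} to gain the factor $(t/d(Q)^2)^\delta$, which makes the integral $\int_0^{d(Q)^2}t^{-1/2}\cdot t^{-1/2}(t/d(Q)^2)^{\delta}\,dt$ converge. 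The singularity of $T_{j,\mathbf x}h_{t-s}$ as $s\to t$ has to be handled by splitting at $s=t/2$; on the piece $s>t/2$ I would instead use the weighted $L^2$ bound \eqref{eq:kernel_derivative_L2} together with Cauchy--Schwarz against a Gaussian weight $\exp(\|\mathbf x-\mathbf y\|^2/ct)$. This requires a weighted version of Lemma~\ref{lem:L_on_k_t}, proved by a Davies-type perturbation of the form. Here Euclidean distance, not orbit distance, must be retained in order to localize $\mathbf x$ to $Q^{**}$.

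\emph{(ii) Far part:} prove $\|\widetilde R_{j,Q,\infty}f\|_{L^1}\le C\|f\|_{L^1}$ for $f$ supported in $Q^{**}$. Write $T_{j}k_t=T_jK_{t/2}k_{t/2}$, apply \eqref{eq:kernel_derivative_L2} with Cauchy--Schwarz on dyadic Gaussian annuli, and use \eqref{eq:D} through $k_{t/2}$ to get a summable factor over $t\in[2^nd(Q)^2,2^{n+1}d(Q)^2]$. The bound \eqref{eq:D} gives only $n^{-1-\eta}$, which is just enough, so the square-root loss from Cauchy--Schwarz must be avoided. I would split the time as $t=2^{n-1}d(Q)^2+2^{n-1}d(Q)^2$, put the derivative on the first factor (with an $L^1$ gradient bound of size $t^{-1/2}$), and put the mass decay on the second.

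\emph{(iii) Commutator:} prove that $\sum_Q\|\psi_Q\widetilde R_{j}f-\widetilde R_{j}(\psi_Qf)\|_{L^1(Q^{**})}\le C\|f\|_{L^1}$, and similarly for the terms outside $Q^{**}$. This follows from (i), (ii), and off-diagonal bounds of the kernel for $\|\mathbf x-\mathbf y\|\gtrsim d(Q)$.

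With (i)--(iii) in hand, the argument runs in two directions.

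($\Rightarrow$) By Theorem~\ref{theorem:main_prev}, it suffices to show $\|\widetilde R_ja\|_{L^1}\le C$ for every $\mathcal Q$-atom $a$ associated with $Q$. On $Q^{*****}$ we write $\widetilde R_j=R^{\mathrm{loc}}_{j,Q}+(\widetilde R_{j,Q,0}-R^{\mathrm{loc}}_{j,Q})+\widetilde R_{j,Q,\infty}$. The first term is bounded since $a$ is a local atom, the second by (i), and the third by (ii). Away from $Q^{*****}$, the small-time kernel decays in Euclidean distance by the weighted estimates, and the large-time part is again handled by (ii).

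($\Leftarrow$) Given $f$ with $\widetilde R_jf\in L^1$, we have
\[
    R^{\mathrm{loc}}_{j,Q}(\psi_Qf)=\psi_Q\widetilde R_jf+\text{error},
\]
and (i)--(iii) give $\sum_Q\|\text{error}\|_{L^1}\le C\|f\|_{L^1}$. Hence
\[
    \sum_Q\|\psi_Qf\|_{h^1_{d(Q)}}\le C\Bigl(\|f\|_{L^1}+\sum_j\|\widetilde R_jf\|_{L^1}\Bigr).
\]
Local atoms supported near $Q^{*}$ are $\mathcal Q$-atoms, so this bounds the atomic norm of $f$, and Theorem~\ref{theorem:main_prev} concludes. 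All identities are first verified for the truncations $\widetilde R^\varepsilon$ and then passed to the limit in $\mathcal S'$ via Proposition~\ref{prop:Riesz_distributions}.

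The main obstacle is (i): obtaining $L^1$ bounds for the weak Dunkl derivative of $h_t-k_t$, since only $L^2$ energy bounds for $T_jk_t$ are available and the reflection terms of $T_j$ destroy pointwise localization.
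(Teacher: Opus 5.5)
\textbf{There is a genuine gap: the Euclidean off-diagonal control of $T_{j,\mathbf x}k_t$ rests on a weighted estimate that is false in the Dunkl setting.}

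Your overall architecture matches the paper's. That includes the partition of unity, the comparison with the local Dunkl Riesz transforms at scale $d(Q)$ via the differentiated perturbation formula and \eqref{eq:K}, the large-time splitting $k_t=K_{t-a_n/2}k_{a_n/2}$ combined with \eqref{eq:D}, the commutators, and Theorem~\ref{theorem:main_prev} in both directions.

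Steps (i), (iii), and your bound for atoms away from $Q^{*****}$ all need $L^1$ control of $T_{j,\mathbf x}k_t(\cdot,\mathbf y)$ away from $\mathbf y$ in Euclidean distance. You propose to get this from a Davies-type version of Lemma~\ref{lem:L_on_k_t} with weight $e^{\|\mathbf x-\mathbf y\|^2/ct}$. That estimate is false as soon as $k\not\equiv0$, already for $V\equiv0$.

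\emph{Why the method fails.} Conjugating $T_j$ by $e^{\psi}$ adds the term $\sum_\alpha\frac{k(\alpha)}{2}\alpha_j\bigl(1-e^{\psi(\mathbf x)-\psi(\sigma_\alpha\mathbf x)}\bigr)f(\sigma_\alpha\mathbf x)/\langle\alpha,\mathbf x\rangle$. For the linear or distance-type weights used in Davies' method this term is unbounded. Only $G$-invariant weights survive, and those detect only the orbit distance $d(\mathbf x,\mathbf y)$.

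\emph{Why the estimate itself is false.} Theorem~\ref{theorem:heat} is sharp along orbits. In rank one with multiplicity $k>0$, $h_t(-y,y)\asymp t^{1/2}|y|^{-2k-2}$ for $|y|\gg\sqrt t$. Near $\mathbf x=\sigma_\alpha\mathbf y$ the weight is therefore uncompensated, and $\int|T_{j,\mathbf x}h_t(\mathbf x,\mathbf y)|^2e^{\|\mathbf x-\mathbf y\|^2/ct}\,dw(\mathbf x)$ is not $O\bigl(t^{-1}w(B(\mathbf y,\sqrt t))^{-1}\bigr)$ uniformly in $\mathbf y$.

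\emph{The missing idea.} The only Euclidean information available is the factor $(1+\|\mathbf x-\mathbf y\|/\sqrt t)^{-2}$. It gives the first moment $\int\|\mathbf x-\mathbf z\||T_{j,\mathbf x}h_u(\mathbf x,\mathbf z)|\,dw(\mathbf x)\le C$, and this has to be transferred to $k_t$. The paper does it as follows.
\begin{enumerate}
\item It integrates the perturbation formula against $W_T(\mathbf z,\mathbf y)=(1+\|\mathbf z-\mathbf y\|/\sqrt T)^2$, split by $W_T(\mathbf z,\mathbf y)\le2W_T(\mathbf x,\mathbf y)+2W_T(\mathbf x,\mathbf z)$. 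This gives Lemma~\ref{lem:better_VK}.
\item The differentiated formula then gives the time-integrated first-moment bound \eqref{eq:weight_schrodinger}.
\item From \eqref{eq:weight_schrodinger} come the off-diagonal bound (Corollary~\ref{coro:outside}) and the commutator bound, via the Lipschitz bound on $\phi_Q$.
\item Together with Cauchy--Schwarz on $B(\mathbf y,\sqrt t)$ only, it also gives the dyadic-time bound of Corollary~\ref{coro:logarithm}. That bound replaces your pointwise ``$L^1$ gradient bound of size $t^{-1/2}$'' in (ii).
\end{enumerate}

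\emph{Two further problems in (i).}
\begin{itemize}
\item Condition \eqref{eq:K} controls only $V\mathbf 1_{Q^{***}}$. The part of $V$ outside $Q^{***}$ is untreated in your sketch. It needs exactly this Euclidean separation: $\|\mathbf x-\mathbf z\|\ge d(Q)$ for $\mathbf x\in Q^{**}$, together with Corollary~\ref{coro:outside} for $\mathbf x\notin Q^{**}$.
\item The piece $s>t/2$ needs no $L^2$ argument. In the differentiated formula the derivative falls on $h_{t-s}$, and $\|T_{j,\mathbf x}h_u(\cdot,\mathbf z)\|_{L^1(dw)}\le Cu^{-1/2}$ is integrable at $u=0$. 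After exchanging the $s$- and $t$-integrals, you use $\int_s^{2s}\frac{dt}{\sqrt{(t-s)t}}\le C$ and \eqref{eq:VK_kernel}.
\end{itemize}

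\emph{A missing step in ($\Leftarrow$).} The atoms in an $H^1_{\mathrm{loc},d(Q)}$ decomposition of $\psi_Qf$ need not be supported near $Q^*$.
\begin{itemize}
\item You must multiply each atom by a cutoff $\chi$ equal to $1$ on $Q^*$ and supported in $Q^{**}$.
\item This destroys cancellation, which you then restore, for example with a telescoping sum of normalized indicators of dyadically growing balls.
\item The bound $|\int\chi a\,dw|\le Cr/d(Q)$ absorbs the $\log(d(Q)/r)$ terms of that sum.
\end{itemize}
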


\begin{corollary}\label{coro:reverse_Holder}
Let $V\geq0$, $V\not\equiv0$, and
$V\in L^2_{\mathrm{loc}}(dw)\cap\mathrm{RH}^q(dw)$,
where
\[
    q>\max\left\{1,\frac{\mathbf N}{2}\right\}.
\]
Then, for $L=-\Delta_k+V$, the characterization and
the norm equivalence in Theorem~\ref{theo:main} hold.
\end{corollary}

\begin{proof}
By~\cite[Section 10]{main}, the covering associated
with $V$ satisfies~\eqref{eq:finite_overlap},
\eqref{eq:D}, and~\eqref{eq:K}.
The conclusion follows from Theorem~\ref{theo:main}.
\end{proof}

\section{Kernel estimates}

We first recall the following standard estimate,
proved in the Dunkl setting in~\cite[Lemma~16]{main}.

\begin{lemma}\label{lem:VK}
For every $f\in L^1(dw)$,
\begin{equation}\label{eq:VK}
    \int_{\mathbb{R}^N}\int_0^\infty
    V(\mathbf{x})K_s|f|(\mathbf{x})
    \,ds\,dw(\mathbf{x})
    \leq \|f\|_{L^1(dw)}.
\end{equation}
Consequently,
\begin{equation}\label{eq:VK_kernel}
    \int_0^\infty\int_{\mathbb{R}^N}
    V(\mathbf{z})k_s(\mathbf{z},\mathbf{y})
    \,dw(\mathbf{z})\,ds
    \leq1
\end{equation}
for $dw$-almost every $\mathbf{y}\in\mathbb{R}^N$.
\end{lemma}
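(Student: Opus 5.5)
The statement to prove is Lemma~\ref{lem:VK}, and I will base it on the perturbation formula~\eqref{eq:perturbation_formula} together with the mass identity~\eqref{eq:h_integral_1}. By positivity it suffices to treat $f\geq0$; replacing $f$ by $|f|$ costs nothing. Fix $t>0$ and integrate~\eqref{eq:perturbation_formula} in $\mathbf{x}$ over $\mathbb{R}^N$. All integrands are nonnegative, so Tonelli's theorem applies. Using $\int h_{t-s}(\mathbf{x},\mathbf{z})\,dw(\mathbf{x})=1$, this gives, for almost every $\mathbf{y}$,
\[
    1-\int_{\mathbb{R}^N}k_t(\mathbf{x},\mathbf{y})\,dw(\mathbf{x})
    =
    \int_0^t\int_{\mathbb{R}^N}
    V(\mathbf{z})k_s(\mathbf{z},\mathbf{y})\,dw(\mathbf{z})\,ds .
\]
Since $k_t\geq0$, the right-hand side is at most $1$ for every $t$. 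Taking $t=n\to\infty$ along integers (a countable family, so the exceptional null sets can be united) and applying monotone convergence in $t$ yields~\eqref{eq:VK_kernel}.

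For~\eqref{eq:VK}, write $K_s f(\mathbf{z})=\int k_s(\mathbf{z},\mathbf{y})f(\mathbf{y})\,dw(\mathbf{y})$ and apply Tonelli to the nonnegative triple integral:
\[
    \int_{\mathbb{R}^N}\int_0^\infty V(\mathbf{z})K_sf(\mathbf{z})\,ds\,dw(\mathbf{z})
    =
    \int_{\mathbb{R}^N}f(\mathbf{y})
    \int_0^\infty\int_{\mathbb{R}^N}V(\mathbf{z})k_s(\mathbf{z},\mathbf{y})\,dw(\mathbf{z})\,ds\,dw(\mathbf{y}).
\]
By~\eqref{eq:VK_kernel} the inner double integral is at most $1$ for almost every $\mathbf{y}$, so the whole expression is bounded by $\|f\|_{L^1(dw)}$. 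The paper states~\eqref{eq:VK} first and derives~\eqref{eq:VK_kernel} from it. I find it cleaner to establish the kernel bound directly, though one could also obtain the kernel version from~\eqref{eq:VK} by testing against approximate identities $f=\mathbf{1}_{B(\mathbf{y}_0,r)}/w(B(\mathbf{y}_0,r))$ and applying Lebesgue differentiation in the doubling space $(\mathbb{R}^N,dw)$.

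The main obstacle is measure-theoretic bookkeeping rather than analysis. The perturbation identity holds only almost everywhere in $(\mathbf{x},\mathbf{y})$ for each fixed $t$, and we need joint measurability of $(s,\mathbf{z},\mathbf{y})\mapsto k_s(\mathbf{z},\mathbf{y})$ to apply Tonelli. I would handle this by using the jointly measurable version of $k_t$ fixed in Section~\ref{sec:Schorodinger}, if necessary defining it via $K_s$ applied to a countable dense family and using continuity in $s$. I would also restrict to countably many $t$, so that a single null set of $\mathbf{y}$ works for all of them.
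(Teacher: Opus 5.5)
Your proof is correct, but it runs in the opposite direction from the paper.

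The paper does not prove \eqref{eq:VK} itself; it imports it from \cite[Lemma~16]{main}. It then deduces \eqref{eq:VK_kernel} by duality. By Tonelli, the left side of \eqref{eq:VK} equals $\int |f(\mathbf{y})|\,g(\mathbf{y})\,dw(\mathbf{y})$, where $g(\mathbf{y})=\int_0^\infty\int V(\mathbf{z})k_s(\mathbf{z},\mathbf{y})\,dw(\mathbf{z})\,ds$. Since this is at most $\|f\|_{L^1(dw)}$ for every $f$, testing with indicators of bounded pieces of $\{g>1\}$ forces $g\le 1$ almost everywhere. The Lebesgue differentiation argument in your aside also works, but it is not needed.

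You instead prove the kernel bound first, by integrating \eqref{eq:perturbation_formula} in $\mathbf{x}$ and using \eqref{eq:h_integral_1}. This gives the exact mass-defect identity $1-\int k_t(\mathbf{x},\mathbf{y})\,dw(\mathbf{x})=\int_0^t\int V(\mathbf{z})k_s(\mathbf{z},\mathbf{y})\,dw(\mathbf{z})\,ds$. You then read the same Tonelli computation backwards to obtain \eqref{eq:VK}.

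What each route buys:
\begin{itemize}
\item Your route is self-contained given the preliminaries. It also gives an identity at every finite time, not just an inequality. The price is null-set bookkeeping, which you handle correctly by restricting to $t\in\mathbb{N}$ and using monotone convergence.
\item The paper's route is shorter but rests entirely on the citation.
\end{itemize}
The paper attributes \eqref{eq:perturbation_formula} to the proof of that same cited lemma, so your argument plausibly reconstructs the source proof.

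The joint measurability in $(s,\mathbf{z},\mathbf{y})$ that you flag is also tacitly needed by the paper. It is used in the paper's Tonelli step and already in the statement of \eqref{eq:perturbation_formula}. So it is not a gap specific to your proposal.
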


Indeed, by Tonelli's theorem, the left-hand side
of~\eqref{eq:VK} equals
\[
    \int_{\mathbb{R}^N}|f(\mathbf{y})|
    \left(
        \int_0^\infty\int_{\mathbb{R}^N}
        V(\mathbf{z})k_s(\mathbf{z},\mathbf{y})
        \,dw(\mathbf{z})\,ds
    \right)\,dw(\mathbf{y}).
\]
Since~\eqref{eq:VK} holds for every $f\in L^1(dw)$,
it implies~\eqref{eq:VK_kernel}.

We shall need the following weighted version of this estimate.

\begin{lemma}\label{lem:better_VK}
There is a constant $C>0$, depending only on the Dunkl
structure, such that, for every $T>0$ and $dw$-almost every
$\mathbf{y}\in\mathbb{R}^N$,
\begin{equation}\label{eq:weighted_VK}
    \int_0^T\int_{\mathbb{R}^N}
    \left(1+\frac{\|\mathbf{y}-\mathbf{z}\|}{\sqrt{T}}\right)^2
    V(\mathbf{z})k_s(\mathbf{z},\mathbf{y})
    \,dw(\mathbf{z})\,ds
    \leq C.
\end{equation}
\end{lemma}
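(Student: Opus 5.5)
The plan is to test the perturbation formula~\eqref{eq:perturbation_formula} against the quadratic weight $\|\mathbf{x}-\mathbf{y}\|^2$ in the variable $\mathbf{x}$. This uses the fact that the Dunkl heat semigroup acts on quadratic polynomials in an explicit way. Since $(1+a)^2\le 2+2a^2$, Lemma~\ref{lem:VK} already controls the unweighted part of~\eqref{eq:weighted_VK} by $2$. It therefore suffices to show
\[
    \int_0^T\int_{\mathbb{R}^N}
    \|\mathbf{z}-\mathbf{y}\|^2
    V(\mathbf{z})k_s(\mathbf{z},\mathbf{y})
    \,dw(\mathbf{z})\,ds
    \leq 2\mathbf{N}\,T
\]
for almost every $\mathbf{y}$.

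\emph{Step 1 (moment identity).} Fix $\mathbf{y}$ and set $p_{\mathbf{y}}(\mathbf{x})=\|\mathbf{x}-\mathbf{y}\|^2$. By~\eqref{eq:T_def}, each $T_j$ sends linear functions to constants, so $\Delta_k$ annihilates linear functions. A direct computation gives $\Delta_k\|\mathbf{x}\|^2=2\mathbf{N}$, with $\mathbf{N}$ as in~\eqref{eq:homo}. Hence $\Delta_k p_{\mathbf{y}}=2\mathbf{N}$, and $\Delta_k^2p_{\mathbf{y}}=0$. I would then show
\[
    \int_{\mathbb{R}^N}h_t(\mathbf{x},\mathbf{z})\,p_{\mathbf{y}}(\mathbf{x})\,dw(\mathbf{x})
    =p_{\mathbf{y}}(\mathbf{z})+2\mathbf{N}t .
\]
One route is R\"osler's formula $e^{t\Delta_k}p=\sum_n t^n\Delta_k^np/n!$ for polynomials. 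Alternatively, differentiate in $t$ using $\partial_th_t=\Delta_kh_t$ and integrate by parts~\eqref{eq:Dunkl_integration_by_parts}. The boundary terms vanish because the Gaussian bound~\eqref{eq:Gauss} gives rapid decay of $h_t(\cdot,\mathbf{z})$ and its derivatives. Justifying this integration by parts, or citing the moment formula cleanly, is the only genuinely technical step. Note that the bound in Theorem~\ref{theorem:heat} alone does not even give a finite second Euclidean moment, because $\mathcal{G}$ only sees orbit distance. So the exact identity, or a Schwartz-type decay of $h_t(\cdot,\mathbf{z})$ from R\"osler's explicit formula, is genuinely needed.

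\emph{Step 2 (integrate the perturbation formula).} For almost every $\mathbf{y}$, the identity~\eqref{eq:perturbation_formula} holds for almost every $\mathbf{x}$. Multiply it by $p_{\mathbf{y}}(\mathbf{x})\geq0$ and integrate in $\mathbf{x}$. Tonelli's theorem applies since all integrands are nonnegative. On the left, drop $k_T\geq0$ and use Step 1 with $\mathbf{z}=\mathbf{y}$ to get at most $2\mathbf{N}T$. On the right, Step 1 turns the inner integral into $p_{\mathbf{y}}(\mathbf{z})+2\mathbf{N}(T-s)\geq\|\mathbf{z}-\mathbf{y}\|^2$. Hence
\[
    \int_0^T\int_{\mathbb{R}^N}\|\mathbf{z}-\mathbf{y}\|^2V(\mathbf{z})k_s(\mathbf{z},\mathbf{y})\,dw(\mathbf{z})\,ds
    \le \int_{\mathbb{R}^N}h_T(\mathbf{x},\mathbf{y})\|\mathbf{x}-\mathbf{y}\|^2\,dw(\mathbf{x})=2\mathbf{N}T .
\]
Here the perturbation formula is applied with $t=T$, and the $s$-integral runs over $(0,T)$, matching the left side of~\eqref{eq:weighted_VK}.

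\emph{Step 3 (conclusion).} Divide the bound from Step 2 by $T$. Combine it with~\eqref{eq:VK_kernel} via $(1+a)^2\le2+2a^2$, where $a=\|\mathbf{y}-\mathbf{z}\|/\sqrt{T}$. This gives~\eqref{eq:weighted_VK} with $C=2+4\mathbf{N}$, which depends only on the Dunkl structure. The exceptional null set of $\mathbf{y}$ may depend on $T$. To get a single null set, one can restrict to rational $T$ and use monotonicity: for $T\le T'$ with $T'\le 2T$, the integrand for $T$ is at most $4$ times the integrand for $T'$, so any $T$ is controlled by a nearby rational $T'$. This at worst enlarges $C$ by a fixed factor. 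The main obstacle is Step 1, specifically the rigorous justification of the second-moment identity for the Dunkl heat kernel.
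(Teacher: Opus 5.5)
Your proof is correct, and it shares the paper's skeleton. Both arguments test \eqref{eq:perturbation_formula} at time $T$ against a weight in $\mathbf{x}$, discard $k_T\geq0$, apply Tonelli's theorem, and handle the unweighted part with \eqref{eq:VK_kernel}. The difference lies in how the weight in $\mathbf{z}$ is recovered from the weight in $\mathbf{x}$.

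\textbf{Your route.} You use the exact identity $\int h_t(\mathbf{x},\mathbf{z})\|\mathbf{x}-\mathbf{y}\|^2\,dw(\mathbf{x})=\|\mathbf{z}-\mathbf{y}\|^2+2\mathbf{N}t$. This gives the explicit constant $2+4\mathbf{N}$. It does require an exact moment computation, including the vanishing first moment that kills the cross term. The quickest justification is \eqref{eq:heat_derivative}, which gives $\|\mathbf{x}-\mathbf{z}\|^2h_t(\mathbf{x},\mathbf{z})=-2t\sum_j(x_j-z_j)T_{j,\mathbf{x}}h_t(\mathbf{x},\mathbf{z})$. Then integrate by parts with radial cutoffs, using $T_j(x_j-z_j)=1+\sum_{\alpha\in R}\frac{k(\alpha)}{2}\alpha_j^2$.

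\textbf{The paper's route.} The paper avoids any exact computation. It averages $W_T(\mathbf{z},\mathbf{y})\leq 2W_T(\mathbf{x},\mathbf{y})+2W_T(\mathbf{x},\mathbf{z})$ against the probability density $h_{T-s}(\cdot,\mathbf{z})$ with respect to $dw$. So it needs only the normalization \eqref{eq:h_integral_1} and the upper bound \eqref{eq:heat_weighted_moment}. The price is a factor $2$ and an extra term, which \eqref{eq:heat_weighted_moment} and \eqref{eq:VK_kernel} bound by a constant.

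Your argument is sharper and exploits the algebraic fact that $\Delta_k$ maps quadratics to constants. The paper's is cruder but works for any conservative kernel satisfying such upper bounds.

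Your side remark that Theorem~\ref{theorem:heat} does not even give a finite second Euclidean moment is mistaken. The prefactor $(1+\|\mathbf{x}-\mathbf{y}\|/\sqrt{t})^{-2}$ in \eqref{eq:Gauss} gives $\|\mathbf{x}-\mathbf{z}\|^2h_t(\mathbf{x},\mathbf{z})\leq Ct\,\mathcal{G}_{t/c}(\mathbf{x},\mathbf{z})$. Hence \eqref{eq:G_integral} yields $\int\|\mathbf{x}-\mathbf{z}\|^2h_t(\mathbf{x},\mathbf{z})\,dw(\mathbf{x})\leq Ct$, which is exactly the paper's input. Even $\mathcal{G}$ alone gives finiteness, since $d(\mathbf{x},\mathbf{z})\geq\|\mathbf{x}\|-\|\mathbf{z}\|$. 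So the exact identity is not genuinely needed. What an upper bound cannot supply is your lower bound $\int h_{T-s}(\mathbf{x},\mathbf{z})\|\mathbf{x}-\mathbf{y}\|^2\,dw(\mathbf{x})\geq\|\mathbf{z}-\mathbf{y}\|^2$; the triangle inequality is what replaces it. Finally, the rational-$T$ argument in Step~3 is not needed, because the lemma allows the null set to depend on $T$. It is harmless, though.
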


\begin{proof}
For brevity, set
\[
    W_T(\mathbf{x},\mathbf{y})
    =
    \left(1+\frac{\|\mathbf{x}-\mathbf{y}\|}{\sqrt{T}}\right)^2.
\]
Theorem~\ref{theorem:heat} and the uniform bound
$\int_{\mathbb{R}^N}\mathcal{G}_{u/c}(\mathbf{x},\mathbf{z})
\,dw(\mathbf{x})\leq C$ imply
\[
    \int_{\mathbb{R}^N}
    W_u(\mathbf{x},\mathbf{z})h_u(\mathbf{x},\mathbf{z})
    \,dw(\mathbf{x})\leq C.
\]
Since $W_T\leq W_u$ whenever $0<u\leq T$, we have
\begin{equation}\label{eq:heat_weighted_moment}
    \int_{\mathbb{R}^N}
    W_T(\mathbf{x},\mathbf{z})h_u(\mathbf{x},\mathbf{z})
    \,dw(\mathbf{x})\leq C,
    \qquad 0<u\leq T.
\end{equation}

Fix $T>0$. By~\eqref{eq:perturbation_formula} and the positivity of $k_T$,
\begin{equation}\label{eq:pert_cons}
    \int_0^T\int_{\mathbb{R}^N}
    h_{T-s}(\mathbf{x},\mathbf{z})
    V(\mathbf{z})k_s(\mathbf{z},\mathbf{y})
    \,dw(\mathbf{z})\,ds
    \leq h_T(\mathbf{x},\mathbf{y}).
\end{equation}
Moreover, the triangle inequality gives
\begin{equation}\label{eq:reverse_triangle}
    W_T(\mathbf{z},\mathbf{y})
    \leq
    2W_T(\mathbf{x},\mathbf{y})
    +2W_T(\mathbf{x},\mathbf{z}).
\end{equation}
Multiply~\eqref{eq:reverse_triangle} by
\[
    h_{T-s}(\mathbf{x},\mathbf{z})
    V(\mathbf{z})k_s(\mathbf{z},\mathbf{y})
\]
and integrate in $\mathbf{x}$, $\mathbf{z}$, and $s\in(0,T)$.
Using Tonelli's theorem, the normalization of $h_{T-s}$,
\eqref{eq:pert_cons}, and~\eqref{eq:heat_weighted_moment},
we obtain
\[
\begin{aligned}
    &\int_0^T\int_{\mathbb{R}^N}
    W_T(\mathbf{z},\mathbf{y})
    V(\mathbf{z})k_s(\mathbf{z},\mathbf{y})
    \,dw(\mathbf{z})\,ds
    \\
    &\quad\leq
    2\int_{\mathbb{R}^N}
    W_T(\mathbf{x},\mathbf{y})h_T(\mathbf{x},\mathbf{y})
    \,dw(\mathbf{x})
    \\
    &\qquad\quad+
    2\int_0^T\int_{\mathbb{R}^N}
    \left(
        \int_{\mathbb{R}^N}
        W_T(\mathbf{x},\mathbf{z})
        h_{T-s}(\mathbf{x},\mathbf{z})\,dw(\mathbf{x})
    \right)
    V(\mathbf{z})k_s(\mathbf{z},\mathbf{y})
    \,dw(\mathbf{z})\,ds
    \\
    &\quad\leq
    C+C\int_0^T\int_{\mathbb{R}^N}
    V(\mathbf{z})k_s(\mathbf{z},\mathbf{y})
    \,dw(\mathbf{z})\,ds
    \leq C,
\end{aligned}
\]
where the last inequality follows from~\eqref{eq:VK_kernel}.
\end{proof}

\begin{lemma}\label{lem:weighted_kernel_derivatives}
There is a constant $C>0$, depending only on the Dunkl
structure, such that, for every $j\in\{1,\ldots,N\}$ and $r>0$,
\begin{equation}\label{eq:weight_heat}
    \int_0^{r^2}\int_{\mathbb{R}^N}
    \frac{\|\mathbf{x}-\mathbf{y}\|}{r}
    |T_{j,\mathbf{x}}h_t(\mathbf{x},\mathbf{y})|
    \,dw(\mathbf{x})\,\frac{dt}{\sqrt{t}}
    \leq C
\end{equation}
for every $\mathbf{y}\in\mathbb{R}^N$, and
\begin{equation}\label{eq:weight_schrodinger}
    \int_0^{r^2}\int_{\mathbb{R}^N}
    \frac{\|\mathbf{x}-\mathbf{y}\|}{r}
    |T_{j,\mathbf{x}}k_t(\mathbf{x},\mathbf{y})|
    \,dw(\mathbf{x})\,\frac{dt}{\sqrt{t}}
    \leq C
\end{equation}
for $dw$-almost every $\mathbf{y}\in\mathbb{R}^N$.
\end{lemma}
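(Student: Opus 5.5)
For \eqref{eq:weight_heat} we plan a direct computation from the explicit formula \eqref{eq:heat_derivative}. It gives
\[
    \frac{\|\mathbf{x}-\mathbf{y}\|}{r}
    |T_{j,\mathbf{x}}h_t(\mathbf{x},\mathbf{y})|
    \leq
    \frac{\|\mathbf{x}-\mathbf{y}\|^2}{2rt}\,h_t(\mathbf{x},\mathbf{y}).
\]
Theorem~\ref{theorem:heat} controls the quadratic weight: $\|\mathbf{x}-\mathbf{y}\|^2/t\leq(1+\|\mathbf{x}-\mathbf{y}\|/\sqrt t)^2$, and this factor is absorbed by the decay factor in \eqref{eq:Gauss}. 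Hence the inner integral is at most $C r^{-1}\int\mathcal{G}_{t/c}(\mathbf{x},\mathbf{y})\,dw(\mathbf{x})\leq C/r$ by \eqref{eq:G_integral}. Integrating against $dt/\sqrt t$ over $(0,r^2)$ gives $C r^{-1}\cdot 2r=2C$. This part is routine.

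For \eqref{eq:weight_schrodinger} we would use the perturbation formula \eqref{eq:perturbation_formula} and write $k_t=h_t-\int_0^t\int h_{t-s}(\cdot,\mathbf{z})V(\mathbf{z})k_s(\mathbf{z},\mathbf{y})\,dw(\mathbf{z})\,ds$. Taking the weak Dunkl derivative in $\mathbf{x}$ (justified by pairing with $\varphi\in C_c^\infty$ and Fubini, since everything is nonnegative and integrable, and using that $T_j$ acts only on the $h_{t-s}$ factor), we get
\[
    |T_{j,\mathbf{x}}k_t(\mathbf{x},\mathbf{y})|
    \leq |T_{j,\mathbf{x}}h_t(\mathbf{x},\mathbf{y})|
    +\int_0^t\int_{\mathbb{R}^N}|T_{j,\mathbf{x}}h_{t-s}(\mathbf{x},\mathbf{z})|V(\mathbf{z})k_s(\mathbf{z},\mathbf{y})\,dw(\mathbf{z})\,ds,
\]
with the derivative identified as an $L^1_{\mathrm{loc}}$ function. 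The first term is handled by \eqref{eq:weight_heat}. For the second, split the weight by the triangle inequality, $\|\mathbf{x}-\mathbf{y}\|\leq\|\mathbf{x}-\mathbf{z}\|+\|\mathbf{z}-\mathbf{y}\|$, and apply Tonelli in the order $\mathbf{x}$, then $t$, then $(s,\mathbf{z})$. With $u=t-s$, the $t$-integral over $(s,r^2)$ of $(t)^{-1/2}$ is dominated by the integral in $u$ of $u^{-1/2}$, since $t\geq u$.

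\emph{Piece with $\|\mathbf{x}-\mathbf{z}\|/r$.} By \eqref{eq:weight_heat} applied at the point $\mathbf{z}$, the $\mathbf{x},u$-integral is bounded by $C$ uniformly in $\mathbf{z}$. What remains is $C\int_0^{r^2}\int V k_s\,dw\,ds\leq C$ by \eqref{eq:VK_kernel}.

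\emph{Piece with $\|\mathbf{z}-\mathbf{y}\|/r$.} The bound $\int|T_{j,\mathbf{x}}h_u(\mathbf{x},\mathbf{z})|\,dw(\mathbf{x})\leq C u^{-1/2}$ follows as in the first part, using $\|\mathbf{x}-\mathbf{z}\|/u\leq u^{-1/2}\,(1+\|\mathbf{x}-\mathbf{z}\|/\sqrt u)$. After integrating in $t$, with $\int_0^{r^2-s}u^{-1/2}\,du\cdot$ the leftover $t^{-1/2}\leq u^{-1/2}$, this produces $\int_0^{r^2}u^{-1}\,du$, which diverges logarithmically.

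\emph{Main obstacle: this divergence.} The fix is to keep the $t^{-1/2}$ tied to $s$ rather than $u$. The $t$-integral is $\int_s^{r^2}(t-s)^{-1/2}t^{-1/2}\,dt$. This is not uniformly bounded either; it grows like $\log(r^2/s)$. So we would instead split at $u\leq s$ versus $u>s$.

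For $u>s$ we have $t\asymp u$, and we use the pointwise bound $|T_jh_u|\leq C u^{-1/2}(1+\|\mathbf{x}-\mathbf{z}\|/\sqrt u)^{-1}\mathcal{G}_{u/c}$, together with the spare decay to trade for the weight. Alternatively, for $u\leq s$, $t\asymp s$ gives the $\mathbf{x}$-integral times $\int_0^{s}u^{-1/2}\,du\,s^{-1/2}\leq C$.

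The genuinely delicate region $u>s$ costs $\int_s^{r^2}du/u=\log(r^2/s)$. This logarithm we hope to beat with Lemma~\ref{lem:better_VK}, whose weight $(1+\|\mathbf{y}-\mathbf{z}\|/\sqrt{T})^2$ is quadratic, leaving room to spare. Concretely, apply Lemma~\ref{lem:better_VK} dyadically with $T=2^{-m}r^2$ on $s\in(2^{-m-1}r^2,2^{-m}r^2]$. There the log is about $m$, but $\|\mathbf{z}-\mathbf{y}\|/r\leq 2^{-m/2}\,\|\mathbf{z}-\mathbf{y}\|/\sqrt{T}\leq 2^{-m/2}\,(1+\|\mathbf{z}-\mathbf{y}\|/\sqrt T)^2$, so the dyadic block contributes at most $C\,m\,2^{-m/2}$. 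Summing over $m\geq0$ gives a finite constant and completes the estimate.
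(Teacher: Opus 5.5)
Your argument is correct and is essentially the paper's proof (differentiate the perturbation formula, split the weight through $\mathbf{z}$, split the $\|\mathbf{z}-\mathbf{y}\|$ part at $s=t/2$, and use Lemma~\ref{lem:better_VK} at every scale $T\leq r^2$). The one difference is bookkeeping in the region $s<t/2$: you integrate in $t$ first and absorb the $\log(r^2/s)$ by dyadic blocks $T=2^{-m}r^2$, while the paper uses $\int_0^{t/2}\int\|\mathbf{z}-\mathbf{y}\|V(\mathbf{z})k_s(\mathbf{z},\mathbf{y})\,dw(\mathbf{z})\,ds\leq C\sqrt{t}$ directly; the two are the same computation after Fubini. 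One small correction: the Fubini step in differentiating the perturbation formula rests on absolute integrability after integrating in $t$ (so the identity holds for a.e.\ $t$, which suffices), not on positivity, since $T_{j,\mathbf{x}}h_{t-s}$ changes sign.
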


\begin{proof}
By~\eqref{eq:heat_derivative}
and~\eqref{eq:heat_weighted_moment}, for every $u>0$,
\begin{equation}\label{eq:heat_derivative_moments}
\begin{aligned}
    \int_{\mathbb{R}^N}
    |T_{j,\mathbf{x}}h_u(\mathbf{x},\mathbf{z})|
    \,dw(\mathbf{x})
    &\leq Cu^{-1/2},\qquad
    \int_{\mathbb{R}^N}
    \|\mathbf{x}-\mathbf{z}\|
    |T_{j,\mathbf{x}}h_u(\mathbf{x},\mathbf{z})|
    \,dw(\mathbf{x})\leq C.
\end{aligned}
\end{equation}
The second estimate immediately gives~\eqref{eq:weight_heat},
since
\[
    \frac{1}{r}\int_0^{r^2}\frac{dt}{\sqrt{t}}=2.
\]

We first justify differentiation of the perturbation formula.
By Tonelli's theorem, \eqref{eq:heat_derivative_moments},
and~\eqref{eq:VK_kernel}, for every $R>0$ and almost every
$\mathbf{y}$,
\[
\begin{aligned}
    &\int_0^R\int_0^t\int_{\mathbb{R}^N}
    \|T_{j,\mathbf{x}}h_{t-s}(\cdot,\mathbf{z})\|_{L^1(dw)}
    V(\mathbf{z})k_s(\mathbf{z},\mathbf{y})
    \,dw(\mathbf{z})\,ds\,dt\\
    &\qquad\leq
    C\sqrt{R}\int_0^R\int_{\mathbb{R}^N}
    V(\mathbf{z})k_s(\mathbf{z},\mathbf{y})
    \,dw(\mathbf{z})\,ds
    \leq C\sqrt{R}.
\end{aligned}
\]
Thus, testing~\eqref{eq:perturbation_formula} against
$\varphi\in C_c^\infty(\mathbb{R}^N)$ and using Fubini's
theorem and weak integration by parts, we obtain
\begin{equation}\label{eq:perturbation_derivative}
\begin{aligned}
    T_{j,\mathbf{x}}(h_t-k_t)(\mathbf{x},\mathbf{y})
    =
    \int_0^t\int_{\mathbb{R}^N}
    T_{j,\mathbf{x}}h_{t-s}(\mathbf{x},\mathbf{z})
    V(\mathbf{z})k_s(\mathbf{z},\mathbf{y})
    \,dw(\mathbf{z})\,ds
\end{aligned}
\end{equation}
for almost every $(t,\mathbf{x},\mathbf{y})$.

Fix $r>0$. Set
\[
    A(s,\mathbf{y})
    =\int_{\mathbb{R}^N}
    V(\mathbf{z})k_s(\mathbf{z},\mathbf{y})\,dw(\mathbf{z}),
    \qquad
    B(s,\mathbf{y})
    =\int_{\mathbb{R}^N}
    \|\mathbf{z}-\mathbf{y}\|
    V(\mathbf{z})k_s(\mathbf{z},\mathbf{y})\,dw(\mathbf{z}).
\]
By~\eqref{eq:VK_kernel} and Lemma~\ref{lem:better_VK},
\begin{equation}\label{eq:VK_first_moment}
    \int_0^\infty A(s,\mathbf{y})\,ds\leq1,
    \qquad
    \int_0^T B(s,\mathbf{y})\,ds\leq C\sqrt{T}.
\end{equation}
The second estimate holds for each $T>0$ and almost every
$\mathbf{y}$; by Fubini's theorem it may also be used for
almost every $T$ with $\mathbf{y}$ fixed outside a null set.

By~\eqref{eq:perturbation_derivative}, the triangle inequality,
and~\eqref{eq:heat_derivative_moments},
\[
\begin{aligned}
    &\int_0^{r^2}\int_{\mathbb{R}^N}
    \frac{\|\mathbf{x}-\mathbf{y}\|}{r}
    |T_{j,\mathbf{x}}(h_t-k_t)(\mathbf{x},\mathbf{y})|
    \,dw(\mathbf{x})\,\frac{dt}{\sqrt{t}}\\
    &\qquad\leq
    \frac{C}{r}\int_0^{r^2}\int_0^t
    \left(
        A(s,\mathbf{y})
        +\frac{B(s,\mathbf{y})}{\sqrt{t-s}}
    \right)\,ds\,\frac{dt}{\sqrt{t}}.
\end{aligned}
\]
The contribution of $A$ is bounded by
\[
    \frac{C}{r}\int_0^{r^2}\frac{dt}{\sqrt{t}}\leq C.
\]
For the contribution of $B$ with $0<s<t/2$, we use
$t-s\geq t/2$ and~\eqref{eq:VK_first_moment} to obtain
\[
    \frac{C}{r}\int_0^{r^2}
    \left(\int_0^{t/2}B(s,\mathbf{y})\,ds\right)
    \frac{dt}{t}
    \leq
    \frac{C}{r}\int_0^{r^2}\frac{dt}{\sqrt{t}}
    \leq C.
\]
For the remaining contribution, Tonelli's theorem gives
\[
\begin{aligned}
    &\frac{C}{r}\int_0^{r^2}\int_{t/2}^t
    \frac{B(s,\mathbf{y})}{\sqrt{t-s}}
    \,ds\,\frac{dt}{\sqrt{t}}\\
    &\qquad=
    \frac{C}{r}\int_0^{r^2}B(s,\mathbf{y})
    \left(
        \int_s^{\min(r^2,2s)}
        \frac{dt}{\sqrt{t-s}\sqrt{t}}
    \right)\,ds
    \leq C,
\end{aligned}
\]
where we used
\begin{equation}\label{eq:square_root_integral}
    \int_s^{\min(r^2,2s)}
    \frac{dt}{\sqrt{t-s}\sqrt{t}}
    \leq
    \int_1^2\frac{du}{\sqrt{u-1}\sqrt{u}}
    <\infty,
    \qquad 0<s<r^2,
\end{equation}
and~\eqref{eq:VK_first_moment} with $T=r^2$.
Combining these estimates with~\eqref{eq:weight_heat}
proves~\eqref{eq:weight_schrodinger}.
\end{proof}

\begin{corollary}\label{coro:outside}
There is a constant $C>0$ such that, for every
$Q\in\mathcal{Q}$ and $j\in\{1,\ldots,N\}$,
\[
    \int_0^{d(Q)^2}
    \int_{\mathbb{R}^N\setminus Q^{**}}
    |T_{j,\mathbf{x}}h_t(\mathbf{x},\mathbf{y})|
    \,dw(\mathbf{x})\,\frac{dt}{\sqrt{t}}
    \leq C
\]
for every $\mathbf{y}\in Q^*$, and
\[
    \int_0^{d(Q)^2}
    \int_{\mathbb{R}^N\setminus Q^{**}}
    |T_{j,\mathbf{x}}k_t(\mathbf{x},\mathbf{y})|
    \,dw(\mathbf{x})\,\frac{dt}{\sqrt{t}}
    \leq C
\]
for $dw$-almost every $\mathbf{y}\in Q^*$.
\end{corollary}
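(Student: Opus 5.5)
The plan is to reduce Corollary~\ref{coro:outside} directly to Lemma~\ref{lem:weighted_kernel_derivatives} with $r=d(Q)$. The only thing to verify is that, for $\mathbf{y}\in Q^*$ and $\mathbf{x}\notin Q^{**}$, the Euclidean distance $\|\mathbf{x}-\mathbf{y}\|$ is bounded below by a fixed multiple of $d(Q)$. Once we have
\[
    1\leq C\frac{\|\mathbf{x}-\mathbf{y}\|}{d(Q)},
    \qquad \mathbf{y}\in Q^*,\quad \mathbf{x}\in\mathbb{R}^N\setminus Q^{**},
\]
we insert this factor into the integrands. The claimed bounds then follow from \eqref{eq:weight_heat} (for every $\mathbf{y}$) and from \eqref{eq:weight_schrodinger} (for almost every $\mathbf{y}$), after enlarging the domain of $\mathbf{x}$-integration from $\mathbb{R}^N\setminus Q^{**}$ to $\mathbb{R}^N$. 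All integrands are nonnegative, so this enlargement is harmless.

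For the geometric step, let $\mathbf{c}$ be the common center of the cubes. Then $Q^*$ has side length $2d(Q)$ and $Q^{**}$ has side length $4d(Q)$. For $\mathbf{y}\in Q^*$ we have $\|\mathbf{y}-\mathbf{c}\|_\infty\leq d(Q)$, and for $\mathbf{x}\notin Q^{**}$ we have $\|\mathbf{x}-\mathbf{c}\|_\infty>2d(Q)$. Hence
\[
    \|\mathbf{x}-\mathbf{y}\|\geq\|\mathbf{x}-\mathbf{y}\|_\infty>d(Q),
\]
so the constant above can be taken to be $C=1$.

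There is no real obstacle here. The only point needing care is the almost-everywhere quantifier in the Schr\"odinger case. Estimate \eqref{eq:weight_schrodinger} holds for each fixed $r$ outside a $\mathbf{y}$-null set, and here $r=d(Q)$ is fixed once $Q$ is fixed. Since $\mathcal{Q}$ is countable, we may also remove a single null set that works for all cubes simultaneously. The constant depends only on the Dunkl structure, uniformly in $Q$ and $j$.
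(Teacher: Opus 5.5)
Your proposal is correct and follows the paper's proof: the paper also uses $\|\mathbf{x}-\mathbf{y}\|>d(Q)$ for $\mathbf{y}\in Q^*$ and $\mathbf{x}\notin Q^{**}$, so that $\mathbf{1}_{\mathbb{R}^N\setminus Q^{**}}(\mathbf{x})\leq\|\mathbf{x}-\mathbf{y}\|/d(Q)$, and then applies \eqref{eq:weight_heat} and \eqref{eq:weight_schrodinger} with $r=d(Q)$. Your sup-norm verification of the geometric inequality and your remark on the null set (with $r=d(Q)$ fixed, and countably many cubes) spell out details the paper leaves implicit.
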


\begin{proof}
If $\mathbf{y}\in Q^*$ and $\mathbf{x}\notin Q^{**}$,
then $\|\mathbf{x}-\mathbf{y}\|>d(Q)$. Hence
\[
    \mathbf{1}_{\mathbb{R}^N\setminus Q^{**}}(\mathbf{x})
    \leq \frac{\|\mathbf{x}-\mathbf{y}\|}{d(Q)}.
\]
The assertions follow from~\eqref{eq:weight_heat}
and~\eqref{eq:weight_schrodinger} with $r=d(Q)$.
\end{proof}

\begin{corollary}\label{coro:logarithm}
There is a constant $C>0$ such that, for every $r>0$,
$j\in\{1,\ldots,N\}$, and $dw$-almost every
$\mathbf{y}\in\mathbb{R}^N$,
\begin{equation}\label{eq:logarithm}
\begin{aligned}
    &\int_{r^2}^{2r^2}\int_{\mathbb{R}^N}
    |T_{j,\mathbf{x}}k_{t-r^2/2}(\mathbf{x},\mathbf{y})|
    \,dw(\mathbf{x})\,\frac{dt}{\sqrt{t}}\leq
    \int_{r^2/2}^{3r^2/2}\int_{\mathbb{R}^N}
    |T_{j,\mathbf{x}}k_t(\mathbf{x},\mathbf{y})|
    \,dw(\mathbf{x})\,\frac{dt}{\sqrt{t}}
    \leq C.
\end{aligned}
\end{equation}
\end{corollary}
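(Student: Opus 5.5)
The first inequality is a change of variables. Substituting $u=t-r^2/2$ maps $t\in(r^2,2r^2)$ onto $u\in(r^2/2,3r^2/2)$. Since $t=u+r^2/2\geq u$, we have $t^{-1/2}\leq u^{-1/2}$, and the first inequality in~\eqref{eq:logarithm} follows by Tonelli's theorem.

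For the second inequality, I split $T_{j,\mathbf{x}}k_t=T_{j,\mathbf{x}}h_t-T_{j,\mathbf{x}}(h_t-k_t)$ and integrate over $t\in(r^2/2,3r^2/2)$.

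\emph{Heat-kernel term.} The first estimate in~\eqref{eq:heat_derivative_moments} gives $\|T_{j,\mathbf{x}}h_t(\cdot,\mathbf{y})\|_{L^1(dw)}\leq Ct^{-1/2}$. Hence
\[
\int_{r^2/2}^{3r^2/2}\|T_{j,\mathbf{x}}h_t(\cdot,\mathbf{y})\|_{L^1(dw)}\frac{dt}{\sqrt t}\leq C\int_{r^2/2}^{3r^2/2}\frac{dt}{t}=C\log 3 .
\]

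\emph{Difference term.} I use the differentiated perturbation formula~\eqref{eq:perturbation_derivative}, valid for almost every $(t,\mathbf{x},\mathbf{y})$, together with $\|T_{j,\mathbf{x}}h_{t-s}(\cdot,\mathbf{z})\|_{L^1(dw)}\leq C(t-s)^{-1/2}$. By Tonelli's theorem the contribution is bounded by
\[
C\int_{r^2/2}^{3r^2/2}\int_0^t \frac{A(s,\mathbf{y})}{\sqrt{t-s}}\,ds\,\frac{dt}{\sqrt t},
\]
with $A$ as in the proof of Lemma~\ref{lem:weighted_kernel_derivatives}. Exchanging the order of integration, the inner quantity for fixed $s<3r^2/2$ is
\[
\int_{\max(s,r^2/2)}^{3r^2/2}\frac{dt}{\sqrt{t-s}\sqrt t}\leq \frac{\sqrt2}{r}\int_s^{s+3r^2/2}\frac{dt}{\sqrt{t-s}}=\frac{\sqrt2}{r}\cdot 2\sqrt{3r^2/2}\leq C,
\]
where I used $t\geq r^2/2$. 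The difference term is therefore at most $C\int_0^\infty A(s,\mathbf{y})\,ds\leq C$ by~\eqref{eq:VK_kernel}.

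The only step needing care is this last one, namely the integrable singularity at $t=s$ after Fubini. It is harmless because on this time range $t^{-1/2}\asymp r^{-1}$, so the $s$-integral of $(t-s)^{-1/2}$ over a window of length about $r^2$ produces a factor $r$ that cancels. Measurability and the null sets are handled exactly as in the proof of Lemma~\ref{lem:weighted_kernel_derivatives}, so the bound holds for almost every $\mathbf{y}$. Adding the two terms gives the constant $C$, independent of $r$.
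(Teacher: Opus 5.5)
Your proof is correct, but for the second inequality it takes a different route from the paper. The paper splits the $\mathbf{x}$-integral at $\|\mathbf{x}-\mathbf{y}\|=\sqrt{t}$. On the far region it inserts $\|\mathbf{x}-\mathbf{y}\|/\sqrt{t}\geq1$ and applies the already proved weighted bound~\eqref{eq:weight_schrodinger} with $R=\sqrt{3/2}\,r$. On $B(\mathbf{y},\sqrt{t})$ it uses Cauchy--Schwarz with the $L^2$ energy estimate of Lemma~\ref{lem:L_on_k_t}, which is where the $C\log 3$ comes from.

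You instead split the kernel as $h_t-(h_t-k_t)$. You control the heat part by the unweighted bound $\|T_{j,\mathbf{x}}h_u(\cdot,\mathbf{z})\|_{L^1(dw)}\leq Cu^{-1/2}$ and handle the difference perturbatively via~\eqref{eq:perturbation_derivative}. Your Fubini computation is sound: for $0<s<3r^2/2$ the inner $t$-integral is at most $2\sqrt{3}$. So everything reduces to the unweighted estimate~\eqref{eq:VK_kernel}.

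Your route therefore needs neither the energy estimate nor the weighted estimate of Lemma~\ref{lem:better_VK}. It also yields the pointwise-in-time bound $\int_{\mathbb{R}^N}|T_{j,\mathbf{x}}k_t(\mathbf{x},\mathbf{y})|\,dw(\mathbf{x})\leq Ct^{-1/2}+C\int_0^t(t-s)^{-1/2}A(s,\mathbf{y})\,ds$. This gives $C(1+\log(b/a))$ over any window $[a,b]$, not just a window of fixed ratio. The paper's route is shorter given that~\eqref{eq:weight_schrodinger} is already in hand, and it follows the standard near-diagonal/off-diagonal pattern.
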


\begin{proof}
The first inequality follows by the substitution
$s=t-r^2/2$ and the inequality
$(s+r^2/2)^{-1/2}\leq s^{-1/2}$.

To prove the second inequality, split the spatial integral
into the regions $\|\mathbf{x}-\mathbf{y}\|>\sqrt{t}$
and $\|\mathbf{x}-\mathbf{y}\|\leq\sqrt{t}$, and denote
the corresponding contributions by $I_1$ and $I_2$.

Set $R=\sqrt{3/2}\,r$. Since $t\geq r^2/2$ on the
interval of integration, \eqref{eq:weight_schrodinger} gives
\[
\begin{aligned}
    I_1
    &\leq
    \int_{r^2/2}^{3r^2/2}\int_{\mathbb{R}^N}
    \frac{\|\mathbf{x}-\mathbf{y}\|}{\sqrt{t}}
    |T_{j,\mathbf{x}}k_t(\mathbf{x},\mathbf{y})|
    \,dw(\mathbf{x})\,\frac{dt}{\sqrt{t}}
    \\
    &\leq
    \sqrt{3}\int_0^{R^2}\int_{\mathbb{R}^N}
    \frac{\|\mathbf{x}-\mathbf{y}\|}{R}
    |T_{j,\mathbf{x}}k_t(\mathbf{x},\mathbf{y})|
    \,dw(\mathbf{x})\,\frac{dt}{\sqrt{t}}
    \leq C.
\end{aligned}
\]
For $I_2$, we apply the Cauchy--Schwarz inequality
on $B(\mathbf{y},\sqrt{t})$:
\[
\begin{aligned}
    &\int_{B(\mathbf{y},\sqrt{t})}
    |T_{j,\mathbf{x}}k_t(\mathbf{x},\mathbf{y})|
    \,dw(\mathbf{x})
    \\
    &\qquad\leq
    \left(\int_{B(\mathbf{y},\sqrt{t})}1\,dw(\mathbf{x})\right)^{1/2}
    \left(\int_{B(\mathbf{y},\sqrt{t})}
    |T_{j,\mathbf{x}}k_t(\mathbf{x},\mathbf{y})|^2
    \,dw(\mathbf{x})\right)^{1/2}
    \\
    &\qquad\leq
    w(B(\mathbf{y},\sqrt{t}))^{1/2}
    \|T_{j,\mathbf{x}}k_t(\cdot,\mathbf{y})\|_{L^2(dw)}.
\end{aligned}
\]
Moreover, by the definition of $\mathbf{Q}$ and
Lemma~\ref{lem:L_on_k_t},
\[
    \|T_{j,\mathbf{x}}k_t(\cdot,\mathbf{y})\|_{L^2(dw)}^2
    \leq
    \mathbf{Q}(k_t(\cdot,\mathbf{y}),k_t(\cdot,\mathbf{y}))
    \leq \frac{C}{t\,w(B(\mathbf{y},\sqrt{t}))}.
\]
Combining these estimates, we obtain
\[
    I_2
    \leq C\int_{r^2/2}^{3r^2/2}
    \frac{1}{\sqrt{t}}\,\frac{dt}{\sqrt{t}}
    =C\log3.
\]
This completes the proof.
\end{proof}

\section{Local Hardy spaces}

We recall the atomic definition of local Hardy spaces
in the Dunkl setting and their Riesz transform characterization
from~\cite[Section 5]{Hejna}, following the classical local
theory of Goldberg~\cite{Goldberg}.
For a more recent treatment of local Hardy spaces and singular
integral operators in the Dunkl setting, see~\cite{HHLT_local}.

\begin{definition}\label{def:atomic_Dunkl}\normalfont
Let $T>0$. A measurable function $a$ is called an atom
for $H^1_{\mathrm{loc},T}$ if there exists a ball
$B=B(\mathbf{x}_0,r)$, with $r>0$, such that
\begin{enumerate}[(A)]
    \item $\operatorname{supp}a\subseteq B$;
    \label{numitem:support}

    \item $\|a\|_{L^\infty(dw)}\leq w(B)^{-1}$;
    \label{numitem:L_infty}

    \item if $r<T$, then
    $\displaystyle\int_{\mathbb{R}^N}a(\mathbf{x})\,dw(\mathbf{x})=0$.
    \label{numitem:cancellations}
\end{enumerate}
The space $H^1_{\mathrm{loc},T}$ consists of all
$f\in L^1(dw)$ admitting a representation
\begin{equation}\label{eq:atomic_representation_local}
    f=\sum_{\nu=1}^{\infty}c_\nu a_\nu
    \quad\text{in }L^1(dw),
    \qquad
    \sum_{\nu=1}^{\infty}|c_\nu|<\infty,
\end{equation}
where the $a_\nu$ are atoms for $H^1_{\mathrm{loc},T}$.
We set
\[
    \|f\|_{H^1_{\mathrm{loc},T}}
    =
    \inf\left\{\sum_{\nu=1}^{\infty}|c_\nu|\right\},
\]
where the infimum is taken over all
representations~\eqref{eq:atomic_representation_local}.
\end{definition}

For consistency with the normalization of $\widetilde{R}_j$,
we omit the fixed multiplicative constant used
in~\cite{Hejna} when defining the local Riesz transforms.

\begin{definition}\normalfont
For $T>0$, $j\in\{1,\ldots,N\}$, and $f\in L^1(dw)$,
define
\[
    R_j^Tf
    =
    \lim_{\varepsilon\to0^+}
    \int_\varepsilon^{T^2}T_jH_tf\,\frac{dt}{\sqrt{t}}
    \quad\text{in }\mathcal{S}'(\mathbb{R}^N).
\]
Equivalently, for $\phi\in\mathcal{S}(\mathbb{R}^N)$,
\[
    \langle R_j^Tf,\phi\rangle
    =
    -\int_0^{T^2}\int_{\mathbb{R}^N}
    H_tf(\mathbf{x})T_j\phi(\mathbf{x})
    \,dw(\mathbf{x})\,\frac{dt}{\sqrt{t}}.
\]
\end{definition}

The latter integral is absolutely convergent, since
$H_t$ is a contraction on $L^1(dw)$ and
\[
    \int_0^{T^2}\int_{\mathbb{R}^N}
    |H_tf(\mathbf{x})T_j\phi(\mathbf{x})|
    \,dw(\mathbf{x})\,\frac{dt}{\sqrt{t}}
    \leq 2T\|f\|_{L^1(dw)}\|T_j\phi\|_{L^\infty}.
\]
Together with Lemma~\ref{lem:Dunkl_gradient_Linfty},
this also shows that the pairing defines a tempered distribution.

The following characterization follows from the equivalence
of the atomic and maximal definitions of the local Hardy space
and its Riesz transform characterization
in~\cite[Propositions 5.15 and 5.18]{Hejna}.

\begin{proposition}[\cite{Hejna}, Propositions~5.15 and~5.18]
\label{propo:local_Riesz}
Let $T>0$ and $f\in L^1(dw)$.
Then $f\in H^1_{\mathrm{loc},T}$ if and only if
$R_j^Tf\in L^1(dw)$ for every $j=1,\ldots,N$,
where membership in $L^1(dw)$ is understood in the
distributional sense described above.
Moreover, there is a constant $C>0$, independent of $T$
and $f$, such that
\[
    C^{-1}\|f\|_{H^1_{\mathrm{loc},T}}
    \leq
    \|f\|_{L^1(dw)}
    +\sum_{j=1}^N\|R_j^Tf\|_{L^1(dw)}
    \leq C\|f\|_{H^1_{\mathrm{loc},T}},
    \qquad f\in H^1_{\mathrm{loc},T}.
\]
\end{proposition}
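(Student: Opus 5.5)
The plan is to derive Proposition~\ref{propo:local_Riesz} from the cited results of~\cite{Hejna} in two steps. First we prove the statement for the single scale $T=1$. Then we pass to arbitrary $T>0$ by a dilation argument that uses the homogeneity of $dw$ and of the Dunkl heat kernel. This dilation step is what makes the constant $C$ independent of $T$.

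\emph{Step 1 (the case $T=1$).} We use~\cite[Proposition~5.15]{Hejna}, which identifies the atomic local Hardy space with the space defined by the local maximal function $\sup_{0<t\leq1}|H_tf|$, with equivalent norms. We combine it with~\cite[Proposition~5.18]{Hejna}, which characterizes that maximal space by $\|f\|_{L^1(dw)}+\sum_j\|R_j^{\mathrm{loc}}f\|_{L^1(dw)}$. Two points must be checked.
\begin{enumerate}[(i)]
\item Our $R_j^1$ agrees with the local Riesz transform of~\cite{Hejna} up to a fixed multiplicative constant, which is harmless.
\item The meaning of ``$R_j^1f\in L^1(dw)$'' is the same. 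Both operators are defined as $\mathcal{S}'$-limits of $\int_\varepsilon^{1}T_jH_tf\,dt/\sqrt{t}$, and our pairing formula is obtained from it by weak integration by parts and dominated convergence. The bound $2T\|f\|_{L^1}\|T_j\phi\|_{L^\infty}$ recorded after the definition justifies the dominated convergence.
\end{enumerate}
If the cutoff in~\cite{Hejna} is placed at a different fixed time, say $\int_0^{c}$ instead of $\int_0^{1}$, the two versions differ by $\int_{c}^{1}T_jH_tf\,dt/\sqrt{t}$ (or its negative). This difference is bounded on $L^1(dw)$ by~\eqref{eq:heat_derivative_moments}, so the characterization is unaffected.

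\emph{Step 2 (scaling).} For $\lambda>0$, set $f_\lambda(\mathbf{x})=\lambda^{-\mathbf{N}}f(\mathbf{x}/\lambda)$. By homogeneity, $\|f_\lambda\|_{L^1(dw)}=\|f\|_{L^1(dw)}$. The heat kernel satisfies
\[
h_{\lambda^2t}(\lambda\mathbf{x},\lambda\mathbf{y})=\lambda^{-\mathbf{N}}h_t(\mathbf{x},\mathbf{y}),
\]
which follows from the homogeneity of $\Delta_k$ (or from the explicit Dunkl kernel formula), and $T_j(g(\cdot/\lambda))=\lambda^{-1}(T_jg)(\cdot/\lambda)$. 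A change of variables $t=\lambda^2s$ in the pairing definition, with the test function replaced by $\phi(\lambda\,\cdot)$, then gives
\[
R_j^{\lambda T}(f_\lambda)=(R_j^Tf)_\lambda\qquad\text{in }\mathcal{S}'.
\]
In the same way, $a\mapsto a_\lambda$ is a bijection between atoms for $H^1_{\mathrm{loc},T}$ and atoms for $H^1_{\mathrm{loc},\lambda T}$. It maps $B(\mathbf{x}_0,r)$ to $B(\lambda\mathbf{x}_0,\lambda r)$, uses $w(B(\lambda\mathbf{x}_0,\lambda r))=\lambda^{\mathbf{N}}w(B(\mathbf{x}_0,r))$, and preserves the condition $r<T$ in the form $\lambda r<\lambda T$. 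Hence $\|f_\lambda\|_{H^1_{\mathrm{loc},\lambda T}}=\|f\|_{H^1_{\mathrm{loc},T}}$. Applying Step 1 to $f_{1/T}$ transfers both the equivalence and the constant to every $T$.

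The main obstacle is Step 1(ii): reconciling the precise definitions in~\cite{Hejna} with ours, including the normalization constant, the upper time cutoff, and the distributional meaning of $R_jf\in L^1$. Everything else is bookkeeping. At that point I would first write the difference of the two operators explicitly as a truncated integral over a compact time range away from $0$ and bound it on $L^1(dw)$ via~\eqref{eq:heat_derivative_moments}.
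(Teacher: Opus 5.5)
Your proposal is correct and follows the paper's route. The paper gives no separate argument: it obtains the proposition by invoking \cite[Propositions~5.15 and~5.18]{Hejna} (equivalence of atomic and maximal local Hardy spaces, plus the Riesz transform characterization), after dropping the normalizing constant of the local Riesz transforms. Your dilation step, based on $h_{\lambda^2t}(\lambda\mathbf{x},\lambda\mathbf{y})=\lambda^{-\mathbf{N}}h_t(\mathbf{x},\mathbf{y})$, $R_j^{\lambda T}(f_\lambda)=(R_j^Tf)_\lambda$, and the exact correspondence of atoms, is a sound way to make explicit why $C$ does not depend on $T$; the paper takes that independence directly from the cited source.
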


\section{Auxiliary lemmas}

The arguments in this section follow the approach
of~\cite[Section~2]{DzP}.
For $Q\in\mathcal{Q}$, set
\begin{equation}\label{eq:Q_prim}
    \mathcal{Q}'(Q)
    =
    \{P\in\mathcal{Q}:Q^{***}\cap P^{***}\neq\varnothing\},
\end{equation}
and
\begin{equation}\label{eq:Q_bis}
    \mathcal{Q}''(Q)
    =
    \{P\in\mathcal{Q}:Q^{***}\cap P^{***}=\varnothing\}.
\end{equation}
Condition~\eqref{eq:finite_overlap} implies that
$d(P)\asymp d(Q)$ for $P\in\mathcal{Q}'(Q)$.
Together with the disjointness of the interiors of the cubes,
this also gives
\[
    \sup_{Q\in\mathcal{Q}}\#\mathcal{Q}'(Q)<\infty.
\]

\begin{lemma}\label{lem:R_j_infty}
Assume that~\eqref{eq:finite_overlap} and~\eqref{eq:D} hold.
There is a constant $C>0$ such that, for every
$Q\in\mathcal{Q}$, $j\in\{1,\ldots,N\}$, and
$dw$-almost every
\[
    \mathbf{y}\in
    \bigcup_{P\in\mathcal{Q}'(Q)}P^{****},
\]
we have
\begin{equation}\label{eq:R_j_infty}
    \int_{\mathbb{R}^N}
    \sup_{0<\varepsilon<1}
    |\widetilde{R}_{j,Q,\infty}^{\varepsilon}
    (\mathbf{x},\mathbf{y})|
    \,dw(\mathbf{x})
    \leq C.
\end{equation}
\end{lemma}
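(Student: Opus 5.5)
The plan is to bound the supremum by the full dyadic time integral and control it with \eqref{eq:D} through a semigroup factorization. Since
\[
    \widetilde{R}_{j,Q,\infty}^{\varepsilon}(\mathbf{x},\mathbf{y})
    =\int_{\max(\varepsilon,d(Q)^2)}^{1/\varepsilon}
    T_{j,\mathbf{x}}k_t(\mathbf{x},\mathbf{y})\,\frac{dt}{\sqrt{t}},
\]
we have
\[
    \sup_{0<\varepsilon<1}
    |\widetilde{R}_{j,Q,\infty}^{\varepsilon}(\mathbf{x},\mathbf{y})|
    \leq\int_{d(Q)^2}^\infty
    |T_{j,\mathbf{x}}k_t(\mathbf{x},\mathbf{y})|\,\frac{dt}{\sqrt{t}}.
\]
By Tonelli's theorem it therefore suffices to show
\[
    \int_{d(Q)^2}^\infty\|T_{j,\mathbf{x}}k_t(\cdot,\mathbf{y})\|_{L^1(dw)}
    \,\frac{dt}{\sqrt{t}}\leq C
\]
for almost every $\mathbf{y}\in P^{****}$ with $P\in\mathcal{Q}'(Q)$. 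We split $[d(Q)^2,\infty)$ into the dyadic pieces $I_n=[2^{n}d(Q)^2,2^{n+1}d(Q)^2)$, $n\geq0$.

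On each piece we factor the kernel through the semigroup property \eqref{eq:kernel_semigroup}. For $t\in I_n$ with $n\geq1$, set $u=2^{n-1}d(Q)^2$ and write
\[
    k_t(\mathbf{x},\mathbf{y})
    =\int_{\mathbb{R}^N}k_{t-u}(\mathbf{x},\mathbf{z})k_u(\mathbf{z},\mathbf{y})\,dw(\mathbf{z}).
\]
Testing against $\varphi\in C_c^\infty$ and using Fubini's theorem gives
\[
    T_{j,\mathbf{x}}k_t(\mathbf{x},\mathbf{y})
    =\int_{\mathbb{R}^N} T_{j,\mathbf{x}}k_{t-u}(\mathbf{x},\mathbf{z})\,k_u(\mathbf{z},\mathbf{y})\,dw(\mathbf{z}),
\]
so that
\[
    \int_{I_n}\|T_{j,\mathbf{x}}k_t(\cdot,\mathbf{y})\|_{L^1(dw)}\frac{dt}{\sqrt{t}}
    \leq\int_{\mathbb{R}^N}k_u(\mathbf{z},\mathbf{y})
    \left(\int_{I_n}\|T_{j,\mathbf{x}}k_{t-u}(\cdot,\mathbf{z})\|_{L^1(dw)}\frac{dt}{\sqrt{t}}\right)dw(\mathbf{z}).
\]
The inner factor has $t-u\in[u,3u)$ and $t\geq t-u$. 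It is exactly of the form controlled by Corollary~\ref{coro:logarithm} with $r^2=2u$ (after the substitution there), so it is bounded by $C$ uniformly in almost every $\mathbf{z}$. This reduces the $n$-th piece to $C\int k_{2^{n-1}d(Q)^2}(\mathbf{z},\mathbf{y})\,dw(\mathbf{z})$.

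We then apply \eqref{eq:D}. The point $\mathbf{y}$ lies in $P^{****}$ with $d(P)\asymp d(Q)$ by \eqref{eq:finite_overlap}, while \eqref{eq:D} is stated for times $2^md(P)^2$. The total mass $t\mapsto\int k_t(\mathbf{z},\mathbf{y})\,dw(\mathbf{z})$ is nonincreasing, because $K_t$ is sub-Markovian via \eqref{eq:kernel_semigroup} and \eqref{eq:kernel_mass}. We choose $m$ with $2^md(P)^2\leq 2^{n-1}d(Q)^2$, so that $m\geq n-1-c_0$, where $c_0$ depends only on $C_0$. This gives the bound $C\max(1,m)^{-1-\eta}\leq C' n^{-1-\eta}$ for large $n$. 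The finitely many small $n$ (including $n=0$, and those $n$ where $m<1$) are handled directly by Corollary~\ref{coro:logarithm} with mass at most $1$. Summing $\sum_n n^{-1-\eta}<\infty$ gives \eqref{eq:R_j_infty}.

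The main obstacle is the justification of the differentiated semigroup identity and the measure-zero bookkeeping, since Corollary~\ref{coro:logarithm} holds only for almost every $\mathbf{z}$, for each $r$. I would handle this by taking the exceptional null sets over the countably many dyadic values of $u$. The differentiation identity needs absolute integrability, which follows from Corollary~\ref{coro:logarithm} after integrating in $t$; so I would establish the bound in integrated form over $I_n$ via Fubini, rather than pointwise in $t$.
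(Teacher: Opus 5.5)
Your argument is correct and is essentially the paper's proof: bound the supremum by the absolute time integral, cut $[d(Q)^2,\infty)$ dyadically, factor each piece through the semigroup at half the left endpoint, apply Corollary~\ref{coro:logarithm} to the inner factor, and sum the remaining masses using \eqref{eq:D}. The only difference is how the scale mismatch is handled: the paper splits off the bounded piece between $\min\{d(Q)^2,d(P)^2\}$ and $d(P)^2$ and runs the dyadic decomposition on the scale $d(P)$, so that \eqref{eq:D} applies to $P$ directly, whereas you stay on the scale $d(Q)$ and pass to times $2^m d(P)^2$ using the monotonicity of $t\mapsto\int k_t(\mathbf{z},\mathbf{y})\,dw(\mathbf{z})$, which is equally valid.
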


\begin{proof}
By the definition of the truncated kernels,
\[
    \sup_{0<\varepsilon<1}
    |\widetilde{R}_{j,Q,\infty}^{\varepsilon}
    (\mathbf{x},\mathbf{y})|
    \leq
    \int_{d(Q)^2}^{\infty}
    |T_{j,\mathbf{x}}k_t(\mathbf{x},\mathbf{y})|
    \,\frac{dt}{\sqrt{t}}.
\]
It therefore suffices to prove
\begin{equation}\label{eq:R_j_infty_absolute}
    \int_{d(Q)^2}^{\infty}\int_{\mathbb{R}^N}
    |T_{j,\mathbf{x}}k_t(\mathbf{x},\mathbf{y})|
    \,dw(\mathbf{x})\,\frac{dt}{\sqrt{t}}
    \leq C.
\end{equation}

Choose $P\in\mathcal{Q}'(Q)$ with $\mathbf{y}\in P^{****}$.
The left-hand side of~\eqref{eq:R_j_infty_absolute}
is bounded by $S_1+S_2$, where
\[
\begin{aligned}
    S_1
    &=
    \int_{\min\{d(Q)^2,d(P)^2\}}^{d(P)^2}
    \int_{\mathbb{R}^N}
    |T_{j,\mathbf{x}}k_t(\mathbf{x},\mathbf{y})|
    \,dw(\mathbf{x})\,\frac{dt}{\sqrt{t}},\\
    S_2
    &=
    \int_{d(P)^2}^{\infty}\int_{\mathbb{R}^N}
    |T_{j,\mathbf{x}}k_t(\mathbf{x},\mathbf{y})|
    \,dw(\mathbf{x})\,\frac{dt}{\sqrt{t}}.
\end{aligned}
\]
By~\eqref{eq:finite_overlap}, $d(P)\asymp d(Q)$.
Consequently, the interval defining $S_1$ can be covered
by a bounded number of intervals of the form $[a,2a]$,
where the number depends only on $C_0$.
Corollary~\ref{coro:logarithm} bounds the integral over
each such interval uniformly, and hence $S_1\leq C$.

To estimate $S_2$, set $a_n=2^n d(P)^2$.
The semigroup property yields, in the weak sense,
\[
    T_{j,\mathbf{x}}k_t(\mathbf{x},\mathbf{y})
    =
    \int_{\mathbb{R}^N}
    T_{j,\mathbf{x}}k_{t-a_n/2}(\mathbf{x},\mathbf{z})
    k_{a_n/2}(\mathbf{z},\mathbf{y})\,dw(\mathbf{z}),
    \qquad a_n\leq t\leq2a_n.
\]
Indeed, this identity follows by testing the semigroup
formula against a compactly supported smooth function
and using weak integration by parts.
The required interchange of integrals is justified by
Lemma~\ref{lem:L_on_k_t}, the uniform lower bound
$w(B(\mathbf{z},\sqrt{u}))\gtrsim u^{\mathbf{N}/2}$ (see~\eqref{eq:volume_lower}),
and
$\int k_{a_n/2}(\mathbf{z},\mathbf{y})\,dw(\mathbf{z})\leq1$.

Thus, Tonelli's theorem and~\eqref{eq:logarithm},
with $r=\sqrt{a_n}$, give
\[
\begin{aligned}
    S_2
    &\leq
    \sum_{n=0}^{\infty}
    \int_{\mathbb{R}^N}k_{a_n/2}(\mathbf{z},\mathbf{y})
    \left(
        \int_{a_n}^{2a_n}\int_{\mathbb{R}^N}
        |T_{j,\mathbf{x}}k_{t-a_n/2}
        (\mathbf{x},\mathbf{z})|
        \,dw(\mathbf{x})\,\frac{dt}{\sqrt{t}}
    \right)\,dw(\mathbf{z})\\
    &\leq
    C\sum_{n=0}^{\infty}
    \int_{\mathbb{R}^N}
    k_{2^{n-1}d(P)^2}(\mathbf{z},\mathbf{y})
    \,dw(\mathbf{z}).
\end{aligned}
\]
For $n=0,1$, the domination $0\leq k_t\leq h_t$
and the normalization of the heat kernel give
\[
    \sum_{n=0}^{1}
    \int_{\mathbb{R}^N}
    k_{2^{n-1}d(P)^2}(\mathbf{z},\mathbf{y})
    \,dw(\mathbf{z})
    \leq 2.
\]
For $n\geq2$, condition~\eqref{eq:D} applied to $P$
with index $n-1$ gives
\[
    \int_{\mathbb{R}^N}
    k_{2^{n-1}d(P)^2}(\mathbf{z},\mathbf{y})
    \,dw(\mathbf{z})
    \leq C(n-1)^{-1-\eta}.
\]
Therefore,
\[
    S_2\leq C+C\sum_{n=2}^{\infty}(n-1)^{-1-\eta}<\infty,
\]
which proves~\eqref{eq:R_j_infty_absolute}
and hence~\eqref{eq:R_j_infty}.
\end{proof}

For $0<\varepsilon<1$, define
\[
    W_{j,Q}^{\varepsilon}f(\mathbf{x})
    =
    \int_{\mathbb{R}^N}
    \int_{\varepsilon}^{1/\varepsilon}
    \mathbf{1}_{(0,d(Q)^2)}(t)
    T_{j,\mathbf{x}}(h_t-k_t)(\mathbf{x},\mathbf{y})
    \,\frac{dt}{\sqrt{t}}\,
    f(\mathbf{y})\,dw(\mathbf{y}).
\]

For $f\in L^1(dw)$, define
\begin{equation}\label{eq:W_other_formula}
    W_{j,Q}f
    =R_j^{d(Q)}f-\widetilde{R}_{j,Q,0}f
    \quad\text{in }\mathcal{S}'(\mathbb{R}^N).
\end{equation}
Equivalently, $W_{j,Q}f$ is the distributional limit of
$W_{j,Q}^{\varepsilon}f$ as $\varepsilon\to0^+$.

Let $\{\phi_{Q}\}_{Q \in \mathcal{Q}}$ be a smooth resolution of identity associated with $\mathcal{Q}$, that means the collection of $C^{\infty}$-functions on $\mathbb{R}^N$ such that $\supp \phi_{Q} \subseteq Q^{*}$, $0 \leq \phi_Q(\mathbf{x}) \leq 1$,
\begin{equation}\label{eq:partition_derivative}
    |\partial^{\alpha}\phi_{Q}(\mathbf{x})| \leq C_{\alpha}d(Q)^{-|\alpha|} \text{ for all }\alpha \in \mathbb{N}_0^{N},
\end{equation}
and $\sum_{Q \in \mathcal{Q}}\phi_Q(\mathbf{x})=1$ for all $\mathbf{x} \in \mathbb{R}^N$. The existence of $\{\phi_{Q}\}_{Q \in \mathcal{Q}}$ is guaranteed by~\eqref{eq:finite_overlap}.

\begin{lemma}\label{lem:diff}
Assume that~\eqref{eq:K} holds.
There is a constant $C>0$ such that, for every
$Q\in\mathcal{Q}$ and $j\in\{1,\ldots,N\}$,
\begin{equation}\label{eq:diff_main}
    \operatorname*{ess\,sup}_{\mathbf{y}\in Q^*}
    \int_0^{d(Q)^2}\int_{\mathbb{R}^N}
    |T_{j,\mathbf{x}}(h_t-k_t)(\mathbf{x},\mathbf{y})|
    \,dw(\mathbf{x})\,\frac{dt}{\sqrt{t}}
    \leq C.
\end{equation}
Consequently, for every $f\in L^1(dw)$,
\begin{equation}\label{eq:diff_convergence}
    \lim_{\varepsilon\to0^+}
    \|W_{j,Q}^{\varepsilon}(\phi_Qf)
      -W_{j,Q}(\phi_Qf)\|_{L^1(dw)}
    =0,
\end{equation}
and
\begin{equation}\label{eq:diff_boudedness}
    \|W_{j,Q}(\phi_Qf)\|_{L^1(dw)}
    \leq C\|\phi_Qf\|_{L^1(dw)}.
\end{equation}
\end{lemma}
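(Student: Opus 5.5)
The plan is to start from the differentiated perturbation formula~\eqref{eq:perturbation_derivative} and split the potential according to whether $\mathbf{z}$ lies near $Q$ or far from it. The far part is handled by the first-moment estimate of Lemma~\ref{lem:better_VK}, and the near part by condition~\eqref{eq:K}.

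\emph{Reduction.} By~\eqref{eq:perturbation_derivative}, Tonelli's theorem and the first bound in~\eqref{eq:heat_derivative_moments}, for almost every $\mathbf{y}\in Q^*$ the left-hand side of~\eqref{eq:diff_main} is at most
\[
    C\int_0^{d(Q)^2}\int_0^t(t-s)^{-1/2}
    \int_{\mathbb{R}^N}V(\mathbf{z})k_s(\mathbf{z},\mathbf{y})\,dw(\mathbf{z})\,ds\,\frac{dt}{\sqrt t}.
\]
I split the $\mathbf{z}$-integral into $\mathbf{z}\notin Q^{***}$ and $\mathbf{z}\in Q^{***}$.

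\emph{Far part.} If $\mathbf{y}\in Q^*$ and $\mathbf{z}\notin Q^{***}$, then $\|\mathbf{z}-\mathbf{y}\|\geq d(Q)$. So the far part is dominated by $d(Q)^{-1}\int_0^{d(Q)^2}\int_0^t (t-s)^{-1/2}B(s,\mathbf{y})\,ds\,dt/\sqrt t$, with $B$ as in the proof of Lemma~\ref{lem:weighted_kernel_derivatives}. That proof already bounds this quantity by a constant: split $s<t/2$ and $s\ge t/2$, and use~\eqref{eq:VK_first_moment} with $T=d(Q)^2$ together with~\eqref{eq:square_root_integral}.

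\emph{Near part.} Here I use $k_s\le h_s\le C\mathcal{G}_{s/c}$ from Theorem~\ref{theorem:heat} and set
\[
    F(s)=\int_{Q^{***}}V(\mathbf{z})\mathcal{G}_{s/c}(\mathbf{z},\mathbf{y})\,dw(\mathbf{z}).
\]
Exchanging the order of integration gives the bound $C\int_0^{D}F(s)\Phi(s)\,ds$, where $D=d(Q)^2$ and
\[
    \Phi(s)=\int_s^{D}\frac{dt}{\sqrt{t-s}\sqrt t}.
\]
An elementary computation gives $\Phi(s)\le C\log(4D/s)$ for $0<s<D$. Next, condition~\eqref{eq:K} (with $\mathbf{x}$ replaced by $\mathbf{y}$, using the symmetry of $\mathcal{G}$) and the substitution $s\mapsto 2s$ give
\[
    \int_0^{u}F(s)\,ds\le C(u/D)^\delta,\qquad 0<u\le 4D.
\]
Decomposing $(0,D)$ into dyadic pieces $(2^{-m-1}D,2^{-m}D]$, on each piece the logarithm is at most $C(m+3)$, so the near part is bounded by
\[
    C\sum_{m\ge0}(m+3)2^{-m\delta}<\infty.
\]
I expect this to be the main obstacle. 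The logarithmic singularity of $\Phi$ at $s=0$ must be absorbed by the power gain $\delta$ in~\eqref{eq:K}, and the normalizations $\mathcal{G}_{2s/c}$ versus $\mathcal{G}_{s/c}$ and the range $t\le d(Q)^2$ have to match up. This is also the only point where~\eqref{eq:K} enters.

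\emph{Consequences.} Since $\operatorname{supp}\phi_Q\subseteq Q^*$, only $\mathbf{y}\in Q^*$ contribute. For such $\mathbf{y}$, by~\eqref{eq:diff_main}, the truncated kernels $\int_\varepsilon^{1/\varepsilon}\mathbf{1}_{(0,d(Q)^2)}(t)T_{j,\mathbf{x}}(h_t-k_t)(\mathbf{x},\mathbf{y})\,dt/\sqrt t$ converge in $L^1(dw(\mathbf{x}))$ to the full integral over $(0,d(Q)^2)$, by dominated convergence, and they are uniformly bounded there. Applying Minkowski's integral inequality and dominated convergence in $\mathbf{y}$ against $|\phi_Qf|$, the functions $W^\varepsilon_{j,Q}(\phi_Qf)$ converge in $L^1(dw)$ to the operator with the full kernel. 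That operator satisfies~\eqref{eq:diff_boudedness}. Finally, $L^1$ convergence implies convergence in $\mathcal{S}'$, and $W^\varepsilon_{j,Q}f\to W_{j,Q}f$ distributionally by Proposition~\ref{prop:Riesz_distributions} and the definition of $R_j^{d(Q)}$. Hence the limit coincides with $W_{j,Q}(\phi_Qf)$, which gives~\eqref{eq:diff_convergence}.
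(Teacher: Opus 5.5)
Your proof is correct, but it organizes the estimate differently from the paper.

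\textbf{Far part.} The paper first removes the region $\mathbf{x}\notin Q^{**}$ via Corollary~\ref{coro:outside}, and only then splits $V$ at $Q^{***}$. Its far-potential term $J_2$ uses the separation $\|\mathbf{x}-\mathbf{z}\|\geq d(Q)$ for $\mathbf{x}\in Q^{**}$, the first moment of $T_{j,\mathbf{x}}h_u$ in \eqref{eq:heat_derivative_moments}, and only the unweighted bound \eqref{eq:VK_kernel}. You never split in $\mathbf{x}$. The global bound $\int_{\mathbb{R}^N}|T_{j,\mathbf{x}}h_{t-s}(\mathbf{x},\mathbf{z})|\,dw(\mathbf{x})\leq C(t-s)^{-1/2}$ reduces everything to an $(s,\mathbf{z})$-integral. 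You then use the separation $\|\mathbf{z}-\mathbf{y}\|\geq d(Q)$ for $\mathbf{y}\in Q^*$ instead, which moves the spatial weight onto the potential side and invokes Lemma~\ref{lem:better_VK} directly through $B(s,\mathbf{y})$. This removes one layer of the decomposition. It does not reduce the prerequisites, since Corollary~\ref{coro:outside} itself rests on \eqref{eq:weight_schrodinger} and hence on Lemma~\ref{lem:better_VK}.

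\textbf{Near part.} The paper separates $s<t/2$, where \eqref{eq:K} is integrated against $dt/t$, from $s\geq t/2$, where \eqref{eq:square_root_integral} and \eqref{eq:VK_kernel} suffice and no decay is needed. After Fubini, your weight $\Phi(s)\leq\log(4D/s)$ is precisely the sum of these two contributions; the logarithm comes from $t>2s$. So your dyadic summation against $(u/D)^{\delta}$ is the same use of \eqref{eq:K}, done in one step. The paper's split makes explicit that \eqref{eq:K} is only needed when $s\ll t$. Your substitution $s\mapsto 2s$ matches the normalization $\mathcal{G}_{2s/c}$ exactly, without the doubling comparison $\mathcal{G}_{s/c}\leq C\mathcal{G}_{2s/c}$ that the paper uses.

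Your derivation of \eqref{eq:diff_convergence} and \eqref{eq:diff_boudedness} is the same as the paper's.
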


\begin{proof}
Fix $Q\in\mathcal{Q}$.
By Corollary~\ref{coro:outside}, the contribution
to~\eqref{eq:diff_main} from $\mathbf{x}\notin Q^{**}$
is uniformly bounded. It remains to estimate the integral
over $Q^{**}$.

Set $V_1=V\mathbf{1}_{Q^{***}}$ and $V_2=V-V_1$.
By~\eqref{eq:perturbation_derivative} and Tonelli's theorem,
for almost every $\mathbf{y}\in Q^*$,
\[
\begin{aligned}
    &\int_0^{d(Q)^2}\int_{Q^{**}}
    |T_{j,\mathbf{x}}(h_t-k_t)(\mathbf{x},\mathbf{y})|
    \,dw(\mathbf{x})\,\frac{dt}{\sqrt{t}}
    \leq
    J'_1(\mathbf{y})+J''_1(\mathbf{y})+J_2(\mathbf{y}),
\end{aligned}
\]
where
\[
\begin{aligned}
    J'_1(\mathbf{y})
    &=
    \int_0^{d(Q)^2}\int_0^{t/2}
    \int_{\mathbb{R}^N}
    \left(\int_{Q^{**}}
    |T_{j,\mathbf{x}}h_{t-s}(\mathbf{x},\mathbf{z})|
    \,dw(\mathbf{x})\right)
    V_1(\mathbf{z})k_s(\mathbf{z},\mathbf{y})
    \,dw(\mathbf{z})\,ds\,\frac{dt}{\sqrt{t}},\\
    J''_1(\mathbf{y})
    &=
    \int_0^{d(Q)^2}\int_{t/2}^{t}
    \int_{\mathbb{R}^N}
    \left(\int_{Q^{**}}
    |T_{j,\mathbf{x}}h_{t-s}(\mathbf{x},\mathbf{z})|
    \,dw(\mathbf{x})\right)
    V_1(\mathbf{z})k_s(\mathbf{z},\mathbf{y})
    \,dw(\mathbf{z})\,ds\,\frac{dt}{\sqrt{t}},\\
    J_2(\mathbf{y})
    &=
    \int_0^{d(Q)^2}\int_0^t
    \int_{\mathbb{R}^N}
    \left(\int_{Q^{**}}
    |T_{j,\mathbf{x}}h_{t-s}(\mathbf{x},\mathbf{z})|
    \,dw(\mathbf{x})\right)
    V_2(\mathbf{z})k_s(\mathbf{z},\mathbf{y})
    \,dw(\mathbf{z})\,ds\,\frac{dt}{\sqrt{t}}.
\end{aligned}
\]

For $s<t/2$, \eqref{eq:heat_derivative_moments} gives
\[
    \int_{Q^{**}}
    |T_{j,\mathbf{x}}h_{t-s}(\mathbf{x},\mathbf{z})|
    \,dw(\mathbf{x})\leq Ct^{-1/2}.
\]
Moreover, by Theorem~\ref{theorem:heat}, the domination
$k_s\leq h_s$, and the doubling property,
\[
    k_s(\mathbf{z},\mathbf{y})
    \leq C\mathcal{G}_{s/c}(\mathbf{y},\mathbf{z})
    \leq C'\mathcal{G}_{2s/c}(\mathbf{y},\mathbf{z}).
\]
Thus condition~\eqref{eq:K} yields
\[
\begin{aligned}
    J'_1(\mathbf{y})
    &\leq
    C\int_0^{d(Q)^2}
    \left(
        \int_0^{t/2}\int_{Q^{***}}
        V(\mathbf{z})k_s(\mathbf{z},\mathbf{y})
        \,dw(\mathbf{z})\,ds
    \right)\frac{dt}{t}\leq
    C\int_0^{d(Q)^2}
    \left(\frac{t}{d(Q)^2}\right)^\delta
    \frac{dt}{t}
    \leq C.
\end{aligned}
\]

For $J''_1$, we use~\eqref{eq:heat_derivative_moments},
change the order of integration, and apply
\eqref{eq:square_root_integral} and~\eqref{eq:VK_kernel}:
\[
\begin{aligned}
    J''_1(\mathbf{y})
    &\leq
    C\int_0^{d(Q)^2}\int_{Q^{***}}
    \left(
        \int_s^{\min(2s,d(Q)^2)}
        \frac{dt}{\sqrt{t-s}\sqrt{t}}
    \right)
    V(\mathbf{z})k_s(\mathbf{z},\mathbf{y})
    \,dw(\mathbf{z})\,ds\\
    &\leq
    C\int_0^{d(Q)^2}\int_{Q^{***}}
    V(\mathbf{z})k_s(\mathbf{z},\mathbf{y})
    \,dw(\mathbf{z})\,ds
    \leq C.
\end{aligned}
\]

Finally, if $\mathbf{x}\in Q^{**}$ and
$\mathbf{z}\notin Q^{***}$, then
$\|\mathbf{x}-\mathbf{z}\|\geq d(Q)$. Hence
\eqref{eq:heat_derivative_moments} implies
\[
    \int_{Q^{**}}
    |T_{j,\mathbf{x}}h_u(\mathbf{x},\mathbf{z})|
    \,dw(\mathbf{x})
    \leq
    \frac{1}{d(Q)}\int_{\mathbb{R}^N}
    \|\mathbf{x}-\mathbf{z}\|
    |T_{j,\mathbf{x}}h_u(\mathbf{x},\mathbf{z})|
    \,dw(\mathbf{x})
    \leq \frac{C}{d(Q)}.
\]
Using~\eqref{eq:VK_kernel}, we conclude that
\[
\begin{aligned}
    J_2(\mathbf{y})
    &\leq
    \frac{C}{d(Q)}\int_0^{d(Q)^2}
    \left(
        \int_0^t\int_{\mathbb{R}^N}
        V(\mathbf{z})k_s(\mathbf{z},\mathbf{y})
        \,dw(\mathbf{z})\,ds
    \right)\frac{dt}{\sqrt{t}}\leq
    \frac{C}{d(Q)}\int_0^{d(Q)^2}\frac{dt}{\sqrt{t}}
    \leq C.
\end{aligned}
\]
This proves~\eqref{eq:diff_main}.

Let $g=\phi_Qf$. Since $\operatorname{supp}g\subseteq Q^*$,
estimate~\eqref{eq:diff_main} and Tonelli's theorem show
that
\[
    F(\mathbf{x})
    =
    \int_{\mathbb{R}^N}g(\mathbf{y})
    \int_0^{d(Q)^2}
    T_{j,\mathbf{x}}(h_t-k_t)(\mathbf{x},\mathbf{y})
    \,\frac{dt}{\sqrt{t}}\,dw(\mathbf{y})
\]
is absolutely convergent for almost every $\mathbf{x}$,
belongs to $L^1(dw)$, and satisfies
$\|F\|_{L^1(dw)}\leq C\|g\|_{L^1(dw)}$.
Furthermore, for sufficiently small $\varepsilon>0$,
\[
\begin{aligned}
    \|W_{j,Q}^{\varepsilon}g-F\|_{L^1(dw)}
    &\leq
    \int_{Q^*}|g(\mathbf{y})|
    \int_0^\varepsilon\int_{\mathbb{R}^N}
    |T_{j,\mathbf{x}}(h_t-k_t)(\mathbf{x},\mathbf{y})|
    \,dw(\mathbf{x})\,\frac{dt}{\sqrt{t}}\,dw(\mathbf{y})
    \longrightarrow0
\end{aligned}
\]
by dominated convergence.
Since $W_{j,Q}^{\varepsilon}g$ also converges to
$W_{j,Q}g$ in $\mathcal{S}'$, we have $F=W_{j,Q}g$.
This proves~\eqref{eq:diff_convergence}
and~\eqref{eq:diff_boudedness}.
\end{proof}

\begin{lemma}\label{lem:commutator}
Assume that~\eqref{eq:D} holds.
There is a constant $C>0$ such that for every
$Q\in\mathcal{Q}$, $j\in\{1,\ldots,N\}$, and
$f\in L^1(dw)$ with
\[
    \operatorname{supp}f
    \subseteq
    \bigcup_{Q'\in\mathcal{Q}'(Q)}(Q')^{****},
\]
the distribution
$\widetilde{R}_j(\phi_Qf)-\phi_Q\widetilde{R}_jf$
is represented by a function in $L^1(dw)$ and satisfies
\[
    \|\widetilde{R}_j(\phi_Qf)
      -\phi_Q\widetilde{R}_jf\|_{L^1(dw)}
    \leq C\|f\|_{L^1(dw)}.
\]
\end{lemma}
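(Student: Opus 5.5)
Split $\widetilde{R}_j=\widetilde{R}_{j,Q,0}+\widetilde{R}_{j,Q,\infty}$ (Proposition~\ref{prop:Riesz_distributions}) and treat the two pieces separately:
\[
    \widetilde{R}_j(\phi_Qf)-\phi_Q\widetilde{R}_jf
    =
    \bigl[\widetilde{R}_{j,Q,\infty}(\phi_Qf)-\phi_Q\widetilde{R}_{j,Q,\infty}f\bigr]
    +
    \bigl[\widetilde{R}_{j,Q,0}(\phi_Qf)-\phi_Q\widetilde{R}_{j,Q,0}f\bigr].
\]

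\textbf{The large-time part.} Since $\operatorname{supp}f\subseteq\bigcup_{Q'\in\mathcal{Q}'(Q)}(Q')^{****}$, Lemma~\ref{lem:R_j_infty} applies for almost every $\mathbf{y}$ in the support of $f$. With Tonelli and dominated convergence in $\varepsilon$, it shows that $\widetilde{R}_{j,Q,\infty}f$ and $\widetilde{R}_{j,Q,\infty}(\phi_Qf)$ are represented by $L^1(dw)$ functions with norms at most $C\|f\|_{L^1(dw)}$. The truncations converge in $L^1$, so they also converge in $\mathcal{S}'$, and the limits agree with the distributions of Proposition~\ref{prop:Riesz_distributions}. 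Since $0\le\phi_Q\le1$, multiplying by $\phi_Q$ preserves the $L^1$ bound. So this bracket is bounded by $C\|f\|_{L^1(dw)}$, and no cancellation between its two terms is needed.

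\textbf{The small-time part.} Write $\widetilde{R}_{j,Q,0}=R_j^{d(Q)}-W_{j,Q}$ by \eqref{eq:W_other_formula}. The $W$-piece must be handled on the functions $f\phi_P$ with $P\in\mathcal{Q}'(Q)$ and $\phi_P$ supported in $P^*$. These are the only $P$ meeting the support of $f$ up to boundedly many overlaps, since $\phi_P$ is supported in $P^*$ and the supports are controlled by \eqref{eq:finite_overlap}. Lemma~\ref{lem:diff} applied to $P$ bounds $W_{j,P}(\phi_Pf)$ in $L^1$. To pass from $d(P)^2$ to $d(Q)^2$ in the time cutoff, I use $d(P)\asymp d(Q)$ and the fact that the leftover time interval of length comparable to $d(Q)^2$ is controlled by Corollary~\ref{coro:logarithm} for $k_t$ and by \eqref{eq:heat_derivative_moments} for $h_t$. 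The cutoff $d(Q)^2$ must stay the same in both terms; otherwise the pieces cannot be compared. Summing over $\#\mathcal{Q}'(Q)\le C$ and using $\sum_P\phi_P=1$ gives $\|W_{j,Q}(\phi_Qf)\|_{L^1}+\|\phi_QW_{j,Q}f\|_{L^1}\le C\|f\|_{L^1}$. The same argument written for $\phi_Q f$ alone handles the first term.

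It then remains to bound the heat commutator $R_j^{d(Q)}(\phi_Qf)-\phi_QR_j^{d(Q)}f$, whose kernel is
\[
    \int_0^{d(Q)^2}T_{j,\mathbf{x}}h_t(\mathbf{x},\mathbf{y})\bigl(\phi_Q(\mathbf{y})-\phi_Q(\mathbf{x})\bigr)\frac{dt}{\sqrt t}.
\]
The factor $|\phi_Q(\mathbf{y})-\phi_Q(\mathbf{x})|\le C\|\mathbf{x}-\mathbf{y}\|/d(Q)$ comes from \eqref{eq:partition_derivative}. The integrand is pointwise, because $h_t$ is smooth and \eqref{eq:heat_derivative} gives $T_{j,\mathbf{x}}h_t$ explicitly. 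Bounding $|\phi_Q(\mathbf{y})-\phi_Q(\mathbf{x})|$ by $\|\mathbf{x}-\mathbf{y}\|/d(Q)$, estimate \eqref{eq:weight_heat} with $r=d(Q)$ gives a kernel with $\sup_{\mathbf{y}}\int|\cdot|\,dw(\mathbf{x})\le C$. Hence the commutator is an $L^1$-bounded integral operator, and the distributional identity holds by testing against $\phi\in\mathcal{S}$ with Fubini.

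\textbf{Main obstacle.} The hard step is the rigorous identification of distributions, $\phi_Q\widetilde{R}_jf$ versus the kernel operators. Here $\phi_Q\widetilde R_jf$ is defined as $\langle\widetilde R_jf,\phi_Q\phi\rangle$, and $\phi_Q\phi\in\mathcal{S}$. A subtlety is that in the small-time part $T_j(\phi_Q\phi)$ contains reflection terms involving $\phi_Q(\sigma_\alpha\mathbf{x})$, which is nonlocal. I would avoid expanding $T_j(\phi_Q\phi)$ altogether. Instead I work with truncations $\varepsilon>0$, where everything is an absolutely convergent integral by the $L^1\to L^2$ bounds, so the commutator kernel is literally $(\phi_Q(\mathbf{y})-\phi_Q(\mathbf{x}))$ times the truncated kernel. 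I then pass $\varepsilon\to0$ using the uniform $L^1$ kernel bounds above together with dominated convergence.
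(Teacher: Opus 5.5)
Your large-time bracket and your heat-kernel commutator estimate are sound. The small-time part, however, has a genuine gap relative to the hypotheses. Writing $\widetilde{R}_{j,Q,0}=R_j^{d(Q)}-W_{j,Q}$ and bounding $W_{j,Q}(\phi_Qf)$ and $\phi_QW_{j,Q}f$ \emph{separately} requires Lemma~\ref{lem:diff} (and its enlargement \eqref{eq:diff_enlarged}). That proof uses condition~\eqref{eq:K} essentially, but the lemma you are proving assumes only \eqref{eq:D}. This cannot be patched within your route. Since $\int_{\mathbb{R}^N}|T_{j,\mathbf{x}}h_t(\mathbf{x},\mathbf{y})|\,dw(\mathbf{x})$ is of order $t^{-1/2}$, the heat part alone gives $\int_0^{d(Q)^2}t^{-1}\,dt=\infty$. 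A uniform bound for $h_t-k_t$ therefore needs $k_t$ to stay close to $h_t$ for $t\ll d(Q)^2$, which is exactly what \eqref{eq:K} supplies. For instance, for $V\equiv M$ one has $T_{j,\mathbf{x}}(h_t-k_t)=(1-e^{-Mt})T_{j,\mathbf{x}}h_t$, and $T_{j,\mathbf{x}}h_t(\mathbf{x},\mathbf{y})$ has a sign independent of $t$. Hence the $L^1(dw)$ norm of $W_{j,Q}$ on approximate point masses in $Q$ grows like $\log(Md(Q)^2)$, while \eqref{eq:finite_overlap} and \eqref{eq:D} allow coverings with $d(Q)\to\infty$. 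So your argument proves the lemma only under \eqref{eq:D} together with \eqref{eq:K}. That would still suffice for Theorem~\ref{theo:main}, but it is not the stated result. There is also a smaller slip: the cubes $P$ with $\phi_Pf\neq0$ are those with $P^*$ meeting $\bigcup_{Q'\in\mathcal{Q}'(Q)}(Q')^{****}$, and these need not belong to $\mathcal{Q}'(Q)$. By \eqref{eq:finite_overlap} they are still boundedly many with $d(P)\asymp d(Q)$, so this is repairable.

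The missing observation is that no comparison with the heat kernel is needed. The weighted estimate \eqref{eq:weight_schrodinger} holds for $T_{j,\mathbf{x}}k_t$ itself, for every $V\geq0$, because it is derived only from the perturbation formula and Lemma~\ref{lem:better_VK}. Hence the Lipschitz argument you apply to $h_t$ works verbatim for $k_t$. For $t<d(Q)^2$, bound $|\phi_Q(\mathbf{y})-\phi_Q(\mathbf{x})|\leq C\|\mathbf{x}-\mathbf{y}\|/d(Q)$ and use \eqref{eq:weight_schrodinger} with $r=d(Q)$. For $t\geq d(Q)^2$, bound it by $1$ and use \eqref{eq:R_j_infty_absolute}, which holds for almost every $\mathbf{y}\in\bigcup_{Q'\in\mathcal{Q}'(Q)}(Q')^{****}$ under \eqref{eq:D}. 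This gives a uniform bound on $\int_0^\infty\int_{\mathbb{R}^N}|T_{j,\mathbf{x}}k_t(\mathbf{x},\mathbf{y})|\,|\phi_Q(\mathbf{y})-\phi_Q(\mathbf{x})|\,dw(\mathbf{x})\,dt/\sqrt{t}$ on the support of $f$. The identification with the distribution then proceeds exactly as in your last paragraph: truncations, dominated convergence in $L^1(dw)$, and convergence in $\mathcal{S}'$. This is the paper's proof, and it uses neither $R_j^{d(Q)}$ nor $W_{j,Q}$.
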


\begin{proof}
By~\eqref{eq:partition_derivative},
\[
    |\phi_Q(\mathbf{y})-\phi_Q(\mathbf{x})|
    \leq C\frac{\|\mathbf{x}-\mathbf{y}\|}{d(Q)}.
\]
Consequently,~\eqref{eq:weight_schrodinger} with $r=d(Q)$
gives, for almost every $\mathbf{y}\in\mathbb{R}^N$,
\begin{equation}\label{eq:lipschitz}
\begin{aligned}
    &\int_0^{d(Q)^2}\int_{\mathbb{R}^N}
    |T_{j,\mathbf{x}}k_t(\mathbf{x},\mathbf{y})|
    |\phi_Q(\mathbf{y})-\phi_Q(\mathbf{x})|
    \,dw(\mathbf{x})\,\frac{dt}{\sqrt{t}}\\
    &\qquad\leq
    C\int_0^{d(Q)^2}\int_{\mathbb{R}^N}
    \frac{\|\mathbf{x}-\mathbf{y}\|}{d(Q)}
    |T_{j,\mathbf{x}}k_t(\mathbf{x},\mathbf{y})|
    \,dw(\mathbf{x})\,\frac{dt}{\sqrt{t}}
    \leq C.
\end{aligned}
\end{equation}
For almost every
$\mathbf{y}\in\bigcup_{Q'\in\mathcal{Q}'(Q)}(Q')^{****}$,
the proof of Lemma~\ref{lem:R_j_infty} also yields
\[
    \int_{d(Q)^2}^{\infty}\int_{\mathbb{R}^N}
    |T_{j,\mathbf{x}}k_t(\mathbf{x},\mathbf{y})|
    \,dw(\mathbf{x})\,\frac{dt}{\sqrt{t}}
    \leq C.
\]
Since $0\leq\phi_Q\leq1$, combining these estimates gives
\[
    \int_0^\infty\int_{\mathbb{R}^N}
    |T_{j,\mathbf{x}}k_t(\mathbf{x},\mathbf{y})|
    |\phi_Q(\mathbf{y})-\phi_Q(\mathbf{x})|
    \,dw(\mathbf{x})\,\frac{dt}{\sqrt{t}}
    \leq C
\]
for almost every $\mathbf{y}$ in this union.

Thus, by Tonelli's theorem,
\[
    F(\mathbf{x})
    =
    \int_{\mathbb{R}^N}\int_0^\infty
    T_{j,\mathbf{x}}k_t(\mathbf{x},\mathbf{y})
    [\phi_Q(\mathbf{y})-\phi_Q(\mathbf{x})]
    f(\mathbf{y})\,\frac{dt}{\sqrt{t}}\,dw(\mathbf{y})
\]
is absolutely convergent for almost every $\mathbf{x}$,
belongs to $L^1(dw)$, and satisfies
\[
    \|F\|_{L^1(dw)}\leq C\|f\|_{L^1(dw)}.
\]
Moreover, the integral representation of the truncated
operators and dominated convergence imply
\[
    \widetilde{R}_j^\varepsilon(\phi_Qf)
    -\phi_Q\widetilde{R}_j^\varepsilon f
    \longrightarrow F
    \qquad\text{in }L^1(dw).
\]
On the other hand, the left-hand side converges in
$\mathcal{S}'$ to
$\widetilde{R}_j(\phi_Qf)-\phi_Q\widetilde{R}_jf$,
since multiplication by $\phi_Q\in C_c^\infty(\mathbb{R}^N)$
is continuous on $\mathcal{S}'$.
Therefore,
\[
    F=\widetilde{R}_j(\phi_Qf)-\phi_Q\widetilde{R}_jf
\]
in the sense of distributions, which proves the claim.
\end{proof}

\begin{lemma}\label{lem:sum_outside}
Assume that~\eqref{eq:D} holds.
There is a constant $C>0$ such that for every
$f\in L^1(dw)$ and $j\in\{1,\ldots,N\}$,
\begin{equation}\label{eq:sum_outside}
    \sum_{Q\in\mathcal{Q}}
    \left\|
        \chi_{Q^{***}}\widetilde{R}_j
        \Big(
            \sum_{Q''\in\mathcal{Q}''(Q)}\phi_{Q''}f
        \Big)
    \right\|_{L^1(dw)}
    \leq C\|f\|_{L^1(dw)}.
\end{equation}
Here each restricted distribution is represented by an
integrable function on the interior of $Q^{***}$,
and the product with $\chi_{Q^{***}}$ denotes its
extension by zero.
\end{lemma}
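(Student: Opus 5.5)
Fix $j$ and $Q$, and set $g_Q=\sum_{Q''\in\mathcal{Q}''(Q)}\phi_{Q''}f$. Since $\operatorname{supp}\phi_{Q''}\subseteq (Q'')^{*}$ and $(Q'')^{***}\cap Q^{***}=\varnothing$, every $\mathbf{y}\in\operatorname{supp}g_Q$ lies at distance at least $d(Q'')/2$ from $Q^{***}$. By~\eqref{eq:finite_overlap} this distance is also at least comparable to $d(Q)$. The plan is to show that, on the interior of $Q^{***}$, the distribution $\widetilde{R}_jg_Q$ is given by the absolutely convergent integral
\[
    F_Q(\mathbf{x})=\int_{\mathbb{R}^N}\int_0^\infty T_{j,\mathbf{x}}k_t(\mathbf{x},\mathbf{y})\,\frac{dt}{\sqrt t}\,g_Q(\mathbf{y})\,dw(\mathbf{y}),\qquad \mathbf{x}\in Q^{***}.
\]
This identification follows the pattern of Lemmas~\ref{lem:diff} and~\ref{lem:commutator}. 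We test against $\varphi\in C_c^\infty(\operatorname{int}Q^{***})$, show that $\widetilde{R}^\varepsilon_jg_Q\chi_{Q^{***}}\to F_Q\chi_{Q^{***}}$ in $L^1$ by dominated convergence once absolute integrability is known, and use the $\mathcal{S}'$ convergence from Proposition~\ref{prop:Riesz_distributions}.

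The main step is a kernel estimate summed over $Q$. By Tonelli,
\[
    \sum_Q\|\chi_{Q^{***}}F_Q\|_{L^1}\le\int|f(\mathbf{y})|\sum_{Q''}\phi_{Q''}(\mathbf{y})\sum_{Q:\,Q''\in\mathcal{Q}''(Q)}\int_0^\infty\int_{Q^{***}}|T_{j,\mathbf{x}}k_t(\mathbf{x},\mathbf{y})|\,dw(\mathbf{x})\frac{dt}{\sqrt t}\,dw(\mathbf{y}).
\]
Since $\sum\phi_{Q''}=1$, it suffices to show that for a.e.\ $\mathbf{y}\in (Q'')^{*}$ the inner double sum–integral is bounded uniformly. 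The cubes $Q^{***}$ have bounded overlap and all avoid $(Q'')^{***}$, so $\bigcup_Q Q^{***}\subseteq \mathbb{R}^N\setminus (Q'')^{**}$ with bounded multiplicity. The quantity is therefore at most
\[
    C\int_0^\infty\int_{\mathbb{R}^N\setminus(Q'')^{**}}|T_{j,\mathbf{x}}k_t(\mathbf{x},\mathbf{y})|\,dw(\mathbf{x})\frac{dt}{\sqrt t}.
\]
Split the time integral at $d(Q'')^2$. For $t<d(Q'')^2$, Corollary~\ref{coro:outside} applied to $Q''$ with $\mathbf{y}\in(Q'')^*$ gives a uniform bound. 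For $t\ge d(Q'')^2$, drop the spatial restriction and use~\eqref{eq:R_j_infty_absolute} from the proof of Lemma~\ref{lem:R_j_infty} with $Q=Q''$ and $P=Q''\in\mathcal{Q}'(Q'')$, valid for a.e.\ $\mathbf{y}\in(Q'')^{****}$. This is where~\eqref{eq:D} enters.

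This bound also gives the absolute convergence needed in the identification step. For that we need the estimate for each fixed $Q$ with $f$ replaced by $|g_Q|\le|f|$, which the same computation yields because $g_Q$ is a sum of functions of the form $\phi_{Q''}f$.

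I expect the main obstacle to be the bookkeeping of the geometry: checking that $\mathbf{x}\in Q^{***}$ and $\mathbf{y}\in(Q'')^*$ force $\mathbf{x}\notin(Q'')^{**}$, and that the bounded overlap of $\{Q^{***}\}$ lets us exchange the sum over $Q$ for a single integral. The first follows from $Q^{***}\cap(Q'')^{***}=\varnothing$. The second follows from~\eqref{eq:finite_overlap}. The distributional identification near the boundary of $Q^{***}$ is handled by working on the interior only, as in the statement.
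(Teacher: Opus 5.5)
Your proposal is correct and follows essentially the same route as the paper. You interchange the sums over $Q$ and $Q''$ by Tonelli and use that every $Q^{***}$ with $Q''\in\mathcal{Q}''(Q)$ lies in $\mathbb{R}^N\setminus(Q'')^{**}$ with bounded overlap; you bound the resulting kernel integral by Corollary~\ref{coro:outside} for $t<d(Q'')^2$ and by the large-time estimate \eqref{eq:R_j_infty_absolute} from the proof of Lemma~\ref{lem:R_j_infty}, where \eqref{eq:D} enters, and you identify $F_Q$ with the restriction of $\widetilde{R}_jg_Q$ to the interior of $Q^{***}$ via dominated convergence and $\mathcal{S}'$ convergence, exactly as the paper does with its functions $A_P$.
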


\begin{proof}
By Corollary~\ref{coro:outside} and the large-time
estimate established in the proof of
Lemma~\ref{lem:R_j_infty}, for every $P\in\mathcal{Q}$
and almost every $\mathbf{y}\in P^*$,
\[
    \int_0^\infty\int_{\mathbb{R}^N\setminus P^{**}}
    |T_{j,\mathbf{x}}k_t(\mathbf{x},\mathbf{y})|
    \,dw(\mathbf{x})\,\frac{dt}{\sqrt{t}}
    \leq C.
\]
Consequently, the nonnegative function
\[
    A_P(\mathbf{x})
    =
    \int_{P^*}|\phi_P(\mathbf{y})f(\mathbf{y})|
    \int_0^\infty
    |T_{j,\mathbf{x}}k_t(\mathbf{x},\mathbf{y})|
    \,\frac{dt}{\sqrt{t}}\,dw(\mathbf{y})
\]
satisfies
\[
    \int_{\mathbb{R}^N\setminus P^{**}}
    A_P(\mathbf{x})\,dw(\mathbf{x})
    \leq C\|\phi_Pf\|_{L^1(dw)}.
\]

By the definition of $\mathcal{Q}''(Q)$,
\[
    P\in\mathcal{Q}''(Q)
    \quad\Longleftrightarrow\quad
    Q^{***}\cap P^{***}=\emptyset
    \quad\Longleftrightarrow\quad
    Q\in\mathcal{Q}''(P).
\]
Thus, when interchanging the sums over $Q$ and $P$,
the condition $P\in\mathcal{Q}''(Q)$ can be replaced
by $Q\in\mathcal{Q}''(P)$.
Furthermore, since $P^{**}\subseteq P^{***}$,
each $Q\in\mathcal{Q}''(P)$ satisfies
\[
    Q^{***}\subseteq\mathbb{R}^N\setminus P^{**}.
\]
Combining this inclusion with the bounded overlap of
$\{Q^{***}:Q\in\mathcal{Q}\}$, which follows
from~\eqref{eq:finite_overlap}, we obtain
\[
    \sum_{Q\in\mathcal{Q}''(P)}
    \chi_{Q^{***}}(\mathbf{x})
    \leq
    C\chi_{\mathbb{R}^N\setminus P^{**}}(\mathbf{x}).
\]
Therefore, Tonelli's theorem yields
\[
\begin{aligned}
    &\sum_{Q\in\mathcal{Q}}
    \int_{Q^{***}}
    \sum_{P\in\mathcal{Q}''(Q)}
    A_P(\mathbf{x})\,dw(\mathbf{x})=
    \sum_{P\in\mathcal{Q}}
    \int_{\mathbb{R}^N}
    A_P(\mathbf{x})
    \sum_{Q\in\mathcal{Q}''(P)}
    \chi_{Q^{***}}(\mathbf{x})\,dw(\mathbf{x})\\
    &\qquad\leq
    C\sum_{P\in\mathcal{Q}}
    \int_{\mathbb{R}^N\setminus P^{**}}
    A_P(\mathbf{x})\,dw(\mathbf{x})\leq
    C\sum_{P\in\mathcal{Q}}\|\phi_Pf\|_{L^1(dw)}
    =C\|f\|_{L^1(dw)}.
\end{aligned}
\]

For each $Q\in\mathcal{Q}$, set
\[
    g_Q=\sum_{P\in\mathcal{Q}''(Q)}\phi_Pf,
\]
where the series converges in $L^1(dw)$.
The preceding estimate shows that
\[
    F_Q(\mathbf{x})
    =
    \int_{\mathbb{R}^N}\int_0^\infty
    T_{j,\mathbf{x}}k_t(\mathbf{x},\mathbf{y})
    g_Q(\mathbf{y})\,\frac{dt}{\sqrt{t}}\,dw(\mathbf{y})
\]
is absolutely convergent for almost every
$\mathbf{x}\in Q^{***}$ and satisfies
\[
    \sum_{Q\in\mathcal{Q}}
    \|F_Q\|_{L^1(Q^{***},dw)}
    \leq C\|f\|_{L^1(dw)}.
\]
By dominated convergence,
\[
    \widetilde{R}_j^\varepsilon g_Q
    \longrightarrow F_Q
    \qquad\text{in }L^1(Q^{***},dw).
\]
Since
$\widetilde{R}_j^\varepsilon g_Q
\to\widetilde{R}_jg_Q$ in $\mathcal{S}'$,
testing against functions in
$C_c^\infty(\operatorname{int}Q^{***})$
identifies $F_Q$ with the restriction of
$\widetilde{R}_jg_Q$ to this open set.
The desired estimate follows.
\end{proof}

\section{Proof of Theorem\texorpdfstring{~\ref{theo:main}}{1}}

\subsection{Proof of the first inequality}

Let $f\in L^1(dw)$ and assume that
$\widetilde{R}_jf\in L^1(dw)$ for $j=1,\ldots,N$.
All operator identities below are initially understood
in $\mathcal{S}'$.

By~\eqref{eq:W_other_formula},
Lemma~\ref{lem:diff}, and the identity
$\sum_{Q\in\mathcal{Q}}\|\phi_Qf\|_{L^1(dw)}
=\|f\|_{L^1(dw)}$, we have
\begin{equation}\label{eq:pattern_1}
\begin{aligned}
    \sum_{Q\in\mathcal{Q}}
    \|R_j^{d(Q)}(\phi_Qf)\|_{L^1(dw)}
    &\leq
    \sum_{Q\in\mathcal{Q}}
    \|W_{j,Q}(\phi_Qf)\|_{L^1(dw)}
    +
    \sum_{Q\in\mathcal{Q}}
    \|\widetilde{R}_{j,Q,0}(\phi_Qf)\|_{L^1(dw)}\\
    &\leq
    C\|f\|_{L^1(dw)}
    +
    \sum_{Q\in\mathcal{Q}}
    \|\widetilde{R}_{j,Q,0}(\phi_Qf)\|_{L^1(dw)}.
\end{aligned}
\end{equation}

For each $Q\in\mathcal{Q}$, set
\[
    f_Q^{\mathrm{near}}
    =\sum_{P\in\mathcal{Q}'(Q)}\phi_Pf,
    \qquad
    f_Q^{\mathrm{far}}
    =\sum_{P\in\mathcal{Q}''(Q)}\phi_Pf.
\]
These sums converge in $L^1(dw)$ and satisfy
$f=f_Q^{\mathrm{near}}+f_Q^{\mathrm{far}}$.
Since $\phi_Q\phi_P=0$ for
$P\in\mathcal{Q}''(Q)$, we also have
$\phi_Qf=\phi_Qf_Q^{\mathrm{near}}$.
Consequently, in $\mathcal{S}'$ we have
\[
\begin{aligned}
    \widetilde{R}_{j,Q,0}(\phi_Qf)
    &=
    -\widetilde{R}_{j,Q,\infty}(\phi_Qf)\\
    &\quad+
    \bigl[
        \widetilde{R}_j(\phi_Qf_Q^{\mathrm{near}})
        -\phi_Q\widetilde{R}_jf_Q^{\mathrm{near}}
    \bigr]\\
    &\quad-\phi_Q\widetilde{R}_jf_Q^{\mathrm{far}}
    +\phi_Q\widetilde{R}_jf.
\end{aligned}
\]
We estimate these four terms separately.

For the large-time term, Lemma~\ref{lem:R_j_infty}
and $\operatorname{supp}(\phi_Qf)\subseteq Q^*$ give
\[
    \sum_{Q\in\mathcal{Q}}
    \|\widetilde{R}_{j,Q,\infty}(\phi_Qf)\|_{L^1(dw)}
    \leq C\sum_{Q\in\mathcal{Q}}
    \|\phi_Qf\|_{L^1(dw)}
    =C\|f\|_{L^1(dw)}.
\]

For the commutator term, Lemma~\ref{lem:commutator}
yields
\[
\begin{aligned}
    &\sum_{Q\in\mathcal{Q}}
    \bigl\|
        \widetilde{R}_j(\phi_Qf_Q^{\mathrm{near}})
        -\phi_Q\widetilde{R}_jf_Q^{\mathrm{near}}
    \bigr\|_{L^1(dw)}\leq
    C\sum_{Q\in\mathcal{Q}}
    \|f_Q^{\mathrm{near}}\|_{L^1(dw)}
    \leq C\|f\|_{L^1(dw)}.
\end{aligned}
\]
Indeed, condition~\eqref{eq:finite_overlap} implies
$\sup_{P\in\mathcal{Q}}\#\mathcal{Q}'(P)<\infty$.
Using the symmetry
$P\in\mathcal{Q}'(Q)\Longleftrightarrow
Q\in\mathcal{Q}'(P)$ and the nonnegativity of $\phi_P$,
we obtain
\[
    \sum_{Q\in\mathcal{Q}}
    \|f_Q^{\mathrm{near}}\|_{L^1(dw)}
    =
    \sum_{P\in\mathcal{Q}}
    \#\mathcal{Q}'(P)\,\|\phi_Pf\|_{L^1(dw)}
    \leq C\|f\|_{L^1(dw)}.
\]

For the far term, Lemma~\ref{lem:sum_outside}
and $0\leq\phi_Q\leq\chi_{Q^{***}}$ imply
\[
    \sum_{Q\in\mathcal{Q}}
    \|\phi_Q\widetilde{R}_jf_Q^{\mathrm{far}}\|_{L^1(dw)}
    \leq
    \sum_{Q\in\mathcal{Q}}
    \|\chi_{Q^{***}}\widetilde{R}_jf_Q^{\mathrm{far}}
    \|_{L^1(dw)}
    \leq C\|f\|_{L^1(dw)}.
\]

Finally, since $\widetilde{R}_jf\in L^1(dw)$
and $\{\phi_Q\}$ is a nonnegative partition of unity,
\[
    \sum_{Q\in\mathcal{Q}}
    \|\phi_Q\widetilde{R}_jf\|_{L^1(dw)}
    =\|\widetilde{R}_jf\|_{L^1(dw)}.
\]
Thus each of the four terms is represented by an
$L^1(dw)$ function. Combining the preceding estimates
with the triangle inequality yields
\begin{equation}\label{eq:pattern_2}
\begin{aligned}
    \sum_{Q\in\mathcal{Q}}
    \|\widetilde{R}_{j,Q,0}(\phi_Qf)\|_{L^1(dw)}
    &\leq
    C\|f\|_{L^1(dw)}
    +\|\widetilde{R}_jf\|_{L^1(dw)}.
\end{aligned}
\end{equation}

Combining~\eqref{eq:pattern_1},
\eqref{eq:pattern_2}, and
Proposition~\ref{propo:local_Riesz}, we obtain
\begin{equation}\label{eq:sum_local_Hardy}
    \sum_{Q\in\mathcal{Q}}
    \|\phi_Qf\|_{H^1_{\mathrm{loc},d(Q)}}
    \leq
    C\left(
        \|f\|_{L^1(dw)}
        +\sum_{j=1}^N
        \|\widetilde{R}_jf\|_{L^1(dw)}
    \right).
\end{equation}

We next verify the localization needed to pass from
local atoms to $H_{\mathcal{Q}}^{1,\mathrm{at}}$ atoms.
For each $Q$, choose
$\psi_Q\in C_c^\infty(\mathbb{R}^N)$ such that
\[
    0\leq\psi_Q\leq1,
    \qquad
    \psi_Q=1\ \text{on }Q^*,
    \qquad
    \operatorname{supp}\psi_Q\subseteq Q^{**},
    \qquad
    \|\nabla\psi_Q\|_\infty\leq C/d(Q).
\]
We claim that for every
$H^1_{\mathrm{loc},d(Q)}$ atom $a$, the function
$\psi_Qa$ admits a decomposition into
$H_{\mathcal{Q}}^{1,\mathrm{at}}$ atoms associated with
$Q$, with the sum of the absolute values of the
coefficients bounded uniformly.

Suppose that $a$ is supported in $B(\mathbf{x}_0,r)$.
First assume that $r\geq d(Q)/4$.
Cover $Q^{**}$ by a uniformly bounded number of balls
$B_i=B(\mathbf{x}_i,d(Q))$, with
$\mathbf{x}_i\in Q^{**}$, and choose a measurable
partition $\{E_i\}$ of $Q^{**}$ subordinate to this cover.
Each $B_i$ is contained in $Q^{****}$.
If $\mathbf{1}_{E_i}\psi_Qa$ is nonzero, then
$B_i\cap B(\mathbf{x}_0,r)\neq\emptyset$.
Since $d(Q)\leq4r$, the doubling property gives
\[
    w(B_i)\leq Cw(B(\mathbf{x}_0,r)).
\]
It follows that each nonzero
$\mathbf{1}_{E_i}\psi_Qa$ is a multiple of an atom
associated with $Q$, with coefficient at most $C$.
No cancellation is required, because the supporting
balls have radius $d(Q)$.

Now assume that $r<d(Q)/4$ and $\psi_Qa\neq0$.
Set
\[
    m=\int_{\mathbb{R}^N}\psi_Qa\,dw,
    \qquad
    b_\rho
    =\frac{\chi_{B(\mathbf{x}_0,\rho)}}
           {w(B(\mathbf{x}_0,\rho))}.
\]
Since $\int a\,dw=0$,
\[
    |m|
    =
    \left|
        \int_{\mathbb{R}^N}
        [\psi_Q(\mathbf{x})-\psi_Q(\mathbf{x}_0)]
        a(\mathbf{x})\,dw(\mathbf{x})
    \right|
    \leq C\frac{r}{d(Q)}.
\]
Also $|m|\leq1$, so
$\psi_Qa-mb_r$ has integral zero and
$L^\infty$ norm at most
$2w(B(\mathbf{x}_0,r))^{-1}$.

Let $M$ be the smallest integer such that
$2^Mr\geq d(Q)$, and set
\[
    \rho_k=2^kr \quad (0\leq k<M),
    \qquad
    \rho_M=d(Q).
\]
Then
\[
    \psi_Qa
    =
    (\psi_Qa-mb_r)
    +m\sum_{k=0}^{M-1}(b_{\rho_k}-b_{\rho_{k+1}})
    +mb_{d(Q)}.
\]
The first term is a multiple of an atom with
coefficient at most $2$.
Each difference in the sum has integral zero and,
by the doubling property, is a multiple of an atom
supported in $B(\mathbf{x}_0,\rho_{k+1})$,
with a uniformly bounded coefficient.
The final term is a multiple of a noncancellative
atom supported in $B(\mathbf{x}_0,d(Q))$.

All these supporting balls are contained in $Q^{****}$.
Indeed, $B(\mathbf{x}_0,r)$ meets $Q^{**}$,
$r<d(Q)/4$, and their radii do not exceed $d(Q)$.
The sum of the absolute values of the coefficients
is bounded by
\[
    C+C|m|(M+1)
    \leq
    C+C\frac{r}{d(Q)}
    \left(1+\log\frac{d(Q)}r\right)
    \leq C.
\]
This proves the claim.

For each $Q\in\mathcal{Q}$ with $\phi_Qf\neq0$, choose
an atomic decomposition
\[
    \phi_Qf=\sum_{\nu=1}^{\infty}\lambda_{\nu,Q}b_{\nu,Q}
    \qquad\text{in }L^1(dw),
\]
where $b_{\nu,Q}$ are $H^1_{\mathrm{loc},d(Q)}$ atoms and
\[
    \sum_{\nu=1}^{\infty}|\lambda_{\nu,Q}|
    \leq 2\|\phi_Qf\|_{H^1_{\mathrm{loc},d(Q)}}.
\]
If $\phi_Qf=0$, we use the empty decomposition.
Since multiplication by $\psi_Q$ is bounded on $L^1(dw)$
and $\psi_Q\phi_Qf=\phi_Qf$, we have
\[
    \phi_Qf
    =\sum_{\nu=1}^{\infty}
    \lambda_{\nu,Q}\psi_Qb_{\nu,Q}
    \qquad\text{in }L^1(dw).
\]

By the localization argument above, for each $\nu$
there is a finite decomposition
\[
    \psi_Qb_{\nu,Q}
    =
    \sum_{k=1}^{M_{\nu,Q}}
    \mu_{\nu,k,Q}a_{\nu,k,Q},
    \qquad
    \sum_{k=1}^{M_{\nu,Q}}|\mu_{\nu,k,Q}|\leq C,
\]
where each $a_{\nu,k,Q}$ is an
$H_{\mathcal{Q}}^{1,\mathrm{at}}$ atom associated with $Q$.
The constant $C$ is independent of $\nu$ and $Q$.
Consequently,
\[
    \phi_Qf
    =
    \sum_{\nu=1}^{\infty}
    \sum_{k=1}^{M_{\nu,Q}}
    c_{\nu,k,Q}a_{\nu,k,Q},
    \qquad
    c_{\nu,k,Q}
    =\lambda_{\nu,Q}\mu_{\nu,k,Q}.
\]
These coefficients satisfy
\[
\begin{aligned}
    \sum_{\nu=1}^{\infty}
    \sum_{k=1}^{M_{\nu,Q}}|c_{\nu,k,Q}|
    &=
    \sum_{\nu=1}^{\infty}|\lambda_{\nu,Q}|
    \sum_{k=1}^{M_{\nu,Q}}|\mu_{\nu,k,Q}|\leq
    C\sum_{\nu=1}^{\infty}|\lambda_{\nu,Q}|\leq
    2C\|\phi_Qf\|_{H^1_{\mathrm{loc},d(Q)}}.
\end{aligned}
\]
Since every atom has $L^1(dw)$ norm at most one,
this double series converges absolutely in $L^1(dw)$.

Summing over $Q$ and using~\eqref{eq:sum_local_Hardy},
we obtain
\[
\begin{aligned}
    \sum_{Q\in\mathcal{Q}}
    \sum_{\nu=1}^{\infty}
    \sum_{k=1}^{M_{\nu,Q}}|c_{\nu,k,Q}|
    &\leq
    C\sum_{Q\in\mathcal{Q}}
    \|\phi_Qf\|_{H^1_{\mathrm{loc},d(Q)}}\leq
    C\left(
        \|f\|_{L^1(dw)}
        +\sum_{j=1}^N
        \|\widetilde{R}_jf\|_{L^1(dw)}
    \right).
\end{aligned}
\]
Thus the resulting series is absolutely convergent
in $L^1(dw)$ and, since $f=\sum_Q\phi_Qf$ in $L^1(dw)$,
represents $f$:
\[
    f=
    \sum_{Q\in\mathcal{Q}}
    \sum_{\nu=1}^{\infty}
    \sum_{k=1}^{M_{\nu,Q}}
    c_{\nu,k,Q}a_{\nu,k,Q}.
\]
By the definition of the atomic norm,
\[
    \|f\|_{H_{\mathcal{Q}}^{1,\mathrm{at}}}
    \leq
    C\left(
        \|f\|_{L^1(dw)}
        +\sum_{j=1}^N
        \|\widetilde{R}_jf\|_{L^1(dw)}
    \right).
\]
Theorem~\ref{theorem:main_prev} now gives the first
inequality.

\subsection{Proof of the second inequality}

We first extend the estimate in Lemma~\ref{lem:diff}
to functions supported in $Q^{****}$.
We claim that
\begin{equation}\label{eq:diff_enlarged}
    \operatorname*{ess\,sup}_{\mathbf{y}\in Q^{****}}
    \int_0^{d(Q)^2}\int_{\mathbb{R}^N}
    |T_{j,\mathbf{x}}(h_t-k_t)(\mathbf{x},\mathbf{y})|
    \,dw(\mathbf{x})\,\frac{dt}{\sqrt{t}}
    \leq C.
\end{equation}

To prove this, choose $P\in\mathcal{Q}$ containing
$\mathbf{y}\in Q^{****}$.
Then $\mathbf{y}\in P^*$ and
$P^{****}\cap Q^{****}\neq\emptyset$.
Hence~\eqref{eq:finite_overlap} gives
$d(P)\sim d(Q)$.
For almost every such $\mathbf{y}$,
\[
\begin{aligned}
    &\int_0^{d(Q)^2}\int_{\mathbb{R}^N}
    |T_{j,\mathbf{x}}(h_t-k_t)(\mathbf{x},\mathbf{y})|
    \,dw(\mathbf{x})\,\frac{dt}{\sqrt{t}}\\
    &\quad\leq
    \int_0^{d(P)^2}\int_{\mathbb{R}^N}
    |T_{j,\mathbf{x}}(h_t-k_t)(\mathbf{x},\mathbf{y})|
    \,dw(\mathbf{x})\,\frac{dt}{\sqrt{t}}\\
    &\qquad+
    \int_{\min(d(P)^2,d(Q)^2)}^{\max(d(P)^2,d(Q)^2)}
    \int_{\mathbb{R}^N}
    \left(
        |T_{j,\mathbf{x}}h_t(\mathbf{x},\mathbf{y})|
        +|T_{j,\mathbf{x}}k_t(\mathbf{x},\mathbf{y})|
    \right)
    \,dw(\mathbf{x})\,\frac{dt}{\sqrt{t}}.
\end{aligned}
\]
The first term is bounded by Lemma~\ref{lem:diff}.
The second is uniformly bounded by
\eqref{eq:heat_derivative_moments} and
Corollary~\ref{coro:logarithm}, since its interval of
integration can be covered by a uniformly bounded
number of intervals of the form $[u,2u]$.
As $\mathcal{Q}$ is countable, this proves
\eqref{eq:diff_enlarged}.

By Tonelli's theorem and dominated convergence,
\eqref{eq:diff_enlarged} implies that for every
$g\in L^1(dw)$ supported in $Q^{****}$,
the truncated difference operators converge in
$L^1(dw)$ and
\[
    \|W_{j,Q}g\|_{L^1(dw)}
    \leq C\|g\|_{L^1(dw)}.
\]
The limit is identified with $W_{j,Q}g$ by its
distributional definition, exactly as in the proof
of Lemma~\ref{lem:diff}.

Let $a$ be an $H_{\mathcal{Q}}^{1,\mathrm{at}}$ atom
associated with $Q$.
By Definition~\ref{def:atomic},
$\operatorname{supp}a\subseteq Q^{****}$ and
$\|a\|_{L^1(dw)}\leq1$.
Moreover, $a$ is also an
$H^1_{\mathrm{loc},d(Q)}$ atom, so
Proposition~\ref{propo:local_Riesz} gives
\[
    \|R_j^{d(Q)}a\|_{L^1(dw)}\leq C.
\]
Since $Q\in\mathcal{Q}'(Q)$,
Lemma~\ref{lem:R_j_infty} also yields
\[
    \|\widetilde{R}_{j,Q,\infty}a\|_{L^1(dw)}
    \leq C.
\]
Using the distributional identity
\[
    \widetilde{R}_ja
    =
    \widetilde{R}_{j,Q,\infty}a
    +R_j^{d(Q)}a-W_{j,Q}a,
\]
we conclude that $\widetilde{R}_ja$ is represented
by a function in $L^1(dw)$ and
\begin{equation}\label{eq:uniform_on_atoms}
    \|\widetilde{R}_ja\|_{L^1(dw)}
    \leq C.
\end{equation}

Finally, let $f\in H_L^1$.
By Theorem~\ref{theorem:main_prev}, there is an
atomic decomposition
\[
    f=\sum_{m=1}^{\infty}c_ma_m
    \qquad\text{in }L^1(dw),
    \qquad
    \sum_{m=1}^{\infty}|c_m|
    \leq C\|f\|_{H_L^1}.
\]
For $M\geq1$, set
\[
    f_M=\sum_{m=1}^{M}c_ma_m,
    \qquad
    G_{j,M}=\sum_{m=1}^{M}c_m\widetilde{R}_ja_m.
\]
By linearity for finite sums,
$G_{j,M}=\widetilde{R}_jf_M$ in $\mathcal{S}'$.
The uniform estimate on atoms~\eqref{eq:uniform_on_atoms} gives, for $M'>M$,
\[
    \|G_{j,M'}-G_{j,M}\|_{L^1(dw)}
    \leq C\sum_{m=M+1}^{M'}|c_m|.
\]
Thus $\{G_{j,M}\}_{M\geq1}$ is a Cauchy sequence
in $L^1(dw)$. Denote its limit by $G_j$.
Then
\[
    \|G_j\|_{L^1(dw)}
    \leq C\sum_{m=1}^{\infty}|c_m|.
\]

We now identify $G_j$ with the distribution
$\widetilde{R}_jf$.
This step uses only~\eqref{eq:distribution},
which was established for arbitrary $L^1(dw)$
functions independently of the present theorem.
For every $\varphi\in\mathcal{S}(\mathbb{R}^N)$,
\[
\begin{aligned}
    \left|
        \langle\widetilde{R}_jf,\varphi\rangle
        -\int_{\mathbb{R}^N}G_j(\mathbf{x})
            \varphi(\mathbf{x})\,dw(\mathbf{x})
    \right|
    &\leq
    \left|
        \langle\widetilde{R}_j(f-f_M),\varphi\rangle
    \right|\\
    &\quad+
    \left|
        \int_{\mathbb{R}^N}
        (G_{j,M}-G_j)(\mathbf{x})
        \varphi(\mathbf{x})\,dw(\mathbf{x})
    \right|\\
    &\leq
    C\|f-f_M\|_{L^1(dw)}
    \left(
        \|\varphi\|_{L^2(dw)}
        +\|T_j\varphi\|_{L^\infty}
    \right)\\
    &\quad+
    \|G_{j,M}-G_j\|_{L^1(dw)}
    \|\varphi\|_{L^\infty}.
\end{aligned}
\]
Both terms tend to zero as $M\to\infty$.
Indeed, $G_{j,M}\to G_j$ in $L^1(dw)$, and
\[
    \|f-f_M\|_{L^1(dw)}
    \leq\sum_{m>M}|c_m|\longrightarrow0.
\]
Therefore $\widetilde{R}_jf$ is represented by $G_j$,
and hence
\[
    \|\widetilde{R}_jf\|_{L^1(dw)}
    =\|G_j\|_{L^1(dw)}
    \leq C\sum_{m=1}^{\infty}|c_m|
    \leq C\|f\|_{H_L^1}.
\]
Finally, since $\|a_m\|_{L^1(dw)}\leq1$,
\[
    \|f\|_{L^1(dw)}
    \leq\sum_{m=1}^{\infty}|c_m|
    \leq C\|f\|_{H_L^1}.
\]
Summing over $j=1,\ldots,N$, we obtain
\[
    \|f\|_{L^1(dw)}
    +\sum_{j=1}^N\|\widetilde{R}_jf\|_{L^1(dw)}
    \leq C\|f\|_{H_L^1}.
\]
This completes the proof of Theorem~\ref{theo:main}.

\end{document}